\documentclass[a4paper,11pt]{article}
\usepackage{amssymb}
\usepackage{cite}
\usepackage{color}
\usepackage{amsmath}

\newcommand{\cL}{{\cal L}}

\newcommand{\R}{\mathbb{R}}
\newcommand{\Z}{\mathbb{Z}}
\newcommand{\N}{\mathbb{N}}

\newcommand{\cO}{{\cal O}}

\newcommand{\cuad}{{\sqcap\kern-.68em\sqcup}}

\newcommand{\bH}{{\mathbb{H} }}
\newcommand{\cJ}{{\cal J}}
\newcommand{\cH}{{\cal H}}
\newcommand{\cW}{{\cal W}}

\newtheorem{theorem}{Theorem}[section]
\newtheorem{proposition}{Proposition}[section]

\newtheorem{lemma}{Lemma}[section]
\newtheorem{corollary}{Corollary}[section]
\newtheorem{remark}{Remark}[section]
\newcommand{\bremark}{\begin{remark} \em}
	\newcommand{\eremark}{\end{remark} }

\begin{document}
	
	\begin{center}{\bf \large     Poisson problem  with Hardy operator  on lattice graph $\Z^d$\\[1.5mm]
			and the application to semilinear equations

		}\bigskip 
		\bigskip
		
		{\small
			Huyuan Chen\footnote{chenhuyuan@simis,cn, chenhuyuan@yeah.net} \qquad Bobo Hua\footnote{bobohua@fudan.edu.cn}  \qquad
			Wendi Xu\footnote{xuwendi@simis.cn} 
			\bigskip\medskip
		 
		\medskip

		$ ^{1}$   Center for Mathematics and Interdisciplinary Sciences, Fudan University, \\ 
		Shanghai 200433, China\\[2pt]
		Shanghai Institute for Mathematics and Interdisciplinary Sciences,\\ 
		Shanghai 200433, China\\[14pt] 
		$ ^2$ School of Mathematical Sciences, LMNS, Fudan University,\\
		Shanghai, 200433, P.R. China\\[2pt]
		Shanghai Center for Mathematical Sciences, Fudan University,\\ Shanghai 200438, China
		\\[14pt]
		$ ^{3}$ Shanghai Institute for Mathematics and Interdisciplinary Sciences,\\ 
		Shanghai 200433, China\\[2pt]
		Research Institute of Intelligent Complex Systems, Fudan University, \\
		Shanghai 200433, China\\[18pt]
		}
		\begin{abstract}

			In this paper, we investigate the analytic properties of the Hardy-type operator $-\Delta + \mu H_0$ on the $d$-dimensional integer lattice graph $\mathbb{Z}^d$, where $\mu \geq -1$ and $H_0$ is a critical Hardy potential at infinity--arising naturally from discrete Hardy inequalities. We derive sharp fixed-pole Green kernel estimates on $\mathbb{Z}^d$ and obtain a necessary and sufficient weighted summability condition characterizing the solvability of the nonnegative Poisson problem associated with this operator. 
			
			Building upon this classification, we provide a complete characterization of the nonexistence of positive solutions to the semilinear elliptic inequality  
			$$
			-\Delta u + \mu H_0 u \geq W u^p \quad \text{in } \mathbb{Z}^d,
			$$  
			where $p > 0$, $d \geq 3$, $\mu \geq -1$, and $H_0, W \in C(\mathbb{Z}^d)$ are strictly positive potentials satisfying the asymptotic behaviors $H_0(x) \sim |x|^{-2}$ and $W(x) \sim |x|^{\theta}$ as $|x| \to+ \infty$, with $\theta \in \mathbb{R}$.
			
		\end{abstract}
		
	\end{center}
	\noindent {\small {\bf Keywords}: Hardy operator;   Poisson problem;   Liouville Theorem; Lattice Graphs.  }
	\smallskip
	
		\smallskip

		\vspace{2mm}

		\setcounter{equation}{0}
		\section{Introduction}
		
		Let $(\Z^d, \omega,\nu)$ with $d\geq 3$ be the \textit{d-dimensional integer lattice graph}   consisting of the set of vertices $\mathbb{Z}^d$, where the edge weight $\omega:\Z^d\times \Z^d\to [0,+\infty)$ is defined by
		\begin{equation*}
			\omega_{xy}= \begin{cases}
				1\ \,\, \text{ if } \displaystyle |x-y|_{_Q}:=  \sum^d_{k=1}|x_k-y_k|=1,\\
				0\ \,\, \text{ otherwise,} 
			\end{cases}
		\end{equation*}
		and the counting measure $\nu(x)\equiv 1.$ Every vertex has degree $2d$.
		Throughout the paper we use the unnormalized Laplacian
		$$\Delta u(x)=\sum_{y\sim x}\big(u(y)-u(x)\big)\quad\ \text{ for all }x\in\mathbb{Z}^d.$$

		On the lattice space, the Hardy-type inequality  from \cite[Theorem 7.1]{KPP} (also see \cite{KPP1}) has 
		the form that  
		\begin{equation}\label{Htineq}
			\frac{1}{2}\sum_{x\in\Z^d}  \sum_{y\sim x} \big( \varphi(x)-\varphi(y)\big)^2 \geq \sum_{z\in\Z^d}  \bar{H}_0(z)\varphi(z)^2
		\end{equation}
		for all finitely supported function $\varphi$ on $\Z^d$, where 
		\begin{equation}\label{Green 1}
			\overline{H}_0(x) =-\big[\Delta\Phi_{d}^{\frac12}(x)\big]\Phi_{d}^{-\frac12}(x)  >0,
		\end{equation}
		where   $\Phi_{d}$ is the fundamental solution of $-\Delta$ in $\Z^d$, 
		i.e.
		$$
		\left\{\arraycolsep=1pt
		\begin{array}{lll}
			\ \ -\Delta \Phi_{d} =\delta_0\quad
			{\rm in}\ \  \Z^d, \\[2mm]
			\phantom{   }
			\displaystyle \lim_{|x|\to+\infty}\Phi_{d}(x)=0. 
		\end{array}
		\right.
		$$
		Furthermore, we have that 
		\begin{equation}\label{ome-1}
			\overline{H}_0(x)=\frac{(d-2)^2}{4}|x|^{-2}+O(|x|^{-3})\quad {\rm as}\ \, |x|\to+\infty. 
		\end{equation}
		A survey on Hardy-type inequalities on graphs could refer to \cite{Ff,HY24,KL16}
		and the reference therein. \smallskip
		Motivated by the Hardy inequality,    the Hardy operator in Lattice graph has the following form
		\begin{equation}\label{Hardy-1-1}
			\cL_\mu u=-\Delta u+\mu H_0 u\quad {\rm in}\ \, \Z^d, 
		\end{equation}
		where $\mu\in\R$,   function $H_0\in C(\Z^d)$ is a Hardy potential satisfying  one of the following properties   
		\begin{itemize}
			\item[ $(\cH_0)$]      
			\begin{equation}\label{con H-0} 
				0\leq  H_0(x)\leq \overline{H}_0(x)\ \ {\rm in}\ \, \Z^d\qquad {\rm and}\qquad \lim_{x\in\Z^d,\, |x|\to+\infty} \frac{ H_0(x) }{\overline{H}_0(x)}=1;  
			\end{equation}
			
			\item[ $(\cH_1)$]     
			\begin{equation}\label{con H-1} 
				0\leq  H_0(x)\leq \overline{H}_0(x)\quad {\rm in}\ \, \Z^d
			\end{equation}
			and  for some $\varepsilon_0\in(0,1)$,  
			\begin{equation}\label{con H-11}  
				\limsup_{x\in\Z^d,\, |x|\to+\infty} \frac{\big(\overline{H}_0(x)-H_0(x)\big)|x|^{\varepsilon_0}}{\overline{H}_0(x)} \in [0,+\infty).  
			\end{equation}
		\end{itemize}

		Note that condition $(\cH_1)$ is equivalent to the existence of a constant $C>0$ such that  
$$
\big(\overline{H}_0(x) - C\,\overline{H}_0(x)\,(1+|x|)^{-\varepsilon_0}\big)_+ \leq H_0(x) \leq \overline{H}_0(x) \quad \text{for all } x \in \Z^d,
$$  
which is strictly stronger than $(\cH_0)$. Under this strengthened assumption, we shall obtain sharp decay estimates for the fundamental solutions.

		Our first purpose is to show the fundamental solution  of  the Hardy operator, that is, the solution of 
		\begin{equation}\label{eq 5.1-fw1}
			\left\{\arraycolsep=1pt
			\begin{array}{lll}
				\cL_\mu  u =\delta_{\xi} \quad
				&{\rm in}\ \  \Z^d, \\[2mm]
				\phantom{-\   }
				\displaystyle u\geq 0\quad
				&{\rm in}\ \  \Z^d
			\end{array}
			\right.
		\end{equation}
		for  some  $\xi\in\Z^d$.    Before stating the fundamental solution, we propose two important values
		\begin{equation}\label{fun exp1}
			\tau_\pm=\tau_\pm(\mu):=\frac{2-d}{2}(1\mp \sqrt{1+\mu})\quad {\rm for}\ \mu\geq -1.   
		\end{equation}

		\begin{theorem}\label{pr fun-fw}
			Let $d\geq 3$ and $\xi\in \Z^d$. Assume that one of the following conditions is satisfied:
			$$(a)\quad \mu > -1,\ \text{ $H_0 \in C(\Z^d)$ verifies  $(\cH_0)$};$$ 
			$$(b)\quad \mu \geq -1,\  \text{ $H_0 \in C(\Z^d)$ verifies  $(\cH_1)$}, \quad {\rm and}\quad H_0<\overline{H}_0\quad{\rm in}\ \Z^d\ \,{\rm for}\ \mu=-1. $$    
			Then problem (\ref{eq 5.1-fw1}) has a minimal positive Green kernel $\Phi_{d,\mu}(\cdot,\xi)$. It is unique among positive solutions satisfying the corresponding upper bound below.
			
			Moreover, under assumption $(a)$, for every $0<\theta<\tau_+ - \tau_-$, there exists a constant $C_\theta \geq 1$ such that, for sufficiently large $|x-\xi|$,
			\begin{equation}\label{2.1-E-bf1}
				C_{\theta}^{-1}|x-\xi|^{\tau_- -\theta} \leq \Phi_{d,\mu}(x,\xi)\leq  C_{\theta}|x-\xi|^{\tau_-+\theta}.
			\end{equation}
			Under assumption $(b)$, there exists a constant $C\geq 1$ such that, for sufficiently large $|x-\xi|$,
			\begin{equation}\label{2.1-E-bf}
				C^{-1}|x-\xi|^{\tau_-} \leq \Phi_{d,\mu}(x,\xi)\leq  C|x-\xi|^{\tau_-}.
			\end{equation}
			
			Furthermore, if $\mu\geq0$, then $\Phi_{d,\mu}$ satisfies the standard Green function upper bound:
			\begin{equation}\label{2.1-F-bf}
				\Phi_{d,\mu}(x,\xi)\leq  C (1+ |x-\xi|)^{2-d} \quad {\rm for}\ x\in\Z^d,
			\end{equation}
			where $C>0$ is independent of $x$ and $\xi$.
		\end{theorem}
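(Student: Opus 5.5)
We construct $\Phi_{d,\mu}(\cdot,\xi)$ as the increasing limit of Dirichlet Green functions on balls, and obtain the asymptotics by comparison with explicit barriers of the form $\Phi_d^{s}$ and $|x|^{\tau}$.

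\emph{Step 1: existence and minimality.} Let $B_R=\{x\in\Z^d: |x-\xi|< R\}$. Solve $\cL_\mu u_R=\delta_\xi$ in $B_R$ with $u_R=0$ outside $B_R$. This finite linear system is uniquely solvable, and $u_R>0$ in $B_R$, provided the quadratic form
$$Q_\mu(\varphi)=\tfrac12\sum_{x}\sum_{y\sim x}(\varphi(x)-\varphi(y))^2+\mu\sum H_0\varphi^2$$
is positive definite on $B_R$. For $\mu\ge0$ this is trivial. For $\mu\in[-1,0)$ it follows from (\ref{Htineq}) together with $H_0\le \overline H_0$. Strictness holds even at $\mu=-1$: equality in (\ref{Htineq}) would force $\varphi$ to be a multiple of $\Phi_d^{1/2}$, which is not finitely supported. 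In case (b) with $\mu=-1$ we can also use $H_0<\overline H_0$.

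The maximum principle then gives $u_R\uparrow$ in $R$. Once a positive supersolution $w$ with $\cL_\mu w\ge\delta_\xi$ is available, comparison gives $u_R\le w$. Hence $u_R\to\Phi_{d,\mu}$, which solves (\ref{eq 5.1-fw1}), and $\Phi_{d,\mu}$ is minimal because any positive solution dominates every $u_R$.

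For uniqueness within the class satisfying the upper bound, take $v$ such a solution. The difference $h=v-\Phi_{d,\mu}\ge0$ is $\cL_\mu$-harmonic and $o(1)$-bounded by $|x|^{\tau_-+\theta}$. A positive $\cL_\mu$-harmonic function on $\Z^d$ must grow at least like $|x|^{\tau_+-\theta}$; this follows by comparing on annuli with the subsolution built in Step 2. Since $\tau_-+\theta<\tau_+-\theta$ for small $\theta$, we get $h\equiv0$.

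\emph{Step 2: barriers.} The key computation is that for $V=\Phi_d^{s}$,
$$-\Delta V=\big(c_s+o(1)\big)|x|^{-2}V,\qquad c_s=(d-2)^2s(1-s).$$
This uses the asymptotics $\Phi_d(x)=c_d|x|^{2-d}(1+O(|x|^{-2}))$ and a Taylor expansion of $\Phi_d^s$ along lattice directions, consistent with (\ref{ome-1}) at $s=\tfrac12$. Setting $\tau=(2-d)s$, the exponent $\tau$ is $\cL_\mu$-harmonic at leading order exactly when $(d-2)^2s(1-s)+\mu\frac{(d-2)^2}{4}=0$, that is, when $\tau=\tau_\pm$.

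In case (a), the error $H_0-\overline H_0=o(|x|^{-2})$. Taking $\tau=\tau_-\pm\theta$, the leading coefficient has a strict sign, which makes $\Phi_d^{s}$ a super- or subsolution outside a large ball. Adding a multiple of $\Phi_d$ near $\xi$ and comparing on $B_R\setminus B_{R_0}$ then yields (\ref{2.1-E-bf1}). Finite-ball positivity lets us adjust constants on $B_{R_0}$.

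In case (b), the error is $O(|x|^{-2-\varepsilon_0})$. Here we use the corrected barriers $|x|^{\tau_-}(1\pm C|x|^{-\varepsilon})$ with $\varepsilon<\min(\varepsilon_0,\tau_+-\tau_-)$, and the correction term absorbs the error. When $\mu=-1$ we have $\tau_+=\tau_-$, so this is degenerate, and we instead use $\Phi_d^{1/2}(1\pm C|x|^{-\varepsilon})$, exploiting $H_0\le\overline H_0$ directly: $\Phi_d^{1/2}$ is then an exact supersolution. This gives (\ref{2.1-E-bf}).

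\emph{Step 3: $\mu\ge0$.} Since $\mu H_0\ge0$, the function $\Phi_d(\cdot-\xi)$ is a supersolution, so $\Phi_{d,\mu}\le\Phi_d(\cdot-\xi)\le C(1+|x-\xi|)^{2-d}$ with $C$ independent of $\xi$.

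The main obstacle is Step 2: the discrete Taylor expansion of $\Phi_d^{s}$ and of $|x|^{\tau}(1\pm C|x|^{-\varepsilon})$ must be carried out with uniform error $O(|x|^{-3})$. The delicate part is the critical case $\mu=-1$ under $(\cH_1)$, where matching upper and lower bounds without a logarithmic loss relies on the exact identity (\ref{Green 1}) together with the decay rate $\varepsilon_0$.
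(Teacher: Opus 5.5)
Your Step 1 and the barrier calculus for $\mu>-1$ are workable, and the ground-state barriers $\Phi_d^{s}$ are a reasonable alternative to the paper's $(1+|x|^2)^{\sigma/2}$. In particular, $\cL_\mu\Phi_d^{1/2}=(\overline H_0+\mu H_0)\Phi_d^{1/2}\ge0$ for every $\mu\ge-1$. So a multiple of $\Phi_d^{1/2}$ is the global supersolution $w$ that Step 1 needs: it replaces the paper's bound (\ref{2.1-b-h1-1}) and gives the upper bound at $\mu=-1$. State this explicitly, because ``adding a multiple of $\Phi_d$'' does not produce a supersolution when $\mu<0$: there $\cL_\mu\Phi_d(\cdot-\xi)=\delta_\xi+\mu H_0\Phi_d(\cdot-\xi)<0$ away from $\xi$.

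The genuine gap is the critical case $\mu=-1$, first in the lower bound. A positive subsolution such as $\Phi_d^{1/2}(1+C|x|^{-\varepsilon})$ cannot be compared with $u_R$ on $B_R\setminus B_{R_0}$, because $u_R=0$ on $\partial B_R$ while the barrier is positive there. Comparing directly with $\Phi_{d,-1}$ on an exterior domain needs a maximum principle at infinity. Corollary \ref{cr comp} does not help, since verifying its condition at infinity already requires a lower bound on $\Phi_{d,-1}$.

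The standard remedy is to subtract $\varepsilon\rho$, where $\rho>0$ is an exterior supersolution with $\rho/|x|^{\tau_-}\to\infty$. The subsolution then becomes negative near infinity, so you can compare on large balls and let $\varepsilon\to0$. For $\mu>-1$ you may take $\rho=\Phi_d^{s}$ with $(2-d)s\in(\tau_-,\tau_+)$. At $\mu=-1$ this interval is empty and no power works; you need a logarithmic correction. For example, $\rho=|x|^{\tau_-}(\log|x|)^{\gamma}$ with $0<\gamma<1$ satisfies $\cL_{-1}\rho=\gamma(1-\gamma)|x|^{\tau_--2}(\log|x|)^{\gamma-2}(1+o(1))>0$, with the $O(|x|^{\tau_--2-\varepsilon_0}(\log|x|)^\gamma)$ errors of lower order. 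This is the paper's function $\chi$.

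The same degeneracy breaks your uniqueness argument. It rests on $\tau_-+\theta<\tau_+-\theta$, which fails when $\tau_+=\tau_-$; moreover, in case (b) the admissible class is $O(|x|^{\tau_-})$ anyway. What must be shown is that a nonnegative entire $\cL_{-1}$-harmonic $h=O(|x|^{\tau_-})$ vanishes, and this again uses the logarithmic supersolution. Take $\rho$ as above, made positive on all of $\Z^d$ (e.g. $\psi_{\tau_-}(\log(e+|x|^2))^\gamma$), and a finite set $F$ outside which $\cL_{-1}\rho\ge0$. For $M$ large, $\rho+M\sum_{z\in F}\Phi_{d,-1}(\cdot,z)$ is a positive global supersolution with $h/\rho\to0$. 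Comparison on large balls then gives $h\le\varepsilon\rho$ for every $\varepsilon>0$, as in the paper.

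Without this ingredient your plan proves only the upper half of (\ref{2.1-E-bf}) at $\mu=-1$, and no uniqueness there. Your closing remark locates the difficulty correctly but does not supply the mechanism that resolves it.
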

		
		A minimal solution means one which is pointwise not larger than any other
		positive solution.  We impose no boundary condition at infinity unless
		explicitly stated.  In fact,   $\mu\geq 0$, the solution of problem (\ref{eq 5.1-fw1}) is unique
		under the boundary condition $\displaystyle\lim_{x\in\Z^d,|x|\to+\infty} u(x)=0$, by applying the comparison principle. 
		Under assumption $(\cH_0)$, the fundamental solution for $\cL_\mu$ exists when $\mu > -1$ and $H_0 = \overline{H}_0$. However, the existence remains open in the extremal case $\mu = -1$ and  $H_0 = \overline{H}_0$.  In this critical regime $\mu = -1$, a fundamental solution can be established only under a strengthened version of assumption $(\cH_1)$—one that accommodates a remainder term in the Hardy inequality via a refined modification of the Hardy potential in the optimal Hardy inequality (see \cite{KPP,K}).

		In $\R^d$, the Hardy operator takes the form  $
		\cL_{\R^d,\mu} := -\Delta_{\R^d} + \frac{(d-2)^2}{4}\frac{\mu}{|x|^2},$  
		where the inverse-square potential is critical with respect to the Laplacian both at the origin and at infinity. This criticality underpins the operator’s central role in both mathematical analysis and mathematical physics. Hardy–Leray-type problems governed by such operators arise naturally in diverse contexts: molecular physics \cite{LL}, quantum cosmological models—including the Wheeler–DeWitt equation \cite{BE}—and combustion theory \cite{G}. In contrast, on the lattice $\Z^d$, the Hardy potential in $\cL_\mu$ is critical only at infinity, due to the discrete geometry of $\Z^d$, which eliminates the singularity at the origin. Consequently, asymptotic behavior at infinity determines the decay profile of fundamental solutions: for $\mu > -1$, the fundamental solution decays like $|x|^{\tau_-(\mu)}$ both on $\R^d$ and $\Z^d$; however, in the extremal case $\mu = -1$, the decay differs qualitatively—on $\R^d$, it exhibits a logarithmic correction, taking the form $|x|^{\tau_-}(-\ln|x|)$, whereas on $\Z^d$, it decays purely as $|x|^{\tau_-}$. Furthermore, on $\R^d$, the operator $-\Delta - \frac{(d-2)^2}{4} \mu|x|^{-2}$ admits a weakly singular fundamental solution $\Gamma_\mu(x) = |x|^{\tau_+(\mu)}$. Whether an analogous weakly singular solution exists for $\cL_\mu$ on $\Z^d$ remains open for $\mu\in[-1,+\infty)\setminus\{0\}$.

		\medskip

		Next we classify the solutions of Poisson problem involving the Hardy operator
		\begin{equation}\label{eq 3.1-p}
			\left\{\arraycolsep=1pt
			\begin{array}{lll}
				\cL_\mu  u    =  g   \quad
				&{\rm in}\ \  \Z^d , \\[2mm]
				\phantom{  -\ }
				\displaystyle  u    \geq 0 &{\rm in}\ \  \Z^d, 
			\end{array}
			\right.
		\end{equation}
		where  $g: \Z^d\to[0,+\infty)$. 
		Our main results on the existence states as following.

		\begin{theorem}\label{teo 1-p}
			Assume that $d\geq 3$,    $ \mu \geq -1$,  $H_0 \in C(\Z^d)$ verifies  $(\cH_1)$
			and  $g\in C(\Z^d)$ is a nonnegative  functions.\\
			$(a)$ If $\mu \geq -1$, 
				\begin{equation}\label{con ext} 
				H_0<\overline{H}_0\quad{\rm in}\ \Z^d\ \,{\rm if}\ \mu=-1,
				\end{equation}
				and $g$ satisfies
			\begin{equation}\label{con 2.1-0}
				\sum_{ x\in \Z^d}g(x)(1+|x|)^{\tau_-}   <+\infty,
			\end{equation}
			then the Poisson problem (\ref{eq 3.1-p})
			has a minimal  nonnegative solution  $v_g$.  \smallskip \\
			$(b)$ If  $g$ satisfies
			\begin{equation}\label{con 2.1-1}
			 \sum_{x\in\Z^d  }g(x)(1+|x|)^{\tau_-}   =+\infty,
			\end{equation}
			then  Poisson problem (\ref{eq 3.1-p}) has no solutions. \smallskip \\
			$(c)$ 
			If $\mu< -1$,  
			then Poisson problem (\ref{eq 3.1-p}) has no  solutions for any $g\gneqq0 $ in $\Z^d$.  
		\end{theorem}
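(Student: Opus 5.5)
We argue through the Green kernel of Theorem \ref{pr fun-fw}; under $(\cH_1)$, assumption $(b)$ of that theorem holds, with (\ref{con ext}) in the case $\mu=-1$.

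\emph{Part (a).} The candidate is
$$v_g(x)=\sum_{\xi\in\Z^d}\Phi_{d,\mu}(x,\xi)g(\xi).$$
To see that the series converges, fix $x$. By the two-sided bound (\ref{2.1-E-bf}), for $|\xi|$ large we have $\Phi_{d,\mu}(x,\xi)\le C_x(1+|\xi|)^{\tau_-}$. Here we need symmetry $\Phi_{d,\mu}(x,\xi)=\Phi_{d,\mu}(\xi,x)$, which follows since $\cL_\mu$ is symmetric and the minimal kernel is the increasing limit of Dirichlet Green functions on balls $B_n$. We also need the constant to be locally uniform in $x$; this holds because the constant in the estimate depends only on $|x-\xi|$ being large, and $|x-\xi|\sim|\xi|$. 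Condition (\ref{con 2.1-0}) then gives $v_g(x)<\infty$.

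Applying $\cL_\mu$ termwise is legitimate because $\cL_\mu$ involves only finitely many neighbours. This gives $\cL_\mu v_g=g$. Minimality follows by approximation: $v_g$ is the increasing limit of $v_n=\sum_\xi G_{B_n}(\cdot,\xi)g(\xi)$, the solutions of the Dirichlet problems on $B_n$. Since $\lambda_1(\cL_\mu,B_n)>0$ by the Hardy inequality (\ref{Htineq}) and $H_0\le\overline H_0$, each $v_n$ is dominated by any nonnegative supersolution $u$ via the comparison principle on the finite set $B_n$.

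\emph{Part (b).} Suppose $u\ge0$ solves (\ref{eq 3.1-p}). If $g\equiv0$ there is nothing to prove, so assume $g\not\equiv0$. The strong maximum principle (using $\cL_\mu u\ge0$ and the connectedness of the graph) gives $u>0$. The comparison of Part (a) then yields $u\ge v_n$ for all $n$, hence
$$u(0)\ge\sum_\xi\Phi_{d,\mu}(0,\xi)g(\xi)\ge c\sum_\xi g(\xi)(1+|\xi|)^{\tau_-}=+\infty,$$
where the second inequality uses the lower bound in (\ref{2.1-E-bf}) together with $\Phi_{d,\mu}(0,\xi)>0$ for the finitely many remaining $\xi$. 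This is a contradiction.

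\emph{Part (c).} This is the main obstacle, since Theorem \ref{pr fun-fw} gives no kernel when $\mu<-1$. The plan is to show that $\cL_\mu$ admits no positive supersolution outside a large ball, by a test-function argument at infinity. Since $H_0\ge(1-\epsilon)\overline H_0$ for large $|x|$ and (\ref{ome-1}) holds, we have $\mu H_0\le -(1+\delta)\frac{(d-2)^2}{4}|x|^{-2}$ for $|x|\ge R$, with $\delta>0$. A positive supersolution $u$ on $\{|x|\ge R\}$ would, via the ground state transform (Agmon--Allegretto--Piepenbrink), imply
$$\tfrac12\sum_{x}\sum_{y\sim x}(\varphi(x)-\varphi(y))^2\ge(1+\delta)\tfrac{(d-2)^2}{4}\sum|x|^{-2}\varphi^2$$
for all $\varphi$ supported in $\{|x|\ge R\}$. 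We then contradict this using the test functions
$$\varphi_k(x)=|x|^{\frac{2-d}{2}}\eta_k(\ln|x|),$$
with $\eta_k$ a cutoff spread over $[\ln R',k]$. Their Dirichlet energy is $\frac{(d-2)^2}{4}\sum|x|^{-2}\varphi_k^2\,(1+o(1))$, where the discrete error terms of order $|x|^{-3}$ are controlled by Taylor expansion. This shows the optimal constant at infinity is exactly $\frac{(d-2)^2}{4}$, which rules out $\mu<-1$. The hard step is the careful discrete Taylor estimate of the energy of $\varphi_k$.
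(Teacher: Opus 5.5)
Parts (a) and (b) follow the paper's route (Propositions \ref{pr 3.1} and \ref{pr 3.2}): Dirichlet approximation on balls, monotone limits, comparison on finite sets, and the two-sided bound (\ref{2.1-E-bf}). The only difference is that you use symmetry of the minimal kernel. The paper instead tests the Dirichlet problem against $\Phi_{d,\mu}(\cdot,\xi_0)$ with Lemma \ref{Green formula} to get $v_{n,g}(\xi_0)\le\sum_y g(y)\Phi_{d,\mu}(y,\xi_0)$, which keeps the pole fixed. Both work. Your extra claim of local uniformity in $x$ is unnecessary, and it is not what Theorem \ref{pr fun-fw} gives, since that constant depends on the pole. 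After symmetrizing, $x$ itself is the fixed pole.

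There is a gap in (b) at $\mu=-1$. Your first sentence imports (\ref{con ext}), but the theorem assumes it only in (a). Without it, Theorem \ref{pr fun-fw} gives you no kernel. This is not cosmetic. For $H_0\equiv\overline H_0$, the ground state transform with $\Phi_d^{1/2}$ turns $-\Delta-\overline H_0$ into a graph Laplacian with edge weights $\asymp|x|^{2-d}$, which is recurrent. So the operator is critical, and the Dirichlet Green functions on $B_n$ increase to $+\infty$. The repair is to lower the potential, as the paper does in Proposition \ref{pr 3.2}. Set $\tilde H_0:=\min\{H_0,\ \overline H_0(1-(2+|x|)^{-\varepsilon_0})\}$; taking the minimum with $H_0$ is what guarantees $\tilde H_0\le H_0$. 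Then $\tilde H_0$ still satisfies $(\cH_1)$ and lies strictly below $\overline H_0$. Any solution therefore satisfies $-\Delta u-\tilde H_0u\ge g$, and your comparison argument runs with the kernel of $-\Delta-\tilde H_0$.

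For (c) your route is genuinely different. The paper (Proposition \ref{pr 3.3}) bootstraps through the critical case:
\begin{itemize}
\item it writes $-\Delta u-H_0u=(-1-\mu)H_0u+g=:\bar g$;
\item it uses the $\mu=-1$ kernel to get $u\ge c(1+|x|)^{\tau_-(-1)}$, hence $\bar g\gtrsim|x|^{\tau_-(-1)-2}$ and $\sum\bar g\,|x|^{\tau_-(-1)}\gtrsim\sum|x|^{-d}=+\infty$;
\item it then applies (b) with $\mu=-1$.
\end{itemize}
You instead turn a positive solution into a Hardy inequality at infinity with constant $(1+\delta)\frac{(d-2)^2}{4}$ via the ground-state identity, and refute it with logarithmic cutoffs of $|x|^{(2-d)/2}$. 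Both arguments exploit the same logarithmic divergence, that of $\sum\overline H_0\Phi_d$. Yours does not depend on the critical-case kernel or its lower bound, so the issue above does not arise. It also proves more: for $\mu<-1$, $\cL_\mu$ has no positive supersolution outside any ball. The paper's route is shorter once (b) is available.

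Your ``hard step'' is routine. Take $\eta_k(t)=\eta((t-\ln R')/k)$ for a fixed $\eta\in C_c^\infty(0,1)$. Then all discretization errors are $O\big(\sum_{|x|\ge R'}|x|^{-d-1}\big)=O(1)$ uniformly in $k$, the $\eta_k'$ term contributes $O(1/k)$, and $\sum|x|^{-2}\varphi_k^2\asymp k$. You should also record that $u>0$ on all of $\Z^d$, since the ground-state identity needs it. This follows from the strong maximum principle and $g\gneqq0$, and holds for every $\mu$.
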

		
		Note that Theorem \ref{teo 1-p}  provides a complete classification of the nonnegative Poisson problem for the Hardy‑type operator  $\cL_\mu$ on the lattice graph $\Z^d$.  The solvability is governed by a single weighted summability condition with threshold  $\tau_-(\mu)$, which originates from the asymptotic behaviour of the Hardy potential at infinity.    It not only offers a sharp classification for the lattice Hardy-Poisson problem but also highlights the simplifying and novel effects of discrete geometry in critical potential problems, providing a solid foundation for the study of semilinear equations.

		Recently, semilinear elliptic problems on graphs attracts  more and more attention
		see  \cite{CH,KLS,HLW,HLY,GL,LW,LLY} and the references therein.  Authors in \cite{BMP} studied 
		the Liouville theorem for the linear problem
		$$\Delta u-Vu=0\quad {\rm in}\ \ G, $$
		where $G$ is an infinite weighted graph and $V$ is a super critical potential.  
		Recently,  \cite{MPS,GHS,GHJ,GS} studied 
		the Liouville theorem for the semilinear problem
		$$\Delta u+Vu^p=0\quad {\rm in}\ \ G, $$
		under suitable assumptions on potential $V$ and $p>1$ being subcritical.  
		
		Motivated by the classification of Theorem \ref{teo 1-p}, 
		we  finally  show the existence and nonexistence of  solution of semilinear   elliptic  inequality 
		\begin{equation}\label{eq 1.1-ext-}
			\left\{\arraycolsep=1pt
			\begin{array}{lll}
				\cL_\mu  u\geq W u^p \quad
				&{\rm in}\ \  \Z^d, \\[2mm]
				\phantom{ -\  }
				  u\gneqq 0  \quad  &{\rm   in} \ \  \Z^d, 
			\end{array}
			\right.
		\end{equation}
		where $p>0$,  $\mu\geq-1$ and $W,\, H_0\in C(\Z^d)$ are positive  potentials.  We  find different type Serrin's critical exponents according to 
		the Hardy potential and coefficient $\mu$,  build the nonexistence in the sub critical cases and construct super super solution in the super critical cases.

		The rest of this paper is organized as follows. In section 2, we analyze the basic properties of Hardy operator  including  the comparison principle. 
		In Section 3, we show the existence of fundamental solution of  Hardy operator in $\Z^d$. 
		In Section 4, we give  the classification of Poisson problems.    Section 5 is devoted to  show the nonexistence of positive solution to (\ref{eq 1.1-ext}) in the sub-critical cases and existence in the super critical cases.     
		
		\setcounter{equation}{0}
		\section{ Preliminary  }

		For $x=(x_{1}, \cdots ,x_{d})\in\Z^d$, denote the Euclidean norm and
		the graph-distance norm, respectively, by
		$$|x| := (\sum_{i=1}^{d}x_{i}^2)^{\frac{1}{2}}\qquad \text{ and } \qquad
		|x|_Q=\sum_{i=1}^d|x_i|.$$
		Given a subset  $\Omega \subset \Z^d$, the exterior boundary is defined as 
		$$\partial \Omega  = \{y\notin \Omega : \exists x\in \Omega , \text{ s.t. } x \sim y \},$$ 
		and the interior boundary as 
		$$\delta \Omega  = \{x \in \Omega : \exists y\notin \Omega , \text{ s.t. } x \sim y \}.$$ 
		Write the closure of $\Omega $ as $\overline{\Omega }:= \Omega \cup \partial \Omega $. 
		We use $C(\Omega )$ for the set of functions with real value in $\Omega $ and $C_0(\Omega )$ for the set of
		functions in $\overline{\Omega }$ with zero values in $\partial \Omega $.
		The notation $f\asymp g$ for positive functions means a two-sided
		comparison by positive constants on the stated set.
		The associated energy form of Laplacian is, for all $f,g\in C(\Z^d)$, 
		\[\mathcal{E}(f,g)
		:=\frac{1}{2}\sum_{x\in\Z^d}\sum_{\substack{y\in \Z^d  \\x\sim y}}
		(f(x)-f(y) )(g(x)-g(y)),
		\]
		and we write $\mathcal{E}(f)= \mathcal{E}(f,f)$.
		By restricting the energy form to a subset $\Omega $, we have
		\[
		\mathcal{E}_\Omega (f,g)
		:=
		\frac{1}{2}\sum_{x\in Z^d}\sum_{\substack{y\in \Omega \\x\sim y}}
		(f(x)-f(y) )(g(x)-g(y))
		+
		\sum_{x\in \Omega }\sum_{\substack{y\in\partial \Omega \\x\sim y}}
		(f(x)-f(y) )(g(x)-g(y)).
		\]
		
		\begin{lemma}\label{Green formula}
			Let $D\subset \Z^d$ be a finite subset. For any $f\in C(\Z^d)$ and $g\in C_{0}(D)$,  we have discrete Green formula
			$$
			\mathcal{E}_D(f,g)
			= \sum_{x\in D}(-\Delta f)(x)g(x).
			$$
		\end{lemma}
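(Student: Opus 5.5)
The plan is to expand the definition of $\mathcal{E}_D(f,g)$, use the support condition $g=0$ outside $D$ (in particular on $\partial D$), and regroup the double sum as a sum over $x\in D$ of $g(x)$ times a sum over neighbours. Every sum is finite because $D$ is finite and each vertex has degree $2d$, so no convergence issues arise.

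\textbf{Step 1: reduce to edges touching $D$.} Since $g\in C_0(D)$, we may regard $g$ as extended by zero to all of $\Z^d$; the definition only prescribes $g=0$ on $\partial D$, and vertices at distance $\ge 2$ from $D$ never appear in what follows. Then $g(x)-g(y)=0$ whenever both $x,y\notin D$. Hence only edges with at least one endpoint in $D$ contribute. These split into two kinds:
\begin{itemize}
\item interior edges $\{x,y\}$ with $x,y\in D$;
\item boundary edges $\{x,y\}$ with $x\in D$ and $y\in\partial D$.
\end{itemize}
The definition of $\mathcal{E}_D$ is written precisely to record each edge once. The first sum, carrying the factor $\tfrac12$ over ordered pairs, counts each interior edge exactly once. (Its outer index nominally runs over $\Z^d$, but for $y\in D$ the boundary-vertex terms $x\in\partial D$ are reassigned; I would read the formula so that each edge meeting $D$ is counted once in total, which is the intended meaning.) The second sum counts each boundary edge once.

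\textbf{Step 2: expand the products and symmetrize.} For an interior edge, write
\[
(f(x)-f(y))(g(x)-g(y)) = (f(x)-f(y))g(x) + (f(y)-f(x))g(y).
\]
Summing over ordered pairs in $D\times D$ with the factor $\tfrac12$ gives
\[
\sum_{x\in D}g(x)\sum_{y\in D,\,y\sim x}\big(f(x)-f(y)\big).
\]
For a boundary edge, $g(y)=0$, so the term is simply $(f(x)-f(y))g(x)$.

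\textbf{Step 3: recombine.} Adding the two contributions, the neighbours of each $x\in D$ lie in $D\cup\partial D$, so together they exhaust all $y\sim x$. This gives
\[
\sum_{x\in D}g(x)\sum_{y\sim x}\big(f(x)-f(y)\big)=\sum_{x\in D}(-\Delta f)(x)\,g(x),
\]
which is the claimed formula.

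\textbf{Main obstacle.} The only delicate point is the bookkeeping in the definition of $\mathcal{E}_D$: checking that no edge is double-counted or omitted given the index ranges. I would handle it by arguing edge by edge, as in Steps 1–2, rather than manipulating the sums formally.
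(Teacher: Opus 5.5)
Your proof is correct and is essentially the paper's argument. You split $\mathcal{E}_D(f,g)$ into interior and boundary edges, symmetrize the interior part to get $\sum_{x\in D}g(x)\sum_{y\in D,\,y\sim x}(f(x)-f(y))$, and use $g=0$ on $\partial D$ so the neighbour sums combine into $(-\Delta f)(x)$. The paper drops the $g(y)$ boundary term only at the last step, while you do it up front; your reading of the definition, with each edge meeting $D$ counted exactly once even though the first sum nominally runs over $x\in\Z^d$, is the same interpretation the paper's proof adopts in its first line.
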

		
		\noindent{\bf Proof. } 
		\begin{align*}
			\mathcal{E}_D(f,g)
			&=
			\left(
			\sum_{\substack{x,y\in D\\x\sim y}}
			+
			\sum_{\substack{x\in D,y\in\partial D\\x\sim y}}
			\right)
			(f(x)-f(y))(g(x)-g(y))
			\\
			&=
			\frac12
			\sum_{x\in D}
			\sum_{\substack{y\in D\\y\sim x}}
			(f(x)-f(y))g(x)
			-
			\frac12
			\sum_{x\in D}
			\sum_{\substack{y\in D\\y\sim x}}
			(f(x)-f(y))g(y)
			\\
			&\quad+
			\sum_{x\in D}
			\sum_{\substack{y\in\partial D\\y\sim x}}
			(f(x)-f(y))(g(x)-g(y))\\
			&=
			\sum_{x\in D}
			\sum_{\substack{y\in D\\y\sim x}}
			(f(x)-f(y))g(x)
			+
			\sum_{x\in D}
			\sum_{\substack{y\in\partial D\\y\sim x}}
			(f(x)-f(y))(g(x)-g(y))\\
			&=
			\sum_{x\in D}
			(-\Delta f)(x)g(x)
			-
			\sum_{x\in D}
			\sum_{\substack{y\in\partial D\\y\sim x}}
			(f(x)-f(y))g(y),
		\end{align*}
		and the second term vanishes when $g=0$ on $\partial D$.
		\hfill$\Box$\medskip

		The following is the comparison and strong maximum principle in finite domain. 
		
		\begin{lemma}\label{com lm}
			Assume that  $d\geq 3$, $\mu\geq -1$  and $ \mu H_0+\overline{H}_0>0$in $\Z^d$.
			Let $\Omega \in \Z^d$ be a finite subset and $w: \overline{\Omega} \to \R$ be a function fulfilling 
			\begin{equation}\label{eq 2.1 cm}
				\left\{\arraycolsep=1pt
				\begin{array}{lll}
					\cL_\mu   w     \geq  0   \quad
					&{\rm in}\ \  \Omega , \\[2mm]
					\phantom{ - \  }
					w\geq 0 \quad &{\rm   on}\  \,    \partial  \Omega, 
				\end{array}
				\right.
			\end{equation}
			then $w\geq 0$.
			Moreover, if $\Omega$ is connected, then
			either $w\equiv 0$ in $\Omega$ or  $u>0$ in $\Omega$. 
		\end{lemma}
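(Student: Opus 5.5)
The plan is a ground state transform built from the positive function $h:=\Phi_d^{\frac12}$. This function is strictly positive on $\Z^d$, since the Green function $\Phi_d$ of $-\Delta$ on the transient graph $\Z^d$, $d\geq3$, is positive everywhere. By the definition (\ref{Green 1}) it satisfies $-\Delta h=\overline{H}_0 h$ in $\Z^d$. Given $w$ as in (\ref{eq 2.1 cm}), I write $w=hv$ on $\overline{\Omega}$ and rewrite the inequality $\cL_\mu w\ge 0$ as an inequality for $v$ in which the zero-order coefficient is $\mu H_0+\overline{H}_0>0$. The hypothesis is designed to make exactly this coefficient positive.

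\textbf{Step 1: the transformed operator.} For $x\in\Omega$ I expand
$$-\Delta(hv)(x)=\sum_{y\sim x}\big(h(x)v(x)-h(y)v(y)\big)=\sum_{y\sim x}h(y)\big(v(x)-v(y)\big)+v(x)\big(-\Delta h\big)(x).$$
This gives
$$\cL_\mu w(x)=\sum_{y\sim x}h(y)\big(v(x)-v(y)\big)+\big(\overline{H}_0(x)+\mu H_0(x)\big)h(x)v(x)\ \ge 0\qquad\text{for } x\in\Omega .$$
This identity is purely algebraic and needs no Green formula. Only the values of $v$ on $\overline{\Omega}$ enter, since every neighbour of a point of $\Omega$ lies in $\overline{\Omega}$.

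\textbf{Step 2: weak comparison.} Since $\overline{\Omega}$ is finite, $v$ attains its minimum over $\overline{\Omega}$ at some point $x_0$. Suppose for contradiction that $v(x_0)<0$. Because $v=w/h\ge0$ on $\partial\Omega$, the point $x_0$ must lie in $\Omega$. At $x_0$ every difference $v(x_0)-v(y)$ is $\le 0$ and $h(y)>0$, so the sum in Step 1 is $\le0$. The second term equals $(\overline{H}_0+\mu H_0)(x_0)h(x_0)v(x_0)$, which is strictly negative. Hence $\cL_\mu w(x_0)<0$, a contradiction, and therefore $v\ge0$ and $w\ge0$ on $\overline{\Omega}$.

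\textbf{Step 3: strong maximum principle.} Assume $\Omega$ is connected and $w(x_1)=0$ for some $x_1\in\Omega$. Then $v(x_1)=0$, and Step 1 at $x_1$ gives $-\sum_{y\sim x_1}h(y)v(y)\ge0$. Since $v\ge0$ by Step 2 and $h>0$, this forces $v(y)=0$ for every neighbour $y$ of $x_1$. By connectedness of $\Omega$, every point of $\Omega$ is reached from $x_1$ by a path inside $\Omega$, and iterating this argument along such a path gives $w\equiv0$ in $\Omega$. Otherwise $w>0$ in $\Omega$; the statement's ``$u>0$'' is read as $w>0$.

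I do not expect a genuine obstacle. The only point needing care is the choice of the multiplier $h=\Phi_d^{1/2}$. It is exactly what converts the possibly negative coefficient $\mu H_0$ (when $\mu<0$) into the positive coefficient $\mu H_0+\overline{H}_0$. A direct minimum-point argument on $w$ itself would fail in that case.
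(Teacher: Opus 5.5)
Your proof is correct, but it takes a different route from the paper.

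\textbf{The paper's route.} The paper argues through the energy. It multiplies $\cL_\mu w\ge 0$ by $w_-$ and sums over $\Omega$. The discrete Green formula (Lemma \ref{Green formula}) turns $\sum_{\Omega}(-\Delta w)w_-$ into $\mathcal{E}_\Omega(w,w_-)$. The truncation inequality $(a-b)(a_--b_-)\ge (a_--b_-)^2$ lowers this to $\mathcal{E}_\Omega(w_-,w_-)$. The Hardy inequality (\ref{Htineq}) then yields $0\ge \sum_{\Omega}(\overline{H}_0+\mu H_0)w_-^2$, so $w_-\equiv 0$. The strong maximum principle part is literally the same as your Step 3, because $\sum_{y\sim x_1}h(y)v(y)=\sum_{y\sim x_1}w(y)$.

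\textbf{Your route.} You use the ground state $h=\Phi_d^{1/2}$ directly, pointwise. This is the object from which (\ref{Htineq}) is derived in the first place, and your Step 1 identity is the pointwise form of the ground-state representation. In effect $h$ is a strictly positive strict supersolution, $\cL_\mu h=(\overline{H}_0+\mu H_0)h>0$. Dividing by it and applying a minimum principle to $v=w/h$ is the classical argument.

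\textbf{What each buys.} Your argument is fully self-contained. It needs no Green formula, no quadratic forms and no external Hardy inequality, only $\Phi_d>0$ and the definition (\ref{Green 1}). It also shows that the only hypothesis actually used is strict positivity of $\overline{H}_0+\mu H_0$; neither $\mu\ge -1$ nor $H_0\le\overline{H}_0$ is needed separately. The paper's argument fits the variational framework it uses elsewhere, for instance existence by minimizing $\cJ_\mu$. It requires only the quadratic-form Hardy inequality, not an explicit positive solution of $-\Delta h=\overline{H}_0 h$, although on $\Z^d$ both are available.
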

		\noindent{\bf Proof. }  
		Define $a_{\pm}=\pm \max\{0,\pm a\}$. Direct computation shows that 
		\[(a-b)(a_{-} - b_{-})  \geq (a_{-}-b_{-})^2.\]
		Multiply (\ref{eq 2.1 cm}) with $ w_-$  and sum over $\Omega$, then we obtain  
		\begin{align*}
			0
			&\geq
			\sum_{x\in\Omega}(-\Delta w)(x)w_-(x)
			+\mu\sum_{x\in\Omega}H_0(x)w_-^2(x)\\
			&=
			\mathcal{E}_{\Omega}(w,w_-)
			+\mu\sum_{x\in\Omega}H_0(x)w_-^2(x)\\
			&\geq
			\mathcal{E}_{\Omega}(w_-,w_-)
			+\mu\sum_{x\in\Omega}H_0(x)w_-^2(x)\\
			&\geq
			\sum_{x\in\Omega}\left(\overline H_0(x)+\mu H_0(x)
			\right) w_-^2(x),
		\end{align*}
		where the equality comes from the Green formula and the fact $w_{-}=0$ in $\partial \Omega$, and the last inequality is confirmed by the Hardy inequality (\ref{Htineq}).
		By assumption,
		\[
		\overline{H}_0(x)+\mu H_0(x)
		=
		\overline{H}_0(x)-H_0(x)
		+
		(\mu+1)H_0(x) \geq 0.
		\]
		Consequently,
		\[
		0\geq
		\sum_{x\in\Omega}
		\left(
		\overline{H}_0(x)+\mu H_0(x)
		\right)
		w_-^2(x)
		\geq0 ,
		\]
		which implies $w_-(x)=0$ in $\Omega$.
		Hence,
		\[
		w\geq0
		\qquad\text{in }\Omega .
		\]
		Assume that 
		$x_0\in\Omega$ is such that $w(x_0)=0$, then, together with $w\geq0$ in $\Omega\cup \partial \Omega$, 
		\[
		0\leq 
		-\Delta w(x_0)
		+\mu H_0(x_0)w(x_0)
		=
		-\Delta w(x_0)
		=
		\sum_{y\sim x_0}
		-w(y) \leq 0.
		\]
		As a consequence,
		\[
		w(y)=0,
		\qquad \forall y\sim x_0 .
		\]
		Repeating the argument along the connected graph $\Omega$, we obtain $w\equiv0$
		in $\Omega$.
		\hfill$\Box$\medskip

		Next  we   provide some basic calculations for power test functions, which is very useful in the derivation of the fundamental solutions and in the construction super and sub solutions for semilinear models. 
		
		For $\sigma\in\R$, denote $\varphi_\sigma\in C^2(\R_+)$
		$$\varphi_\sigma (t)=(1+t)^{\frac{\sigma}2}\quad{\rm for}\ \ \forall\, t>1,$$
		where $\R_+=[0,+\infty)$. 
		Direct computation shows that 
		\begin{align*}
			\varphi_\sigma'(t)=\frac{\sigma}2  (1+t)^{\frac{\sigma}2-1},\ \ \varphi_\sigma''(t)= \frac{\sigma}2(\frac{\sigma}2-1) (1+t)^{\frac{\sigma}2-2}.
		\end{align*}
		Now we set $\psi_\sigma(x)=\varphi_\sigma(|x|^2)$ . 
		Then for $x\in\Z^d$, we see that 
		\begin{align*}
			\Delta \psi_\sigma(x)   &= \sum_{y\sim x}\big(\psi_\sigma(y)-\psi_\sigma(x)\big)  
			\\[1mm]&= \sum_{y\sim x} \Big[\frac{\sigma}2  (1+|x|^2)^{\frac{\sigma}2-1}(|y|^2-|x|^2) +\frac14\sigma(\frac{\sigma}2-1)  (1+|x|^2)^{\frac{\sigma}2-2}(|y|^2-|x|^2)^2  \Big]   (1+o(1))
			\\[1mm]&=d\sigma  (1+|x|^2)^{\frac{\sigma}2-1}   + \frac14\sigma(\frac{\sigma}2-1) (1+|x|^2)^{\frac{\sigma}2-2}\big(8|x|^2+2d\big)(1+o(1))
			\\[1mm]&= \sigma \big( \sigma-2+d\big)(1+|x|^2)^{\frac{\sigma}2 -1}+(d-4) \sigma(\frac{\sigma}2-1) (1+|x|^2)^{\frac{\sigma}2-2}(1+o(1)),  
		\end{align*}
		thus,  for $|x|$ large
		\begin{align}\label{est-1}
			-\Delta \psi_\sigma(x) +\mu  H_0(x) \psi_\sigma(x) &=\beta_0(\sigma)(1+|x|^2)^{\frac{\sigma-2}2}+o((1+|x|^2)^{\frac{\sigma-2}2} ), 
		\end{align}
		where 
		$$\text{ $\beta_0(\sigma)= -  \sigma \big( \sigma-2+d\big)+\mu\frac{(d-2)^2}{4}=0$ with two roots $\sigma = \tau_{\pm}(\mu)$.}$$
		Under the assumption $(\cH_1)$,  formula \eqref{est-1}  could be improved to 
		\begin{align}\label{est-1-im}
			-\Delta \psi_\sigma(x) +\mu  H_0(x) \psi_\sigma(x) &=\beta_0(\sigma)(1+|x|^2)^{\frac{\sigma-2}2}+O((1+|x|^2)^{\frac{\sigma-2-\epsilon_0}2} ). 
		\end{align}

		\setcounter{equation}{0}
		\section{Fundamental solutions for the Hardy operator}
		
		We  approximate the fundamental solution by the ones   in bounded domains:
		\begin{equation}\label{eq 2.1-E-bf}
			\left\{
			\begin{array}{lll}
				\cL_{\mu}  u   = \delta_{\xi} \quad\ \, 
				& {\rm in}\ \   B_n(\bar \xi), 
				\\[2mm]
				\phantom{ -\ }
				u=0\quad\ \  &{\rm in} \ \ \Z^d\setminus B_n(\bar \xi), 
			\end{array}
			\right.
		\end{equation}
		where $\xi\in B_n(\bar \xi)$ for $n\in\N$ and some $\bar \xi\in \Z^d$. We remark that the Dirichlet boundary could be replaced by 
		$$u=0\quad\ \  {\rm on} \ \ \partial B_n(\bar \xi). $$

		\begin{lemma}\label{lm fun-bf-1}
			Let the assumptions of  Theorem  \ref{pr fun-fw} hold, then for any $n\in\N$, problem (\ref{eq 2.1-E-bf}) has a unique positive solution $v_{n}(\cdot,\xi)$, and the mapping $n\mapsto v_{n}(\cdot,\xi)$ is increasing.
			Futhermore,
			\begin{align}  \label{2.1-b-h1}
				v_{n}(\xi,\xi) \leq \frac1{ \overline{H}_0(\xi) +\mu    H_0(\xi) }  
			\end{align} 
			and
			\begin{align}  \label{2.1-b-h1-1}
				v_n(x,\xi)\leq C_{x,\xi}  ,
			\end{align}
			where the constant $C_{x,\xi}>0$ depends on $x, \xi,\mu$ and $ H_{0}$,
			but is independent of $n$.
			
			Besides, when $\mu\geq0$, 
			\begin{align}  \label{2.1-b-h2}
				0\leq v_{n}(\cdot,\xi)\leq v_{n}(\xi,\xi)\leq  \frac1{ \overline{H}_0(\xi) +\mu    H_0(\xi) }    \quad {\rm in}\ \Z^d. 
			\end{align} 	
		\end{lemma}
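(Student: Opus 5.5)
Problem (\ref{eq 2.1-E-bf}) lives on the finite set $B_n(\bar\xi)$, so it is a finite linear system: the unknowns are the values $u(x)$, $x\in B_n(\bar\xi)$, and $u=0$ outside. The plan is to read off existence and uniqueness from Lemma \ref{com lm}, and then obtain the bounds by testing the equation against $u$ itself and by comparison arguments.

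\emph{Existence, uniqueness and positivity.} We first check that Lemma \ref{com lm} applies. Under (a), $\mu>-1$ and $0\le H_0\le \overline H_0$, so
$$\overline H_0+\mu H_0=(\overline H_0-H_0)+(1+\mu)H_0\ge (1+\mu)\min\{H_0,\overline H_0\}.$$
This is $>0$ wherever $H_0>0$, and equals $\overline H_0>0$ where $H_0=0$. Under (b) with $\mu=-1$ we have $\overline H_0+\mu H_0=\overline H_0-H_0>0$ by the extra hypothesis; for $\mu>-1$ it is positive as before.

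With the hypothesis of Lemma \ref{com lm} in hand, the homogeneous problem ($\cL_\mu w=0$ in $B_n$, $w=0$ outside) forces $w\ge0$ and, applied to $-w$, also $-w\ge0$. Hence $w\equiv0$, the linear map is injective, and by finite dimension it is bijective. This gives a unique solution $v_n$.

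Since $\cL_\mu v_n=\delta_\xi\ge0$, Lemma \ref{com lm} gives $v_n\ge0$. The function $v_n$ is not identically zero because $\delta_\xi\ne0$. Strong positivity on the connected ball then follows from the second part of Lemma \ref{com lm}.

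\emph{Monotonicity in $n$.} Set $w=v_{n+1}-v_n$ on $B_n$. Then $\cL_\mu w=0$ in $B_n$ and $w=v_{n+1}\ge0$ on $\partial B_n$, so Lemma \ref{com lm} gives $v_{n+1}\ge v_n$.

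\emph{Bound (\ref{2.1-b-h1}).} Multiply the equation by $v_n$ and sum over $B_n$. Lemma \ref{Green formula} (valid since $v_n\in C_0(B_n)$) together with the Hardy inequality (\ref{Htineq}) gives
$$v_n(\xi,\xi)=\mathcal E(v_n)+\mu\sum H_0v_n^2\ge\sum_x\big(\overline H_0+\mu H_0\big)(x)\,v_n(x)^2\ge (\overline H_0+\mu H_0)(\xi)\,v_n(\xi,\xi)^2.$$
Here $\mathcal E_{B_n}(v_n)=\mathcal E(v_n)$ because $v_n$ vanishes off $B_n$, and the last step uses that every term of the sum is nonnegative. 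Dividing by $v_n(\xi,\xi)>0$ yields (\ref{2.1-b-h1}).

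\emph{Bound (\ref{2.1-b-h1-1}).} The same chain of inequalities gives, for every $x$,
$$(\overline H_0+\mu H_0)(x)\,v_n(x)^2\le v_n(\xi,\xi)\le \frac{1}{(\overline H_0+\mu H_0)(\xi)}.$$
Since $\overline H_0+\mu H_0>0$ pointwise, we get
$$C_{x,\xi}=\Big[(\overline H_0+\mu H_0)(x)\,(\overline H_0+\mu H_0)(\xi)\Big]^{-1/2},$$
which is independent of $n$. This is the step I expected to be the obstacle, namely getting an $n$-independent bound away from $\xi$ without barriers. Positivity of the quadratic weight at each point makes it immediate, at the cost of constants that degenerate as $|x|\to\infty$, which is acceptable here.

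\emph{Bound (\ref{2.1-b-h2}) for $\mu\ge0$.} Let $M=\max v_n$, attained at some $x_0\in B_n$ (recall $v_n>0$ inside and $v_n=0$ outside). Suppose $x_0\ne\xi$. At $x_0$ the equation reads
$$0=-\Delta v_n(x_0)+\mu H_0(x_0)v_n(x_0),\qquad -\Delta v_n(x_0)\ge0,\qquad \mu H_0(x_0)v_n(x_0)\ge0.$$
Both terms therefore vanish, so every neighbour of $x_0$ also attains the value $M$. Propagating this along paths in $B_n$, either we reach $\xi$, giving $v_n(\xi)=M$, or we reach a boundary neighbour where $v_n=0<M$, which is a contradiction. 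Hence $\max v_n=v_n(\xi,\xi)$, and combining with (\ref{2.1-b-h1}) gives (\ref{2.1-b-h2}).
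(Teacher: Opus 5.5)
Your proof is correct. It departs from the paper's argument in two places.

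\textbf{Existence.} The paper minimizes the coercive, strictly convex functional $\mathcal{J}_\mu(v)=\mathcal{E}_\mu(v)-v(\xi)$ over functions vanishing off $B_n(\bar\xi)$. It reads the equation off the Euler--Lagrange identity and proves uniqueness separately by comparison. You instead treat (\ref{eq 2.1-E-bf}) as a square linear system and apply Lemma \ref{com lm} to $\pm w$ to get injectivity, hence invertibility, so existence and uniqueness come out together. Both routes rest on the same positivity ($\overline H_0+\mu H_0>0$ plus the Hardy inequality). Yours avoids the variational set-up at no cost, since you recover $v_n(\xi,\xi)=\mathcal{E}_\mu(v_n)$ directly from Lemma \ref{Green formula}.

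\textbf{The pointwise bound (\ref{2.1-b-h1-1}).} The paper propagates along a shortest path $\xi=x_0\sim x_1\sim\cdots\sim x_l=z$, using $a(x)v_n(x)=\sum_{y\sim x}v_n(y)$ for $x\neq\xi$ with $a=2d+\mu H_0$. This gives $v_n(z)\le a(x_{l-1})\cdots a(x_1)\big(a(\xi)v_n(\xi)-1\big)$. That bound uses only the bounded coefficients $a$ and the bound at $\xi$, but it grows exponentially in the graph distance. Your constant $\big[(\overline H_0+\mu H_0)(x)\,(\overline H_0+\mu H_0)(\xi)\big]^{-1/2}$ drops straight out of the energy identity you already need for (\ref{2.1-b-h1}). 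It grows at most linearly in $|x|$ when $\mu>-1$. For $\mu=-1$, however, it depends on the size of $\overline H_0-H_0$ at $x$, of which only positivity is known. Later in the paper only $n$-independence is used (to pass to the pointwise limit and to bound $v_n$ on the finite set $\delta B_{r_0}(0)$), so either bound suffices.

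\textbf{The maximum principle for (\ref{2.1-b-h2}).} This is the paper's argument, but you explicitly allow the maximum to be attained at $\xi$ as well. The paper's $\mathcal{O}_+$ argument passes over that case. The non-strict inequality you obtain is exactly what (\ref{2.1-b-h2}) asserts.
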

		{\bf Proof. }  
		Note that, under the assumptions on $\mu$ and $H_{0}$, we have
		\begin{equation}\label{positv}
			(\overline{H}_0+\mu H_0)(x) > 0,\quad \forall x\in  \Z^d.
		\end{equation}
		The uniqueness follows by the comparison principle. \smallskip
		
		\noindent{\it Existence in bounded domains:}  
		Denote the quadratic energy form $\mathcal{E}_{\mu}$ associated with the Hardy operator by
		\[\mathcal{E}_{\mu}(v) = \mathcal{E}(v) + \mu\sum_{x\in \Z^d}  H_{0}(x) v^2(x).  \]
		For a given nonempty set $\Omega$ in $\Z^d$,  define 
		$$\bH_{0,\mu}(\Omega)=\Big\{v\in C(\Z^d):\ v=0\ \ {\rm in}\ \,\Z^d\setminus \Omega,\  \mathcal{E}_{\mu}(v) <+\infty \Big\}. $$
		Since $\Omega=B_{n}(\bar{\xi})$ is finite here, the embedding $\bH_{0,\mu}(\Omega)\subset L^2(\Omega)$ is compact. 
		Define
		$$\cJ_{\mu}(v) :=\mathcal{E}_{\mu}(v)-v(\xi),  \quad \forall  v\in \bH_{0,\mu}(\Omega),$$
		as the energy related to (\ref{eq 2.1-E-bf}), and the following estimate comes from the Hardy inequality (\ref{Htineq}) and  (\ref{positv}),
		\begin{align*}
			\cJ_{\mu}(v) 
			& \geq \sum_{x\in \Z^d}  \big( \overline{H}_0(x)+ \mu H_{0}(x) \big) v^2(x)-v(\xi) \\
			& \geq \sum_{x\in \Omega} C_{\Omega}v^2(x) - v(\xi)\\
			&\to \infty \quad \text{as} \quad ||v||_{2} \to \infty.
		\end{align*}
		Therefore, $\cJ_{\mu}$ is continuous, coercive, and strictly convex  on $\bH_{0,\mu}(\Omega)$. Hence, it admits a unique minimizer, denoted by $v_n(\cdot,\xi)$.
		For any test function \(\phi\in H_{0,\mu}(\Omega)\), the Euler--Lagrange equation gives
		\begin{align*}
			\left.
			\frac{d}{dt}
			J_\mu(v_n+t\phi)\right|_{t=0}=0.
		\end{align*}
		Using the discrete Green formula, we obtain
		\begin{equation}\label{weakform}
			\sum_{x\in \Omega}
			\left( -\Delta v_n(x) + \mu H_0(x)v_n(x)
			\right)
			\phi(x)
			=\phi(\xi).
		\end{equation}
		Since this identity holds for every test function
		\(\phi\in H_{0,\mu}(\Omega)\), we conclude that
		\begin{equation} \label{bddeq}
			-\Delta v_n+\mu H_0v_n=\delta_\xi
			\quad \text{in }  \Omega.
		\end{equation}
		Furthermore, from the comparison principle  Lemma \ref{com lm},  we have that 
		$v_n>0$ in $B_n(\bar \xi)$ and   the mapping $n\mapsto v_{n}(\cdot,\xi)$ is increasing.  \smallskip
		\noindent{\it Bounds:} For simplicity, we use the notation $v_n=v_{n}(\cdot,\xi)$ in the following.
		Let $\phi = v_{n}$ in (\ref{weakform}), we obtain
		\begin{align*}
			v_n (\xi) = \mathcal{E}_{\mu}(v_{n}) 
			& \geq \sum_{x\in \Omega}  \big( \overline{H}_0(x)+ \mu H_{0}(x) \big) v_{n}^2(x) \\
			& \geq   \big(\overline{H}_0(\xi) +\mu    H_0(\xi)\big)   v_n^2(\xi),  
		\end{align*}
		which implies that
		$$  v_n(\xi) \leq  \frac1{\overline{H}_0(\xi) +\mu    H_0(\xi)}. $$
		For each $z\in \Omega = B_{n}(\xi)$, there exists a shortest path in $\Omega$ connecting $\xi$ and $z$, denoted by
		$$ x_0 \sim x_1 \sim \cdots x_{l},$$
		where $x_{0}=\xi$ and $ x_{l}= z$. 
		The equation (\ref{bddeq}) is equivalent to \begin{align*}
			\delta_{\xi}(x)
			 =\cL_\mu v_{n}(x) 
			&=(2d+\mu H_{0}(x))v_{n}(x) - \sum_{y\sim x} v_{n}(y)\\
			&=: a(x)v_{n}(x) - \sum_{y\sim x} v_{n}(y).
		\end{align*}
		Let $x = \xi$, we obtain
		\[ v_{n} (x_1)  <  \sum_{y\sim \xi} v_{n}(y) = a(\xi) v_{n}(\xi) - 1.
		\]
		Let $x = x_{1}$, we obtain
		\[ v_{n} (x_2)  <  \sum_{y\sim x_1} v_{n}(y) = a(x_1) v_{n}(x_1).
		\]
		Proceeding  inductively, it follows that
		\[
		v_{n}(z) = v_{n}(x_l) 
		< a(x_{l-1})\cdots a(x_{1})\big(a(\xi)v_{n}(\xi)-1\big)=:C_{z,\xi}.
		\]
		
		Finally, we show (\ref{2.1-b-h2}). To this end, we only need to show that  
		$v_{n}(x,\xi)< v_{n}(\xi,\xi)$ for $x\not=\xi$ when $\mu\geq 0$. 
		If not, there exists $x_0\in B_n(\bar \xi)\setminus \{\xi\}$ such that 
		$$v_n(x_0)=\max_{x\in\Z^d} v_n(x)\geq v_n(\xi). $$
		Let $\cO_+= \big\{x\in\Z^d\setminus\{ \xi\}:\, v_n(x)=v_n(x_0)\big\}$.
		Since $v_n=0$ in $\Z^d\setminus B_n(\bar \xi)$,
		then we can find a point $x_0\in \cO_+$ such that there exists at least one point $y\in \Z^d$ such that $y\not\in \cO_+$ and $y\sim x_0$, hence 
		$$ -\Delta  v_n(x_0)>0,  $$ 
		which  contradicts $-\Delta  v_n(x_0)+\mu H_0(x_0) v_n(x_0)=0$ when $\mu\geq 0$. 
		As a conclusion, 
		we have that, for $\mu\geq0$,
		$$0<v_n(x)<v_n(\xi)\leq \frac1{\overline{H}_0(\xi) +\mu    H_0(\xi)} \quad {\rm for}\  x\in B_n(\bar \xi)\setminus \{\xi\}.$$
		We complete the proof.  \hfill$\Box$\bigskip

		\noindent{\bf Proof of Theorem \ref{pr fun-fw}. } \
		{\it Existence:} We obtain the fundamental solution $\Phi_{d,\mu}(\cdot,\xi)$
		by approximating the solutions $v_{n}(\cdot,\xi)$. From Lemma \ref{lm fun-bf-1}, we have that, for each fixed $x$, the mapping $n\mapsto v_{n}(\cdot,\xi)$ is increasing and $\{v_{n}(x)\}_{n}$ has a uniform upper bound $C_{x,\xi}$, which is independent of $n$.
		Therefore, we can define the pointwise limit 
		$$\Phi_{d,\mu}(x,\xi)=\lim_{n\to+\infty} v_n(x,\xi)<+\infty,\quad \forall x\in \Z^d. $$
		Given $x\in \Z^d$, all its neighbours $y \sim x$ are contained in $B_{n}(\bar \xi)$  for sufficiently large $n$. Since 
		\[
		\cL_\mu  v_{n}(x) = (2d+\mu H_{0}(x))v_{n}(x)-\sum_{y\sim x} v_{n}(y)=\delta_{\xi} \quad \text{ in } B_{n}(\bar\xi),
		\]
		and the above expression only involves a finite number of terms in the summation for each fixed $x$, we may pass to the limit $n\to\infty$ and obtain
		$$\cL_\mu \Phi_{d,\mu}(x,\xi) =\delta_{\xi},\quad \forall x\in\Z^d.  $$
		Note that 
		$$v_{n}>0 \,\text{ in }  B_{n}(\bar{\xi}) \quad\text{ and } \quad
		\Phi_{d,\mu}(\cdot, \xi)=\lim_{n\to\infty}v_{n}(\cdot, \xi)\geq v_n(\cdot,\xi),$$ 
		the comparison principle yields
		\[\Phi_{d,\mu}(\cdot, \xi) >0 \quad\text{in }  \Z^d.\]
		Futhermore, for any positive solution $u$ of \eqref{eq 5.1-fw1}, we have
		$ u(x) \geq v_{n}(x,\xi)$ for every $n$.
		Passing to the limit gives 
		$u(x)\geq \Phi_{d,\mu}(x,\xi),$
		which proves that $\Phi_{d,\mu}(\cdot,\xi)$ is the minimal positive solution.
		
		The decay property
		\[
		\Phi_{d,\mu}(x,\xi)\longrightarrow 0
		\qquad\text{as }|x|\rightarrow+\infty
		\]
		follows immediately from either estimate \eqref{2.1-E-bf} or \eqref{2.1-E-bf1} by choosing $\theta< 
		\min\{-\tau_-,\tau_+-\tau_-\}$. Therefore, it remains to prove these two estimates. 
		
		\smallskip
		{\it Bounds:} For simplicity, we sometimes write
		$L_{\mu}=-\Delta+\mu H_{0}$ in the following.\\
		{\it The case $(b_1)$:  $H_0 \in C(\Z^d)$ verify  $(\cH_1)$ and $\mu > -1$.} 
		
		It follows  by \eqref{ome-1} and \eqref{est-1} that for $|x|$ large, 
		\begin{align}\label{est-1-copy}
			\cL_{\mu}\psi_\sigma(x) &= \beta_{0}(\sigma)|x|^{\sigma -2}+ O (|x|^{\sigma-2-\epsilon_0} ),
		\end{align}
		where $\beta_{0}(\sigma) =-\sigma(\sigma-2+d)+\mu\dfrac{(d-2)^2}{4}$. 
		Recall that $\tau_\pm(\mu)=\frac{2-d}{2}(1\mp \sqrt{1+\mu})$ are the two characteristic roots of $\beta_{0}(\sigma)=0.$
		
		Now we construct a suitable supersolution for $v_n$.
		Set
		$$ \bar u_1(x):= \psi_{\tau_-}(x)-\psi_{\tau_--\frac12\epsilon_0}(x)=(1+|x|^2)^{\frac{\tau_-}{2}}-(1+|x|^2)^{\frac{\tau_{-}-\frac{1}{2}\epsilon_{0}}{2}}. $$
		Then $\bar{u}_1 \sim |x|^{\tau_-} \text{ as } |x|\to+\infty,$ 
		and
		\begin{align*} 
			\cL_{\mu} \bar u_1(x)&=-\beta_{0}(\tau_--\frac12\epsilon_0 )|x|^{\tau_--2 -\frac12\epsilon_0}+ O(|x|^{\tau_--2-\epsilon_0}) , 
		\end{align*}
		where $-\beta_{0}(\tau_--\frac12\epsilon_0 )>0$.
		For some $r_0\geq  1+2|\xi|$ large enough, we derive that 
		\begin{align*} 
			\bar u_1(x)>0,\qquad  L_{\mu}  \bar u_1(x)   & \geq0 \quad {\rm in}\ \Z^d\setminus B_{r_0}(0). 
		\end{align*}
		Choose sufficiently large $n$ such that $B_{r_0}(0) \subsetneq B_{n}(\bar{\xi})$ and define $D_{n}:= B_{n}(\bar{\xi}) \setminus B_{r_0}(0).$ Then $\partial D_{n} = \partial B_{n}(\bar{\xi}) \cup \delta B_{r_0}(0)$. 
		The estimate \eqref{2.1-b-h1-1} implies that 
		$$ v_n(x,\xi)\leq C \quad{\rm on}\  \delta B_{r_0}(0),  $$
		and this holds for every $n$. Besides, $\bar{u}_1 >0 $ on  $\delta B_{r_0}(0)$. Therefore, there exists $t_0>1$ such that 
		$$t_0\bar u_1(x)\geq v_n(x,\xi)\quad  \,\quad{\rm on}\  \delta B_{r_0}(0).$$
		Moreover, $v_{n} =0$ on $\partial B_{n}(\bar{\xi})$ while $\bar{u}_1>0$, which implies
		$$t_0\bar u_1(x)\geq v_n(x,\xi)\quad  \,\quad{\rm on}\  \partial B_{n}(\bar{\xi}).$$
		By the comparison principle, 
		$$v_n(x,\xi)\leq t_0\bar u_1(x)  \quad {\rm in\ }D_n. $$
		Taking $n\to \infty$, we have
		$$\Phi_{d,\mu}(x,\xi)\leq t_0\bar u_1(x)\quad {\rm in}\ \, \Z^d\setminus B_{r_0}(0).   $$
		Since $\bar{u}_1 \sim |x|^{\tau_-}$, we conclude that
		\begin{align} \label{boun-1}
			\Phi_{d,\mu}(x,\xi)\leq C|x|^{\tau_-(\mu)}\quad {\rm in}\ \,  \Z^d\setminus B_{r_0}(0). 
		\end{align}
		For subsolution, we set
		$$\underline{u}_1= \psi_{\tau_-}+\psi_{\tau_- - \eta_{1}},$$
		and, for every $\varepsilon>0$,
		$$\underline{U}_\varepsilon =\underline{u}_1 -\varepsilon \psi_{\tau_- +\eta_2},$$
		where $0<\eta_1 <\epsilon_0$ and $ 0<\eta_2<\tau_+ - \tau_-$. 
		Then, for $|x|>r_{0}$,
		$$
		\cL_{\mu}  \underline{u}_1(x) = \beta_{0}(\tau_- - \eta_{1}) |x|^{\tau_- - \eta_{1}-2} + O(|x|^{\tau_- - \epsilon_{0}-2})\leq 0$$
		owing to $\beta_{0}(\tau_- - \eta_{1})<0$,
		and 
		$$\cL_{\mu}  \underline{U}_\varepsilon(x) = \cL_{\mu}  \underline{u}_1(x) -\varepsilon \beta_{0}( \tau_- +\eta_2)|x|^{\tau_- +\eta_2-2}(1+o(1))
		\leq 0 $$
		owing to $\beta_{0}(\tau_- + \eta_{2})>0$.
		Note that 
		\[
		\underline{U}_{\varepsilon}(x) = (1+|x|^{2})^{\frac12\tau_-} + (1+|x|^{2})^{\frac12(\tau_- - \eta_1)} - \varepsilon (1+|x|^{2})^{\frac12(\tau_- + \eta_2)} \qquad\quad \text{for} \quad|x|\geq r(\varepsilon),
		\]
		with $r(\varepsilon) \to +\infty$ as $\varepsilon \to 0^{+}$.
		Let 
		$$D_{n,\varepsilon}:= B_{r(\varepsilon)}(0) \setminus B_{r_0}(0) \quad 
		\text{and}
		\quad \partial D_{n,\varepsilon} = \partial B_{r(\varepsilon)}(0) \cup \delta B_{r_0}(0),$$
		where $\varepsilon>0$ is small enough and $r(\varepsilon)$ is large enough such that $B_{r_0}(0) \subsetneq B_{n}(\bar{\xi}) \subsetneq B_{r(\varepsilon)}(0)$.
		Since $v_{n} > 0$ on $\delta B_{r_0}(0)$, together with \eqref{2.1-b-h1-1}, we have some $t_{1}>1$ such that 
		$$ v_{n} \geq t_1 \underline{u}_1 \geq t_1 \underline{U}_\varepsilon \qquad \text{on } \delta B_{r_0}(0).$$
		In addition,
		$$ v_{n} \geq 0 \geq t_1 \underline{U}_\varepsilon \qquad \text{on } \partial B_{r(\varepsilon)}(0),$$
		and 
		$$
		L_{\mu}  v_{n} = 0 \geq    t_1 L_{\mu} \underline{U}_\varepsilon   \qquad \text{on } D_{n,\varepsilon}.$$
		By the comparison principle, 
		$$v_{n} \geq t_1\underline{U}_{\varepsilon}  \qquad \text{on } D_{n,\varepsilon}.$$
		Taking 
		$\varepsilon \to 0^+$,  we have 
		$$v_{n} \geq t_1\underline{u}_{1}  \qquad \text{on } \Z^d\setminus B_{r_0}(0).$$
		Since $\underline{u}_{1} \sim |x|^{\tau_-}$ as $|x| \to\infty$, taking $n\to +\infty$,  we have 
		$$\Phi_{d,\mu} \geq C^{-1}   |x|^{\tau_-}\qquad \text{on } \Z^d\setminus B_{r_0}(0).$$
		Combining the upper and lower estimates, we obtain
		\[
		C^{-1}|x|^{\tau_-}\le \Phi_{d,\mu}(x,\xi)
		\le C|x|^{\tau_-}\qquad \text{on } \Z^{d}\setminus B_{r_0}(0).
		\]
		Moreover, since we choose
		$r_0>2|\xi|+1$,
		we have, in the exterior region $|x|\ge r_0$,
		$|x-\xi|\asymp |x|.$
		Therefore, the above estimate can be equivalently rewritten as
		\[
		C^{-1}|x-\xi|^{\tau_-}
		\le
		\Phi_{d,\mu}(x,\xi)
		\le
		C|x-\xi|^{\tau_-}.
		\]
		This proves the asymptotic estimate \eqref{2.1-E-bf}.
		
		{\it The case $(b_2)$:  $H_0 \in C(\Z^d)$ verify  $(\cH_1)$ and $\mu = -1$.} 
		Compared with case $(b_1)$, the construction of the supersolution is exactly the same. However, in the critical case $\mu=-1$, the two characteristic exponents coincide,
		\[
		\tau_{+}=\tau_{-}=\frac{2-d}{2},
		\]
		and the previous construction of the subsolution is no longer applicable. 
		This degeneracy requires a modified construction of the subsolution. Define a smooth function $F$ on $\R_+$:
		$$F(t):= (1+t)^{\frac{1}{2}\tau_-} \big(\log(e+t)\big)^{\gamma},$$
		where $0<\gamma<1$, and set
		$$\chi(x) := F(|x|^2)=(1+|x|^2)^{\frac{1}{2}\tau_-} \big(\log(e+|x|^2)\big)^{\gamma}.$$
	Then, using $\tau_- = \frac{2-d}{2}$, we have
	\begin{align*}
		\Delta_{\Z^d} \chi(x) 
		&= \sum_{y\sim x} \chi(y) - \chi(x)\\
		&=\sum_{y\sim x} \big[(|y|^{2}-|x|^2))F'(|x|^2) + \frac{1}{2}(|y|^2-|x|^2)^2 F''(|x|^2)\big](1+o(1))\\
		&= \{
		-\tau_-(d+2-\tau_-)|x|^{\tau_- - 2} (\log(e+|x|^2))^{\gamma} \\
		&+4\gamma(\gamma-1)|x|^{\tau_{-} - 2} (\log(e+|x|^2))^{\gamma-2}\\
		&+\frac{1}{4}d\tau_{-}(\tau_{-}+2)|x|^{\tau_{-} - 4} (\log(e+|x|^2))^{\gamma}\\
		&-d\gamma(1-\tau_-) |x|^{\tau_{-} - 4} (\log(e+|x|^2))^{\gamma-1}\\
		&+d\gamma(\gamma-1) |x|^{\tau_{-} - 4} (\log(e+|x|^2))^{\gamma-2} \}(1+o(1)).
	\end{align*}
	Combining the above estimate with \eqref{ome-1}, we obtain, as $|x|\to\infty$,
	\begin{align*}
		(-\Delta-H_0)\chi(x)
		={}&-4\gamma(\gamma-1)|x|^{\tau_{-}-2}
		\bigl(\log(e+|x|^2)\bigr)^{\gamma-2} \\
		&\quad
		+O\left(
		|x|^{\tau_{-}-2-\epsilon_{0}}
		\bigl(\log(e+|x|^2)\bigr)^{\gamma}
		\right).
	\end{align*}
	Since $\gamma\in(0,1)$, we have
	$-4\gamma(\gamma-1)>0$. 
	Thus, $\chi$ provides the correction function required at infinity in the critical case.
	
	{\it The case $(a)$:  $H_0 \in C(\Z^d)$ verify  $(\cH_0)$ and $\mu > -1$.} 
	For every fixed $ \theta \in (0,\,\tau_+-\tau_-)$, we have $\beta_0(\tau_-+ \theta)>0$ and $\beta_0(\tau_-+ \theta)<0$.
	One can see that 
	\[\bar{u}':=t_{\theta}\psi_{\tau_-+ \theta}\]
	is a suitable supersolution, where $t_{\theta}>1$ is a constant depending on $\theta$. 
	In addition, for every $\varepsilon>0$, define
	\[
	\underline{U'}_\varepsilon:=\psi_{\tau_-- \theta}-\varepsilon\psi_{\tau_-+\eta_2},
	\]
	where $\eta_2 \in(0, \tau_+-\tau_-)$.
	Using $t'_{\theta}\underline{U'}_\varepsilon$ as the subsoltion and taking $\varepsilon \to 0^+$, one can obtain the desired lower bound estimate by a similar argument to the case $(b_1)$.
	It should be noted that the constants in the estimate inequalities are depending on $\theta$.

	When $\mu\geq0$, we set
	$$ \bar u_2(x)= \Phi_{d}(x-\xi), $$ 
	then
	\begin{align*} 
		\cL_\mu  \bar{u}_2(x)   &=\delta_{\xi} +\mu H_0  \bar u_2(x)\geq \delta_{\xi}.
	\end{align*}
	By comparison principle, we obtain
	\begin{align} \label{boun-2}
		\Phi_{d,\mu}(x,\xi)\leq  \Phi_d(x-\xi)\quad {\rm in}\ \, \Z^d. 
	\end{align}
	For $d\geq 3$, 
	\[
	\Phi_{d}(x-\xi) \leq C (1+|x-\xi|)^{2-d},
	\]
	thus (\ref{boun-2}) leads to (\ref{2.1-F-bf}).  \smallskip

	{\it Uniqueness:}  
	Let
	\(G(\cdot,\xi)\) be another positive solution of
	\[
	\cL_\mu G(\cdot,\xi)=\delta_\xi \qquad \text{in }\mathbb Z^d
	\]
	satisfying the same growth condition as \(\Phi_{d,\mu}(\cdot,\xi)\). Set
	\[
	h:=G(\cdot,\xi)-\Phi_{d,\mu}(\cdot,\xi).
	\]
	Then
	\[
	\cL_\mu h=0 \qquad \text{in }\mathbb Z^d.
	\]
	
	We shall compare \(h\) with a positive global supersolution. In the case
	\(\mu>-1\), choose \(\theta<\eta<\tau_+-\tau_-\), where
	\(\theta>0\) is chosen so that both \(G\) and \(\Phi_{d,\mu}\) are
	\(O((1+|x|)^{\tau_-+\theta})\). Then
	\[
	\cL_\mu \psi_{\tau_-+\eta}\ge0 
	\]
	outside a sufficiently large ball. In the critical case \(\mu=-1\), we
	instead take
	\[
	\chi(x)=\psi_{\tau_-}(x)(\log(e+|x|^2))^\gamma,\qquad 0<\gamma<1,
	\]
	for which 
	\[
	L_{-1}\chi\ge0 
	\]
	outside a sufficiently large ball.
	
	In either case, denote the corresponding exterior supersolution by \(\chi\). Let \(F\subset \Z^d\) be a finite set containing the ball outside of which \(\cL_\mu\chi\ge0\), and set
	\[
	q(x):=\sum_{z\in F}\Phi_{d,\mu}(x,z).
	\]
	Then $L_\mu q=1_F. $ Since $\chi$ is bounded in a finite set, we may choose \(M>0\) sufficiently large, such that the function
	\[
	\rho:=\chi+Mq
	\]
	satisfies
	\[
	\rho>0,\qquad L_\mu\rho\ge0
	\quad\text{in }\mathbb Z^d.
	\]
	Moreover, by the choice of \(\chi\), we have
	\[
	\frac{|h(x)|}{\rho(x)}\to0
	\qquad\text{as }|x|\to\infty.
	\]
	Then, for  every \(\varepsilon>0\), there exists \(R_\varepsilon>0\) such that
	\[
	|h(x)|\le \varepsilon \rho(x)
	\qquad\text{for } |x|\ge R_\varepsilon.
	\]
	For \(R>R_\varepsilon\), consider
	\[
	w_\pm:=\varepsilon\rho\pm h
	\]
	in the finite ball \(B_R(0)\). Since \(\cL_\mu h=0\) and \(\cL_\mu\rho\ge0\),
	we have
	\[
	\cL_\mu w_\pm=\varepsilon \cL_\mu\rho\ge0
	\qquad\text{in }B_R(0).
	\]
	Furthermore, by the choice of \(R_\varepsilon\),
	\[
	w_\pm\ge0 \quad\quad\text{on }\partial B_R(0).
	\]
	The comparison principle
	yields
	\[
	w_\pm\ge0
	\qquad\text{in }B_R(0).
	\]
	Letting \(R\to+\infty\), we get
	\[
	-\varepsilon\rho\le h\le \varepsilon\rho
	\qquad\text{in }\mathbb Z^d.
	\]
	Letting \(\varepsilon\to0^+\), we obtain \(h\equiv0\). Hence
	\[G(\cdot,\xi)=\Phi_{d,\mu}(\cdot,\xi),\]
	which proves the uniqueness.

	\begin{corollary}\label{cr comp}
		Let the assumptions of  Theorem  \ref{pr fun-fw} hold and  $\Omega$ be a connected {\bf infinite set} of $\Z^d$.   
		Let $u: \Omega\cup\partial\Omega\to \R$ be a function verifying 
		\begin{equation}\label{eq 2.1 cm-ub}
			\left\{\arraycolsep=1pt
			\begin{array}{lll}
			\ \	\cL_\mu  u     \geq  0   \quad
				{\rm on}\ \  \Omega , \\[2mm]
				\phantom{  -\  }
				\ \ u\geq 0 \quad  {\rm   in}\ \,      \partial\Omega, 
				\\[2mm]
				\displaystyle\liminf_{\substack{x\in\Omega \\|x|  \to+\infty}} \frac{u(x)}{\Phi_{d,\mu}(x,0)}\geq0. 
			\end{array}
			\right.
		\end{equation}
		Then $u\geq 0$ in $\Omega$. Furthermore, either $u\equiv 0$ in $\Omega$ or  $u>0$ in $\Omega$. 
		
		In particular, if $$\Phi_{d,\mu}(x,0) \asymp |x|^{\tau_-(\mu)}\quad \quad \text{as } |x|\to\infty,$$then the last condition may be replaced by 
		\[
		\liminf_{\substack{x\in\Omega\\ |x|\to\infty}}
		u(x)|x|^{-\tau_-(\mu)}\ge0.
		\]
		If $\Omega=\Z^d$, we omit the boundary condition.
	\end{corollary}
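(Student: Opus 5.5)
The plan is to compare $u$ with $-\varepsilon\Phi_{d,\mu}(\cdot,0)$ on finite truncations of $\Omega$, apply the finite-domain comparison principle Lemma \ref{com lm}, and then let the truncation radius and $\varepsilon$ tend to their limits. The starting point is positivity of the kernel. By Theorem \ref{pr fun-fw}, $\Phi_{d,\mu}(\cdot,0)>0$ on $\Z^d$ and $\cL_\mu \Phi_{d,\mu}(\cdot,0)=\delta_0\geq 0$. Also $\mu H_0+\overline H_0>0$ on $\Z^d$, exactly as in (\ref{positv}), so Lemma \ref{com lm} applies on every finite set.

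Fix $\varepsilon>0$ and set $w_\varepsilon:=u+\varepsilon\Phi_{d,\mu}(\cdot,0)$ on $\Omega\cup\partial\Omega$. Then $\cL_\mu w_\varepsilon\geq 0$ in $\Omega$ and $w_\varepsilon\geq 0$ on $\partial\Omega$, since $u\ge0$ there and $\Phi_{d,\mu}>0$. The liminf condition gives $u(x)\geq -\frac{\varepsilon}{2}\Phi_{d,\mu}(x,0)$ for $x\in\Omega$ with $|x|\geq R_\varepsilon$. Hence $w_\varepsilon\geq \frac{\varepsilon}{2}\Phi_{d,\mu}(x,0)>0$ there.

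For $R>R_\varepsilon$ take the finite set $\Omega_R:=\Omega\cap B_R(0)$. Its exterior boundary is contained in $\partial\Omega\cup(\Omega\setminus B_R(0))$, and $w_\varepsilon\geq 0$ on both parts. Lemma \ref{com lm} needs only finiteness of the domain for the nonnegativity conclusion, not connectedness, so it gives $w_\varepsilon\ge0$ in $\Omega_R$. Letting $R\to\infty$ yields $u\geq -\varepsilon\Phi_{d,\mu}(\cdot,0)$ in $\Omega$. Since $\Phi_{d,\mu}(x,0)$ is finite at each point, letting $\varepsilon\to0^+$ gives $u\geq0$ in $\Omega$.

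For the dichotomy, suppose $u(x_0)=0$ at some $x_0\in\Omega$. Since $u\geq 0$ on $\Omega\cup\partial\Omega$,
$$0\leq \cL_\mu u(x_0)=-\sum_{y\sim x_0}u(y)\leq 0 .$$
So $u$ vanishes at every neighbour of $x_0$, including those in $\partial\Omega$. Propagating this along paths inside the connected set $\Omega$ gives $u\equiv0$ in $\Omega$.

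For the particular case, assume $\Phi_{d,\mu}(x,0)\asymp|x|^{\tau_-}$. Given $\liminf u|x|^{-\tau_-}\ge0$, for large $|x|$ either $u(x)\ge0$ or
$$u(x)/\Phi_{d,\mu}(x,0)\geq -C\,\big(u(x)|x|^{-\tau_-}\big)_-\to0 .$$
Hence the liminf condition in (\ref{eq 2.1 cm-ub}) holds. When $\Omega=\Z^d$ we have $\partial\Omega=\emptyset$, so the boundary condition is vacuous.

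I expect the main obstacle to be the boundary bookkeeping on the truncated sets: one must check that every point of $\partial\Omega_R$ lies either in $\partial\Omega$ or outside $B_R(0)$, where the liminf bound is in force. The rest follows directly from Lemma \ref{com lm}.
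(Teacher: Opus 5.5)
Your proposal is correct and follows the paper's own proof: the same perturbation $w_\varepsilon=u+\varepsilon\Phi_{d,\mu}(\cdot,0)$, the finite comparison principle of Lemma~\ref{com lm} on $\Omega\cap B_R(0)$, the limits $R\to\infty$ and $\varepsilon\to0^+$, and the same propagation of zeros through the connected set $\Omega$ for the dichotomy. You are more explicit than the paper about the inclusion $\partial(\Omega\cap B_R(0))\subset\partial\Omega\cup(\Omega\setminus B_R(0))$ and about the reduction in the ``in particular'' clause; there, your $(\cdot)_-$ has to mean $\max\{0,-\cdot\}$, not the paper's convention $a_-=-\max\{0,-a\}$.
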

	\noindent
	{\bf Proof.} 
	Fix \(\varepsilon>0\) and set
	\[
	w_\varepsilon(x):=u(x)+\varepsilon\Phi_{d,\mu}(x,0).
	\]
	Since
	\[
	\cL_\mu \Phi_{d,\mu}(\cdot,0)=\delta_{0}\ge0,
	\]
	we have
	\[
	\cL_\mu w_\varepsilon\ge0
	\qquad \text{in }\Omega.
	\]
	Moreover, the condition at infinity implies that \(w_\varepsilon\ge0\) on
	\(\Omega\setminus B_R(0)\) for \(R\) sufficiently large. Together with
	\(u\ge0\) on \(\partial\Omega\) and \(\Phi_{d,\mu}>0\), this gives
	\(
	w_\varepsilon\ge0
	\text{ on the boundary layer of } \Omega\cap B_R(0).
	\)
	Applying the finite-domain comparison principle Lemma \ref{com lm} to $w_{\epsilon}$ in \(\Omega\cap B_R(0)\), and
	then letting \(R\to\infty\), we obtain
	\[
	u(x)\ge -\varepsilon\Phi_{d,\mu}(x,0)
	\qquad \text{for all }x\in\Omega.
	\]
	Letting \(\varepsilon\to0^+\), we conclude that \(u\ge0\) in \(\Omega\).
	
	Finally, if \(u(x_1)=0\) for some \(x_1\in\Omega\), then
	\[
	0\le \cL_\mu u(x_1)
	=
	-\Delta u(x_1)
	=
	-\sum_{y\sim x_1}u(y)\le0,
	\]
	and hence \(u(y)=0\) for all \(y\sim x_1\). By connectedness, the zero set
	propagates through \(\Omega\). Thus either \(u\equiv0\) in \(\Omega\), or
	\(u>0\) in \(\Omega\). \hfill$\Box$\medskip
	\setcounter{equation}{0}
	\section{Poisson problem in $\Z^d$}
	In this section, we give a classification of the nonnegative solutions of Poisson problem in $\Z^d$. 
	\subsection{Existence}
	
	This subsection starts from the existence of  the nonnegative solutions of Poisson problem in $\Z^d$. 
	\begin{proposition}\label{pr 3.1}
		Assume that  
		$ \mu \geq -1$,  $H_0 \in C(\Z^d)$ verifies  $(\cH_1)$, 
			$$    H_0<\overline{H}_0\quad{\rm in}\ \Z^d\ \,{\rm if}\ \mu=-1. $$   		
		  
		Let   $g\geq 0$    be a nonzero function such that  
		\begin{equation}
			\sum_{ x\in\Z^d}g(x)(1+|x|)^{\tau_-}  <+\infty ,
		\end{equation}
		then the Poisson problem
		\begin{equation}\label{eq 3.1-poisson}
			\left\{\arraycolsep=1pt
			\begin{array}{lll}
				\cL_\mu  u    =  g   \quad
				&{\rm in}\ \  \Z^d , \\[2mm]
				\phantom{  -\ }
				\displaystyle  u    \geq 0 &{\rm in}\ \  \Z^d
			\end{array}
			\right.
		\end{equation}
		has a minimal  positive solution  $v_g$.  
		
		Furthermore,  if $\mu>-1$,  
		suppose that, for  some $c_0\geq 1$, $R_{g}\geq1$, and $\tau\in(\tau_- ,\tau_+ )$, 
		\begin{align}\label{g bound1}
			\frac1{c_0} (1+|x|)^{\tau-2} \leq  g(x)\leq c_0(1+|x|)^{\tau-2},\quad \  |x|\geq R_{g},
		\end{align}
		then, for some $c\geq 1$,
		\begin{equation}\label{eq 2.1-homxxx}
			\frac1c(1+ |x|)^\tau \leq v_g(x)\leq c(1+ |x|)^\tau,\quad\ \forall \, x\in \Z^d .
		\end{equation}
		In particular, $\displaystyle \lim_{|x|\to \infty}v_{g}(x) = 0$ when $\tau <0$.
	\end{proposition}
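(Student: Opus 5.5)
The plan is to build $v_g$ as the Green potential
\[
v_g(x)=\sum_{\xi\in\Z^d}\Phi_{d,\mu}(x,\xi)g(\xi),
\]
obtained as a monotone limit of truncated problems, and then to get the two-sided bounds from explicit power barriers together with Corollary \ref{cr comp}.

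\textbf{Existence and minimality.} For each $n$ let $g_n=g\,1_{B_n(0)}$. Define
\[
w_n(x)=\sum_{\xi\in B_n(0)}\Phi_{d,\mu}(x,\xi)g(\xi).
\]
This is a finite sum. By Theorem \ref{pr fun-fw}(b) it solves $\cL_\mu w_n=g_n$, and it is positive and increasing in $n$.

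To pass to the limit I need a bound on $\Phi_{d,\mu}(x,\xi)$ in $\xi$ for fixed $x$ that is uniform in $\xi$. I would use the symmetry $\Phi_{d,\mu}(x,\xi)=\Phi_{d,\mu}(\xi,x)$. It should follow from the symmetry of the Dirichlet approximations $v_n(x,\xi)=v_n(\xi,x)$, which is the symmetry of the finite matrix $\cL_\mu$ restricted to $B_n(\bar\xi)$; I take $\bar\xi$ fixed, say $0$, and note that the limit does not depend on the exhausting sequence, by minimality.

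Then estimate \eqref{2.1-E-bf} with the roles swapped gives
\[
\Phi_{d,\mu}(x,\xi)\le C_x(1+|\xi|)^{\tau_-}
\]
for $|\xi|$ large. This is the main point to check: the constant in Theorem \ref{pr fun-fw} must be controlled uniformly once the pole is fixed at $x$ and $\xi$ varies. That holds because, with pole $x$, it is just the decay of the single function $\Phi_{d,\mu}(\cdot,x)$.

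Hence
\[
w_n(x)\le C_x\sum_{\xi} g(\xi)(1+|\xi|)^{\tau_-}<\infty,
\]
so $w_n\uparrow v_g$ pointwise. Since $\cL_\mu$ is a finite-neighbour operator, $\cL_\mu v_g=g$. Positivity follows from $g\gneqq0$ together with the strong maximum principle part of Corollary \ref{cr comp}.

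For minimality, let $u\ge0$ be any solution. Then $u-w_n$ satisfies $\cL_\mu(u-w_n)=g-g_n\ge0$, with $u\ge0$ and $w_n\le C(1+|x|)^{\tau_-}$ at infinity. The latter holds because $w_n$ is a finite combination of Green kernels, by \eqref{2.1-E-bf}. So
\[
\liminf \frac{u-w_n}{\Phi_{d,\mu}(\cdot,0)}\ge -\limsup\frac{w_n}{\Phi_{d,\mu}(\cdot,0)}.
\]
This lim sup is finite but not necessarily $\le 0$, so I would instead compare with the Dirichlet approximations: $u\ge w_{n,m}$, where $w_{n,m}$ solves $\cL_\mu w=g_n$ in $B_m$ with zero exterior data, by Lemma \ref{com lm}. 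Letting $m\to\infty$ gives $u\ge w_n$, and then $u\ge v_g$.

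\textbf{Two-sided bounds for $\mu>-1$.} Use \eqref{est-1-im} with $\sigma=\tau\in(\tau_-,\tau_+)$, where $\beta_0(\tau)>0$:
\[
\cL_\mu\psi_\tau=\beta_0(\tau)|x|^{\tau-2}(1+o(1)).
\]
So for large $|x|$, $c_1\psi_\tau$ is a supersolution and $c_2\psi_\tau$ a subsolution of $\cL_\mu u=g$, by \eqref{g bound1}.

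For the upper bound, take $\bar u=A\psi_\tau+Mq$ with $q=\sum_{z\in F}\Phi_{d,\mu}(\cdot,z)$ as in the uniqueness proof, with $F$ a large ball. This gives $\cL_\mu\bar u\ge g$ on all of $\Z^d$, since $g$ is bounded on $F$. Comparing with the truncated Dirichlet solutions via Lemma \ref{com lm} gives $v_g\le\bar u\le c(1+|x|)^\tau$, because $\tau>\tau_-$ makes $q$ negligible.

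For the lower bound, compare on the exterior $\{|x|>r_0\}$. Use $v_g>0$ on the inner boundary layer, so $v_g\ge t\psi_\tau$ there for small $t$, and $\cL_\mu(v_g-t\psi_\tau)\ge0$. At infinity I would use Corollary \ref{cr comp}, whose growth condition needs
\[
\liminf\frac{v_g-t\psi_\tau}{\Phi_{d,\mu}(\cdot,0)}\ge0 .
\]
This fails directly since $\tau>\tau_-$. So I would instead reuse the $\varepsilon$-truncation trick from the subsolution argument: replace $\psi_\tau$ by $\psi_\tau-\varepsilon\psi_{\tau'}$ with $\tau<\tau'<\tau_+$, which is negative far out and still a subsolution, then let $\varepsilon\to0$. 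This step, making the exterior comparison work when the barrier decays slower than $\Phi_{d,\mu}$, is the main obstacle. Finally, $\tau<0$ gives $v_g\to0$.
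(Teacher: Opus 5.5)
Your proof is correct. The two-sided bounds follow the paper: your $A\psi_\tau+M\sum_{z\in F}\Phi_{d,\mu}(\cdot,z)$ is its $U_0$, and your $t\psi_\tau-\varepsilon\psi_{\tau'}$ is its $U_1-\varepsilon\psi_{\tau_+}$. Your choice $\tau'\in(\tau,\tau_+)$ is the safer one. Since $\beta_0(\tau')>0$, the sign of $\cL_\mu\psi_{\tau'}$ far out is fixed, whereas $\beta_0(\tau_+)=0$ leaves the sign of $\cL_\mu\psi_{\tau_+}$ to the error term.

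The existence step takes a genuinely different route. The paper keeps the full datum and uses the Dirichlet solutions $v_{n,g}$ on $B_n$, tested against $\Phi_{d,\mu}(\cdot,\xi_0)$ via Lemma \ref{Green formula}. The boundary term is nonnegative, so $v_{n,g}(\xi_0)\le\sum_y g(y)\Phi_{d,\mu}(y,\xi_0)$. This sum is finite by the decay of $\Phi_{d,\mu}(\cdot,\xi_0)$ in its first variable. No symmetry of the kernel is needed, and minimality is a one-line comparison on $B_n$ via Lemma \ref{com lm}.

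You truncate the datum instead. This forces you to establish $\Phi_{d,\mu}(x,\xi)=\Phi_{d,\mu}(\xi,x)$, which you do correctly from the symmetry of the finite Dirichlet matrices and the exhaustion-independence of the minimal kernel. It also forces a double limit $m\to\infty$, $n\to\infty$ for minimality.

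Both arguments rest on the self-adjointness of $\cL_\mu$. The paper's is shorter. Yours additionally yields the representation $v_g=\sum_\xi\Phi_{d,\mu}(\cdot,\xi)g(\xi)$ and the behaviour of the kernel in the pole variable. The paper later relies on both, without separate proof, in Proposition \ref{pr 3.2}.
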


	\noindent{\bf Proof. }\quad {\it Uniqueness. } It follows by the Comparison principle Corollary \ref{cr comp}. \smallskip \\
	\noindent {\it Existence and upper bound. }   Denote by
	$v_{n,g}$ the solution of 
	\begin{equation}\label{eq 3.1-pn}
		\left\{\arraycolsep=1pt
		\begin{array}{lll}
			\cL_\mu   u    =  g   \quad
			&{\rm in}\ \  B_n , \\[2mm]
			\phantom{ -\  }
			u=0 \quad &{\rm   in}\ \ \,    \Z^d\setminus B_n. 
		\end{array}
		\right.
	\end{equation}
	The existence could follow by the variational method and the solution is positive in $B_n$ by the maximum principle.   
	Moreover, it follows by the comparison principle that  the mapping $n\mapsto v_{n,g}$ is strictly increasing. 
	
	In order to pass to the limit of the solutions $\{v_{n,g}\}_n$, we need a suitable control for $v_{n,g}$\\
	{ \it Upper bound: } For any $\xi_0\in\Z^d$, for $n$ large enough,   use $\Phi_{\xi_0}$ as a test function,  (\ref{eq 3.1-pn})
	implies that 
	\begin{align*}
		+\infty> \sum_{B_n}   g(y)\Phi_{d,\mu}(y,\xi_0) &=\sum_{y\in B_n}   \Phi_{d,\mu}(y,\xi_0) \cL_\mu  v_{n,g}  (y) 
		\\&=\sum_{y\in B_n}  v_{n,g}(y) \cL_\mu \Phi_{d,\mu} (y,\xi_0)    + \sum_{\substack{x\in B_n,\; y\in\partial B_n\\ x\sim y}} v_{n,g}(x)\, \Phi_{d,\mu} (y,\xi_0)
		\\& \geq \sum_{y\in \Z^d}   v_{n,g}(y) \cL_\mu \Phi_{d,\mu} (y,\xi_0) 
		\\&=v_{n,g}(\xi_0)
	\end{align*}
	by Lemma \ref{Green formula}.  Thus, we have that 
	$$0\leq v_{n,g}(\xi_0)\leq \sum_{y\in\Z^d}   g(y)\Phi_{\xi_0}(y,\xi_0)<+\infty,  $$
	which, by the arbitrary of $\xi_0$,  guarantees that the limit of $\{v_{n,g}\}_n$ exists,  
	$$v_\infty(x):=\lim_{n\to+\infty} v_{n,g}(x)\leq U_0(x),\quad\forall\,  x\in\Z^d. $$ 
	Hence $v_\infty$ is a positive solution of (\ref{eq 3.1-poisson}). 
	
	Furthermore, if (\ref{eq 3.1-poisson})  has a  positive solution $u$, then $u$ is an upper bound for $v_{n,g}$, 
	then $v_\infty\leq u$. So $v_\infty$ is a minimal solution. 
	
	\smallskip
	{\it Asymptotic behavior for $\mu>-1$. }
	Note that  $ \psi_\tau(x) = \varphi_\tau (|x|^2)$ for $x\in\Z^d$,  by (\ref{est-1}) with $\sigma=\tau\in(\tau_-,\tau_+)$,
	then for some $n_0\geq 1$ such that 
	$$\cL_\mu \psi_\tau(x)\geq \frac12\beta_0(\tau)(1+|x|^2)^{\frac{\tau}2-1}\quad{\rm for} \ |x|\geq n_0,$$
	where $\beta_0(\tau)>0$ due to $\tau\in(\tau_-,\tau_+)$. 
	Moreover, there exists $c_{n_0}>0$ such that
	$$\big| \cL_\mu \psi_\tau(x)\big| \leq  c_{n_0} \quad{\rm for} \ |x|<n_0. $$
	Let 
	\begin{equation}\label{eq 3.1-0}
		w_1(x)=\sum_{y\in\Z^d}\Phi_{d,\mu}(x,y) 1_{B_{n_0}(0)}(y),\quad\forall\, x\in\Z^d
	\end{equation}
	for some $n_0>1$, 
	then 
	\begin{equation}\label{eq 3.1-00}
		\frac1c(1+ |x|)^{\tau_-}\leq  w_1(x) \leq c(1+ |x|)^{\tau_-},\quad\forall\, x\in\Z^d.   
	\end{equation}
	
	Let 
	$$U_0=\frac{2c_0}{\beta_0(\tau)}\big(\psi_\tau+t_0 w_1\big)\quad {\rm in}\ \, \Z^d,$$
	where $t_0>0$ is  large  such that 
	$$\cL_\mu U_0(x)\geq  \frac12c_0 (1+|x|^2)^{\frac{\tau}2-1},\quad\forall\, x\in\Z^d.  $$

	By the Comparison principle, we have that 
	$$0<v_{n,g}< U_0\quad{\rm in}\ \, B_n$$
	and $v_\infty\leq U_0$ in $\Z^d$.

	\smallskip
	
	{\it Lower bound. } Let 
	$$U_1(x)= t_1 \psi_\tau (x),\quad\forall\,  x\in\Z^d, $$
	where  $t_1>0$ is such that 
	$$U_1\leq v_\infty\quad {\rm in}\ B_{n_0} $$
	and  for the choice of $n_0$, we have that 
	$$\cL_\mu U_1(x)\leq  \frac12t_1 \beta_0(\tau) (1+|x|^2)^{\frac{\tau}2-1}\leq \frac1c(1+|x|^2)^{\frac{\tau}2-1} ,\quad\forall\, |x|\geq n_0.  $$
	Now we construct sub solution of  (\ref{eq 3.1-poisson}). For $0<\epsilon\ll 1$, 
	then 
	$$\cL_\mu \big(U_1-\epsilon\psi_{\tau_+}\big) (x)\leq   
	\frac1c(1+|x|^2)^{\frac{\tau}2-1} ,\quad\forall\, |x|\geq n_0  $$
	and there exists $n_\epsilon>n_0$ such that $\big(U_1-\epsilon\psi_{\tau_+}\big)\leq 0$ in $\Z^d\setminus B_{n_\epsilon}$. 
	Therefore, comparison principle leads to 
	$$v_\infty\geq \big(U_1-\epsilon\psi_{\tau_+}\big)\quad{\rm in}\ \, \Z^d\setminus B_{n_0}. $$
	Passing to the limit as $\epsilon\to0^+$, we obtain that 
	$$v_\infty\geq U_1 \quad{\rm in}\ \, \Z^d\setminus B_{n_0}. $$
	The lower bound holds.   \hfill$\Box$\medskip

	\begin{corollary}\label{existence poisson}
		Let the assumptions of  Proposition \ref{pr 3.1} hold.

		$(i)$ Let
		$g\in C (\Z^d)$    be a nonnegative nonzero function
		and 
		the Poisson problem (\ref{eq 3.1-poisson}) has a positive minimal solution $v_g$, 
		then there exists $c>0$ such that 
		$$v_g(x)\geq c(1+|x|)^{\tau_-}\quad {\rm in}\ \Z^d. $$

		$(ii)$ Let
		$g\in C (\Z^d)$    be a nonnegative nonzero function such that
		\begin{align}\label{g bound1-upp}
			g(x)\leq c_0(1+|x|)^{\tau-2},\quad \ \forall \, x\in \Z^d
		\end{align}
		for     $\tau<\tau_+$ and $c_0\geq 1$.

		Then the Poisson problem (\ref{eq 3.1-poisson})
		has a minimal positive solution $v_g$ such that  there exists $c\geq 1$  such that either
		\begin{equation}\label{eq 2.1-homxxx-upp}
			v_g(x)\leq c(1+ |x|)^ {\tau },\quad\ \forall \, x\in \Z^d\qquad {\rm if}\ \mu>-1,\ \tau\in(\tau_-, \tau_+)
		\end{equation}
		or 
		\begin{equation}\label{eq 2.1-homxxx-upp1}
			v_g(x)\leq c(1+ |x|)^ {\tau_- },\quad\ \forall \, x\in \Z^d\qquad {\rm if}\ \mu\geq -1,\ \tau<\tau_-. 
		\end{equation}
	\end{corollary}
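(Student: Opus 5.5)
For part $(i)$ I will compare $v_g$ with a multiple of the function $w_1$ from \eqref{eq 3.1-0}, which is comparable to $(1+|x|)^{\tau_-}$ on $\Z^d$; for $\mu=-1$ I take the analogue built from the fundamental solutions, whose two-sided bound \eqref{2.1-E-bf} comes from Theorem \ref{pr fun-fw}(b).

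The first step is to show that $v_g$ dominates a single fundamental solution. Since $g\gneqq0$, fix $x_0$ with $g(x_0)>0$. Then $\cL_\mu\big(v_g-g(x_0)\Phi_{d,\mu}(\cdot,x_0)\big)=g-g(x_0)\delta_{x_0}\ge0$ in $\Z^d$. The cleanest way to conclude is the construction in the proof of Proposition \ref{pr 3.1}: $v_g=\lim_n v_{n,g}$, and $v_{n,g}\ge g(x_0)v_n(\cdot,x_0)$ on $B_n$ by Lemma \ref{com lm}, because both functions vanish outside $B_n$. Passing to the limit gives $v_g\ge g(x_0)\Phi_{d,\mu}(\cdot,x_0)$. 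By Theorem \ref{pr fun-fw}, $\Phi_{d,\mu}(x,x_0)\ge C^{-1}|x-x_0|^{\tau_-}$ for $|x|$ large. Under $(\cH_1)$ this holds with exact exponent in all cases $\mu\ge-1$, by \eqref{2.1-E-bf}. On the remaining finite set, $v_g>0$ by the strong maximum principle in Corollary \ref{cr comp}. Together these give $v_g\ge c(1+|x|)^{\tau_-}$ on $\Z^d$.

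For $(ii)$ I split into two cases.

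\emph{Case $\mu>-1$, $\tau\in(\tau_-,\tau_+)$.} I reuse the supersolution $U_0=\frac{2c_0}{\beta_0(\tau)}(\psi_\tau+t_0w_1)$ from Proposition \ref{pr 3.1}. That part of the proof used only the upper bound on $g$. It gives $\cL_\mu U_0\ge g$, which yields $v_{n,g}\le U_0$ on $B_n$ and hence $v_g\le c(1+|x|)^\tau$.

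\emph{Case $\tau<\tau_-$, $\mu\ge-1$.} Here $g\le c_0(1+|x|)^{\tau-2}$ implies $\sum_x g(x)(1+|x|)^{\tau_-}<\infty$, because $\tau+\tau_--2<2\tau_--2\le-d$ (recall $\tau_-\le(2-d)/2$). So Proposition \ref{pr 3.1} gives existence. For the bound, pick $\sigma\in(\tau,\tau_-)$ with $\sigma>\tau_--\epsilon_0/2$ and set
\[
\bar U=A\big(\psi_{\tau_-}-\psi_{\sigma}\big)+M w_1 .
\]
By \eqref{est-1-im}, $\cL_\mu\bar U\ge -A\beta_0(\sigma)|x|^{\sigma-2}(1+o(1))$ for $|x|$ large. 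Since $-\beta_0(\sigma)>0$ for $\sigma$ outside $[\tau_-,\tau_+]$ and $\sigma>\tau$, this dominates $c_0(1+|x|)^{\tau-2}$ outside a large ball $B_{n_0}$. Inside $B_{n_0}$, I use $\cL_\mu w_1=1_{B_{n_0}}$ and take $M$ large to absorb both $g$ and the possibly negative values of $A\,\cL_\mu(\psi_{\tau_-}-\psi_\sigma)$ there; positivity of $\bar U$ is ensured the same way. Comparison on $B_n$ then gives $v_{n,g}\le\bar U$, hence $v_g\le c(1+|x|)^{\tau_-}$.

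The main obstacle is this last construction. The exponent $\sigma$ must satisfy both $\sigma>\tau$ and $\tau_--\sigma<\epsilon_0$, so that the $O(|x|^{\tau_--2-\epsilon_0})$ error in \eqref{est-1-im} stays below the leading term. If $\tau$ is very negative I simply take $\sigma$ close to $\tau_-$, which only requires $\sigma>\tau$ and is always possible. For $\mu=-1$, the function $w_1$ must be defined through $\Phi_{d,-1}$, which exists only under the hypothesis $H_0<\overline H_0$ that is assumed.
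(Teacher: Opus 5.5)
Your proof is correct and follows the paper's argument. In $(i)$ you compare $v_g$ with $g(x_0)\Phi_{d,\mu}(\cdot,x_0)$ and invoke the lower bound \eqref{2.1-E-bf}; in $(ii)$ you reuse $U_0$ from Proposition \ref{pr 3.1} when $\tau\in(\tau_-,\tau_+)$, and when $\tau<\tau_-$ you use the same barrier as the paper, $t(\psi_{\tau_-}-\psi_{\sigma}+t_1w_1)$ with $\sigma\in(\max\{\tau,\tau_--\frac12\varepsilon_0\},\tau_-)$. The only difference is that in $(i)$ you justify the comparison through the approximations $v_{n,g}\ge g(x_0)v_n(\cdot,x_0)$ on $B_n$, whereas the paper cites Corollary \ref{cr comp} directly; your route is the more careful one, because the liminf condition at infinity that Corollary \ref{cr comp} requires for $v_g-g(x_0)\Phi_{d,\mu}(\cdot,x_0)$ is not known a priori.
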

	\noindent{\bf Proof. } $(i)$ Let  $g(x_0)>0$ for some $x_0\in\Z^d$,  then 
	$\Phi_{d,\mu}(\cdot,x_0)$ verifies that 
	$$
	\left\{\arraycolsep=1pt
	\begin{array}{lll}
		\quad\ \ \cL_\mu   u    =   \delta_{x_0}   \quad
		&{\rm in}\ \  \Z^d , \\[2mm]
		\phantom{   }
		\displaystyle \lim_{  |x|\to+\infty}u(x)= 0,
	\end{array}
	\right.
	$$
	then the comparison principle Corollary \ref{cr comp} implies that 
	$$v_g(x)\geq  g(x_0) \Phi_{d,\mu}(x,x_0)\quad {\rm for}\ x\in\Z^d. $$

	$(ii)$ The existence and upper bound (\ref{eq 2.1-homxxx-upp}) follows by the proof of Proposition \ref{pr 3.1}. 
	When  $\mu\geq -1,\ \tau<\tau_-$, we can construct a proper bound as follows. 
	
	Recall that  $ \psi_\tau(x) = \varphi_\tau (|x|^2)$ for $x\in\Z^d$,  by (\ref{est-1}) with $\sigma=\tau_1\in(\max\{\tau,\tau_--\frac12\varepsilon_0\},\tau_-)$,
	then for some $n_0\geq 1$ such that 
	$$\cL_\mu  \psi_\tau(x)\leq  2\beta_0(\tau)(1+|x|^2)^{\frac{\tau}2-1}\quad{\rm for} \ |x|\geq n_0$$
	and
	$$\cL_\mu  \psi_{\tau_-}(x)= O((1+|x|^2)^{\frac{\tau_--3}2}) \quad{\rm for} \ |x|\geq n_0,$$
	where $\beta_0(\tau)<0$ due to $\tau\in(\tau_-,\tau_+)$. 
	Moreover, there exists $c_{n_0}>0$ such that
	$$\Big|\cL_\mu   \psi_\tau(x)\Big| \leq  c_{n_0} \quad{\rm for} \ |x|<n_0. $$
	Let $w_1$ defined as \eqref{eq 3.1-0}
	and
	$$\bar U_t=t\big( \psi_{\tau_-} - \psi_{\tau_1} +t_1w_1\big) \quad {\rm in}\ \, \Z^d,$$
	where $t, t_1>0$ is  such that $\bar U_0>0$ in $\Z^d$, which is possible since $\tau_1<\tau_-$. 
	Now we have that 
	$$\cL_\mu   \bar U_t(x)\geq -2 \beta_0(\tau_1) (1+|x|^2)^{\frac{\tau_1}2-1},\quad\forall\, x\in\Z^d,  $$
	which, by comparison principle,  leads an upper bound (\ref{eq 2.1-homxxx-upp1}) by choosing suitable $t>0$,  thanks to $\beta_0(\tau_1)<0$.   \hfill$\Box$\medskip

	\subsection{Non-existence}
	
	In this subsection, we show that the non-existence for the Poisson problem if the non-homogeneous term
	is not admissible. 
	
	\begin{proposition}\label{pr 3.2} 
		Assume that  $d\geq 3$,  $ \mu \geq -1$,  $H_0 \in C(\Z^d)$ verifies  $(\cH_1)$. 
		Let $f\in C(\Z^d)$  verify \eqref{con 2.1-1},
		then the homogeneous problem
		\begin{equation}\label{eq 1.1 EH}
			\left\{\arraycolsep=1pt
			\begin{array}{lll}
				\cL_\mu    u     \geq  f   \quad
				&{\rm in}\ \  \Z^d  , \\[2mm]
				\phantom{ -\  }
				u\geq 0 \quad &{\rm   in}\ \ \,    \Z^d
			\end{array}
			\right.
		\end{equation}
		has no    solutions.

		In particular,  (\ref{con 2.1-1}) can be replaced by
		\begin{equation}\label{con 2.1-1-asy}
			\liminf_{x\in\Z^d,\,|x|\to+\infty} f(x)|x|_{_Q}^{2 -\tau_+} >0.
		\end{equation} 
	\end{proposition}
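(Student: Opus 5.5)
Argue by contradiction: assume $u$ solves (\ref{eq 1.1 EH}) with $f$ satisfying (\ref{con 2.1-1}), and test the inequality against the fixed-pole Green kernel $\Phi_{d,\mu}(\cdot,\xi_0)$. Theorem \ref{pr fun-fw}$(b)$ gives the two-sided bound $\Phi_{d,\mu}(x,\xi_0)\asymp(1+|x|)^{\tau_-}$; this is where $(\cH_1)$ is used. Since $u$ is a nonnegative supersolution, the mechanism is the reverse of the upper bound in Proposition \ref{pr 3.1}: the representation $u\ge \sum_y \Phi_{d,\mu}(\cdot,y)f(y)$ will force $u(\xi_0)=+\infty$.

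\emph{Step 1 (truncated problems).} Let $v_n$ solve $\cL_\mu v_n=f\,1_{B_n}$ in $B_n$ with $v_n=0$ outside $B_n$; $v_n$ exists by the variational method of Lemma \ref{lm fun-bf-1}. Then $\cL_\mu(u-v_n)\ge0$ in $B_n$ and $u-v_n\ge0$ on $\partial B_n$, so Lemma \ref{com lm} gives $u\ge v_n$ for every $n$.

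\emph{Step 2 (lower bound for $v_n(\xi_0)$).} Let $G_n(\cdot,\xi_0)$ be the Dirichlet Green function of $B_n$ with pole $\xi_0$, i.e.\ $v_n(\cdot,\xi_0)$ from Lemma \ref{lm fun-bf-1}. By symmetry of the finite-dimensional operator, or by applying Lemma \ref{Green formula} with both functions vanishing on $\partial B_n$ so that no boundary term appears,
$$v_n(\xi_0)=\sum_{y\in B_n}G_n(y,\xi_0)f(y).$$
By Lemma \ref{lm fun-bf-1}, $G_n\uparrow\Phi_{d,\mu}(\cdot,\xi_0)$. Monotone convergence then gives
$$u(\xi_0)\ \ge\ \liminf_{n}v_n(\xi_0)\ \ge\ \sum_{y\in\Z^d}\Phi_{d,\mu}(y,\xi_0)f(y)\ \ge\ c\sum_{y}(1+|y|)^{\tau_-}f(y)=+\infty,$$
which is the desired contradiction. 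Only this last inequality needs the lower bound $\Phi_{d,\mu}\ge c(1+|y|)^{\tau_-}$. That bound holds for large $|y|$ by Theorem \ref{pr fun-fw}, and for bounded $|y|$ it follows from the positivity of $\Phi_{d,\mu}$.

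\emph{Step 3 (replacement condition).} It remains to show that (\ref{con 2.1-1-asy}) implies (\ref{con 2.1-1}). Suppose $f(x)\ge c|x|^{\tau_+-2}$ for large $|x|$, using $|x|_Q\asymp|x|$. Then $\sum f(x)(1+|x|)^{\tau_-}$ dominates $\sum_{|x|\ge R}|x|^{\tau_++\tau_--2}$. Since $\tau_++\tau_-=2-d$, this sum is $\sum|x|^{-d}$, which diverges logarithmically on $\Z^d$, so (\ref{con 2.1-1}) holds.

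\emph{Main obstacle.} The delicate point is the critical case $\mu=-1$. There Theorem \ref{pr fun-fw}$(b)$ requires $H_0<\overline H_0$, which Proposition \ref{pr 3.2} does not assume. If that hypothesis is absent, I would first replace $H_0$ by a slightly smaller potential $\tilde H_0\le H_0$ that still satisfies $(\cH_1)$ with strict inequality. Because $\tilde H_0\le H_0$ and $\mu=-1<0$, we have $-\tilde H_0\ge -H_0$, so $u$ remains a supersolution for the modified operator, i.e.\ $-\Delta u-\tilde H_0u\ge f$. The argument above then runs for $\tilde H_0$, and the kernel still has order $|x|^{\tau_-}$.
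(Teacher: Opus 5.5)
Your proposal is correct and follows essentially the paper's route: truncate to $f1_{B_n}$, compare the truncated solutions with $u$, represent them through the Green kernel, use the lower bound $\Phi_{d,\mu}(y,\xi_0)\ge c(1+|y|)^{\tau_-}$ from Theorem \ref{pr fun-fw} to force $u(\xi_0)=+\infty$, count lattice shells to show that (\ref{con 2.1-1-asy}) implies divergence, and lower $H_0$ in the critical case $\mu=-1$. Your version is slightly more careful in two places. First, your representation $v_n(\xi_0)=\sum_y G_n(y,\xi_0)f(y)$ keeps the pole fixed; the paper's $\sum_z\Phi_{d,\mu}(x_0,z)f_n(z)$ tacitly needs the kernel bound uniformly in the pole, i.e.\ symmetry of the kernel. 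Second, your requirement $\tilde H_0\le H_0$ (e.g.\ $\tilde H_0=(1-(2+|x|)^{-\varepsilon_0})H_0$) is exactly what keeps $u$ a supersolution, which the paper's choice $\overline H_0(1-(2+|x|)^{-\varepsilon_0})$ does not guarantee under $(\cH_1)$.
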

	\noindent{\bf Proof. } By contradiction, we assume that  $u_0$ is a nonnegative solution of  (\ref{eq 1.1 EH}), then 
	strong maximum principle implies that $u_0>0$ in $\Z^d$. 
	
	
From $(\cH_0)$, we assume  $\mu H_0+\overline{H}_0>0$,   then it follows from Proposition \ref{pr 3.1} that 
	\begin{equation}\label{eq 3.1-fn}
		\left\{\arraycolsep=1pt
		\begin{array}{lll}
			\cL_\mu u    = f_n   \quad
			&{\rm in}\ \ \Z^d  , \\[2mm]
			\phantom{ -\  }
			u\geq 0 \quad &{\rm   in}\ \ \,    \Z^d, 
		\end{array}
		\right.
	\end{equation}
	admits   the minimal positive solution $v_{n,f}$, 
	where  $f_n=f\chi_{B_n}$. Here $\chi_{A}=1$ in $A$, $\chi_{A}=0$ otherwise.   
	 For $\mu=-1$ and $H_0\leq \overline{H}_0$, we can reset 
	 $$\tilde H_0(x)=\overline{H}_0(x)(1-(2+|x|)^{-\varepsilon_0})\quad {\rm for}\ \, x\in\Z^d $$
	 and we consider the minimal solution of 
	 \begin{equation} \label{eq 3.1-fn-1}
		\left\{\arraycolsep=1pt
		\begin{array}{lll}
			-\Delta  u - \tilde H_0  u = f_n   \quad
			&{\rm in}\ \ \Z^d  , \\[2mm]
			\phantom{ ----\  }
			u\geq 0 \quad &{\rm   in}\ \ \,    \Z^d. 
		\end{array}
		\right.
	\end{equation}

	By comparison principle, we have that
	$$0\leq v_{n,f}\leq u_0\quad{\rm in}\ \, \Z^d $$ 
	and 
	$$v_{n,f}(x)=\sum_{z\in \Z^d}\Phi_{d,\mu}(x,z)f_n(z),\quad\forall\,  x\in\Z^d.  $$
	It is known that 
	$$0<\liminf_{x\in\Z^d,\,|x|\to +\infty}v_{n,f}(x)|x|^{-\tau_-} \leq \limsup_{x\in\Z^d,\,|x|\to +\infty}v_{n,f}(x)|x|^{-\tau_-}<+\infty  $$
	and it follows by Theorem \ref{pr fun-fw}   that 
	for fixed $x_0$, there exists $c>0$ such that for $n>4(|x_0|+1)$
	\begin{align*}
		u_0(x_0)  \geq v_{n,f}(x_0)&=\sum_{z\in \Z^d}\Phi_{d,\mu}(x,z)f_n(z) 
		\\[1mm]&\geq c\sum_{z\in B_{n}\setminus B_{2(|x_0|+1)}} |z|^{\tau_-}f_n(z) 
		\to+\infty\quad{\rm as}\ \ n\to+\infty,
	\end{align*}
	which is impossible. The nonexistence follows.

	From (\ref{con 2.1-1-asy}), there exists $n_1\geq 1$ and $c>0$ such that 
	$$ f(x)\geq c|x|_{_Q}^{\tau_+-2}=c|x|_{_Q}^{-\tau_--d} $$
	and for $n>n_1$
	\begin{align*}
		\sum_{ n_1\leq |x|_{_Q}\leq n}f(x)|x|_{_Q}^{\tau_-} &\geq  c \sum_{ n_1\leq |x|_{_Q}\leq n} |x|_{_Q}^{-d}
		= c \sum_{k=n_1}^n  \big|\{x\in \Z^d: |x|_{_Q}=k\}\big|  k^{-d}
		\\[1mm]& \geq  c' \sum_{k=n_1}^n   k^{-1}\to +\infty \quad{\rm as}\ n\to+\infty, 
	\end{align*}
	which  leads to (\ref{con 2.1-1}).   \hfill$\Box$\medskip

	\begin{proposition}\label{pr 3.3} 
		Let   $d\geq 3$,    $\mu< -1$,      $H_0\in C(\Z^d)$ verify $(\cH_1)$
		and   $f\in C(\Z^d)$  be a nonnegative nontrivial function, 
		then the homogeneous problem  (\ref{eq 1.1 EH})
		has no nonnegative   solution 
	\end{proposition}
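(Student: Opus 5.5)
Argue by contradiction: suppose $u\ge 0$ solves (\ref{eq 1.1 EH}) with $\mu<-1$ and $f\gneqq 0$. The plan is to turn this positive supersolution into a nonnegativity statement for the quadratic form of $\cL_\mu$, and then defeat that form with test functions concentrated near infinity.

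\emph{Step 1: $u$ is strictly positive.} If $u(x_0)=0$ at some point, then
$$0\le f(x_0)\le \cL_\mu u(x_0)=-\sum_{y\sim x_0}u(y)\le 0,$$
so $u$ vanishes at every neighbour of $x_0$. Since $\Z^d$ is connected, propagating this gives $u\equiv 0$. Then $\cL_\mu u\equiv 0$, which contradicts $\cL_\mu u\ge f\gneqq 0$. Hence $u>0$ in $\Z^d$ and $\cL_\mu u\ge 0$.

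\emph{Step 2: ground state transform (Allegretto--Piepenbrink).} For finitely supported $\varphi$, write $\varphi=u\,\eta$. A direct computation on edges gives
$$\mathcal{E}(\varphi)+\mu\sum_{x}H_0\varphi^2=\frac12\sum_{x}\sum_{y\sim x}u(x)u(y)\big(\eta(x)-\eta(y)\big)^2+\sum_x \frac{\cL_\mu u(x)}{u(x)}\,\varphi(x)^2 .$$
Both terms on the right are nonnegative. Therefore
$$\mathcal{E}(\varphi)+\mu\sum_{x}H_0(x)\varphi(x)^2\ge 0\qquad\text{for all finitely supported }\varphi .$$
So a positive supersolution forces the Hardy-type form to be nonnegative, and the rest of the proof contradicts this.

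\emph{Step 3: the form is negative for some test function.} Fix $\delta>0$ with $\mu<-(1+2\delta)$. By $(\cH_1)$ and (\ref{ome-1}), for $|x|$ large we have
$$\mu H_0(x)\le -(1+\delta)\frac{(d-2)^2}{4}|x|^{-2}.$$
Take the critical profile $\psi_{\tau_0}$ with $\tau_0=\frac{2-d}{2}$, i.e. behaving like $|x|^{(2-d)/2}$, and set
$$\varphi_R(x)=\psi_{\tau_0}(x)\,\zeta\Big(\frac{\log|x|}{\log R}\Big),$$
where $\zeta\in C_c^\infty((1,2))$ is not identically zero. Thus $\varphi_R$ lives on the annulus $R\le|x|\le R^2$.

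Expand $\mathcal{E}(\varphi_R)=\sum_x \varphi_R(-\Delta\varphi_R)$ using the Taylor computation that produced (\ref{est-1}).
\begin{itemize}
\item The pure-power part gives $-\tau_0(\tau_0-2+d)=\frac{(d-2)^2}{4}$ times $\sum \zeta^2|x|^{-d}$. Since $|\{|x|\approx r\}|\approx r^{d-1}$, this sum is $\asymp \log R$.
\item Terms where a derivative falls on the cutoff are of order $(\log R)^{-1}$ and $(\log R)^{-2}$ relative to the main term; after summation they contribute $O(1)$.
\item The discrete correction terms are $O(|x|^{-d-1})$ and are summable, so they also contribute $O(1)$.
\end{itemize}
Hence
$$\mathcal{E}(\varphi_R)+\mu\sum H_0\varphi_R^2\le -\delta\,\frac{(d-2)^2}{4}\sum_x\zeta^2|x|^{-d}+O(1)\le -c\,\log R+O(1)<0$$
for $R$ large. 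This contradicts Step 2, and no nonnegative solution exists.

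The main obstacle is Step 3: making the lattice energy estimate of the log-cutoff test function rigorous. The key point is that the leading constant is exactly $\frac{(d-2)^2}{4}$ and that all cutoff and discretization errors stay $O(1)$ while the main term grows like $\log R$. I would handle this by reusing the expansion behind (\ref{est-1}) with $\sigma=\tau_0$, treating the factor $\zeta(\log|x|/\log R)$ as slowly varying.
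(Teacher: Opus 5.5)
Your proof is correct, and it takes a genuinely different route from the paper.

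\textbf{The paper's route.} It stays inside its Poisson framework. It rewrites the inequality as $-\Delta u-H_0u\ge(-1-\mu)H_0u+f$. It then uses Corollary~\ref{existence poisson}(i) at the critical coupling $\mu=-1$ to get $u\ge c(1+|x|)^{\tau_-(-1)}$. Consequently $\bar f:=(-1-\mu)H_0u+f\gtrsim|x|^{\tau_-(-1)-2}$, so $\sum_x\bar f(x)(1+|x|)^{\tau_-(-1)}\gtrsim\sum_x|x|^{-d}=+\infty$. It concludes by Proposition~\ref{pr 3.2} with $\mu=-1$.

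\textbf{Your route.} You use the ground-state (Allegretto--Piepenbrink) identity to convert a positive supersolution into nonnegativity of $\mathcal{E}(\varphi)+\mu\sum_xH_0\varphi^2$. You then refute this directly: logarithmic cutoffs of $|x|^{(2-d)/2}$ show that $\frac{(d-2)^2}{4}$ is the sharp Hardy constant at infinity on $\Z^d$. Both arguments rest on the same logarithmic divergence of $\sum_x|x|^{-d}$.

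\textbf{What each buys.} The paper's route fits the mechanism (Green-kernel lower bound plus non-summability) that it reuses for the semilinear results. However, it needs the Green kernel of $-\Delta-H_0$ and its lower bound at $\mu=-1$. The paper provides these only under the extra hypothesis $H_0<\overline{H}_0$, which this proposition does not assume. Your route avoids Green kernels altogether and has several advantages:
\begin{itemize}
\item it needs only $|\mu|\liminf_{|x|\to\infty}H_0(x)|x|^2>\frac{(d-2)^2}{4}$, so $(\cH_0)$ suffices;
\item it uses $f\gneqq0$ only to exclude $u\equiv0$;
\item it shows that $\cL_\mu$ has no positive supersolution at all, and none on exterior domains either, since the test functions live near infinity.
\end{itemize}

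\textbf{The lattice estimate you flag goes through as you say.} Write $\zeta_R(x)=\zeta(\log|x|/\log R)$ and use the discrete product rule $\Delta(\psi_{\tau_0}\zeta_R)(x)=\zeta_R\Delta\psi_{\tau_0}+\psi_{\tau_0}\Delta\zeta_R+\sum_{y\sim x}(\psi_{\tau_0}(y)-\psi_{\tau_0}(x))(\zeta_R(y)-\zeta_R(x))$. Then:
\begin{itemize}
\item the main term is $\frac{(d-2)^2}{4}\sum_x\zeta_R^2|x|^{-d}\asymp\log R$;
\item the terms with differences of $\zeta_R$ are $O(|x|^{-d}/\log R)$ pointwise, hence $O(1)$ after summing over $R<|x|<R^2$;
\item the error in $-\Delta\psi_{\tau_0}=\frac{(d-2)^2}{4}|x|^{\tau_0-2}+O(|x|^{\tau_0-4})$ contributes only a summable $O(|x|^{-d-2})$.
\end{itemize}
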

	\noindent{\bf Proof.}   By contradiction, we assume that  $u_0$ is a nonnegative solution of  (\ref{eq 1.1 EH}),
	then strong maximum principle implies that $u_0>0$ in $\Z^d$. 
	
 Note that  $u_0$ verifies the inequality 
	\begin{equation}\label{eq 1.1 EH-trans}
		\left\{\arraycolsep=1pt
		\begin{array}{lll}
			-\Delta u- H_0  u     \geq (-1- \mu) H_0    u+  f   \quad
			&{\rm in}\ \  \Z^d  , \\[2mm]
			\phantom{ -----  }
			u\geq 0 \quad &{\rm   in}\ \ \,    \Z^d.
		\end{array}
		\right.
	\end{equation}
	Note that $H_0$  verifies  condition $(\cH_1)$.  Since   $f\in C(\Z^d)$  be a nonnegative nontrivial function, then from  Corollary \ref{existence poisson} $(i)$, there exists $c>0$ such that 
	$$ u_0(x)\geq c(1+|x|)^{\tau_-(-1)}\quad {\rm   in}\ \ \,    \Z^d.$$



Then there exists $n_0>1$ such that 
$$(-1- \mu)  H_0  u_0(x)+  f\geq   (-1-\mu) H_0(x) u_0(x)\geq  \frac12 (-1-\mu) (1+|x|)^{\tau_-(-1)-2}\quad {\rm for}\ \ \,   |x|>n_0$$
for some $c'>0$. 

Let $\bar f=(-1- \mu)  H_0  u_0(x)+  f \geq 0 $, then by (\ref{ome-1})
$$\liminf_{x\in\Z^d,\,|x|\to+\infty} \bar f(x)|x|_{_Q}^{2 -\tau_+(-1)} \geq c'>0,  $$
where $c'=\frac12(-1-\mu)\frac{(d-2)^2}{4}>0$. 
Then a  contradiction arises from the fact that (\ref{eq 1.1 EH-trans}) has no solution  by Proposition \ref{pr 3.2} with $\mu=-1$ and $\bar f$.  
\hfill$\Box$\medskip

\medskip

\noindent{\bf Proof of Theorem \ref{teo 1-p}. } The parts $(a)$, $(b)$ and $(c)$  follow by Proposition \ref{pr 3.1}, 
Proposition \ref{pr 3.2} and Proposition \ref{pr 3.3} respectively.   \hfill$\Box$\medskip 

\setcounter{equation}{0}
\section{Application to semilinear Hardy equations}

In this section, we study the existence of positive  solutions of semilinear   elliptic  inequality 
\begin{equation}\label{eq 1.1-ext}
	\left\{\arraycolsep=1pt
	\begin{array}{lll}
		\cL_\mu  u\geq W u^p \quad
		&{\rm in}\ \  \Z^d, \\[2mm]
		\phantom{-\   }
		u\gneqq 0  \quad  &{\rm   in} \ \  \Z^d, 
	\end{array}
	\right.
\end{equation}
where $p>0$,  $\mu\geq-1$ and $W,\, H_0\in C(\Z^d)$ are positive  potentials.

The prototype of  the Lane-Emden equation is the following
\begin{equation}\label{eq le-0}
	-\Delta_{\R^d}  u=u^p,
\end{equation}
which admits no any  positive solution in $\R^d$ for $p\in(1, \frac{d+2}{d-2})$ by Pohozeav indentity and in $\R^d\setminus\{0\}$ for $p\in(1, \frac{d}{d-2}]$, see \cite{AS,CPZ}, where $\Delta_{\R^d} u(x)=\sum^d_{i=1}\partial_i^2 u(x)$ is the Laplacian operator and. The nonexistence  in exterior domains was established in \cite{CF}. When  $p>\frac{d}{d-2},$ (\ref{eq le-0}) in $\R^d\setminus\{0\}$ has a family of fast decaying solutions at infinity and one slow decaying solution  by the phase plane analysis \cite{MP}.

From the improved Hardy inequality \cite{BV1}, when $d\ge3$ and  $\mu\geq -1$, the  Hardy problem
$$
\cL_{\R^d,\mu}   u:=-\Delta_{\R^d} u+\frac{\mu}{|x|^2} u=f(x,u) \quad\ {\rm in}\ \  \R^d\setminus\{0\}
$$
has been studied extensively,  and the global regularity was established in \cite{W1}.  In a bounded domain containing the origin,  the isolated singularities of the problem
$$
\cL_{\R^d,\mu} u=u^p\quad  {\rm in}\ \, \Omega\setminus\{0\}, \qquad u=0\quad{\rm on}\ \, \partial\Omega,
$$
could be solved by building suitable connections  with  the weak solutions of
$$
\displaystyle    \mathcal{L}_{\R^d,\mu}   u-u^p =k\delta_0\quad
{\rm in}\ \, \Omega, \qquad u=0\quad{\rm on}\ \, \partial\Omega,
$$
in a  weighted  distributional sense inspired by \cite{CQZ} for
$ p\in(1,1+\frac{4}{d-2}\frac{1}{1+\sqrt{ 1+\mu}})$, where the upper bound is
the corresponding Serrin critical exponent.    Isolated singularities for semilinear elliptic equations with Hardy-Leray potentials are studied in \cite{C,ChVe1}.
For  the nonexistence of positive solutions for the semilinear Hardy inequality
$$
\cL_{\R^d,\mu} u\geq u^p\quad  {\rm in}\ \, B_r\setminus\{0\}, \qquad u=0\quad{\rm on}\ \, \partial B_r
$$
it was first studied in \cite{BDT}, where the authors found some special nonexistence when $\mu\in(-1,0)$ and $p>1+\frac{4}{d-2}\frac{1}{1-\sqrt{ 1+\mu}}$. More related results can be found in \cite{CY,W1,MV,D1,F0,FM}.  
Recently, the authors in \cite{CHW} provide a new method to give profound results on
$$
\cL_{\R^d,\mu} u\geq |x|^{\theta}u^p  $$
in the punctured domains $\R^d\setminus\{0\}$, $B_r\setminus\{0\}$ and exterior domain $\R^d \setminus B_r$, 
where $\theta\in\R$ and $p>0$. 
\smallskip

Our aim is to find the optimal range of $p$ for the nonexistence of positive solution of (\ref{eq 1.1-ext}). 
Here we emphasize our assumption including the range  $p\in(0,1)$ and the critical Hardy potentials. 
To this end, we introduce the following critical exponents.

\begin{itemize}
	
	\item[ $(\cW_\theta )$]  Potentail  $W\in C(\Z^d)$ is nonnegative and verifies   
	\begin{equation}\label{pt r2}
		\liminf_{x\in\Z^d,\, |x|  \to+\infty}W(x)|x|^{-\theta}>0\quad \text{ for some $\theta\in\R$}. 
	\end{equation}
\end{itemize}

For 
$$d\geq3,\quad \theta\in\R,\quad   \mu\geq \mu_0:=-1.$$ 
we denote 
and
\begin{equation}\label{eq 1.1-cr1}
	p^*_{\mu,\theta}=  1+\frac{2+\theta}{-\tau_-(\mu)},\qquad\quad  
	p^\#_{\mu,\theta}=\left\{
	\begin{array}{lll}
		1+  \frac{2+\theta}{-\tau_+(\mu)} &\quad {\rm if }\;\ \mu\in[-1,+\infty)\setminus\{0\},\\[2mm]
		0&\quad  {\rm if }\;\ \mu= 0.  
	\end{array}\right.
\end{equation}
Here $p^*_{\mu,\theta}$ is the related Serrin's  critical exponent, particularly 
$$p^*_{\mu,0}=1+\frac{4}{d-2}\frac{1}{1+\sqrt{ 1+\mu}},$$
which coincides the Serrin critical exponent of Hardy operator in $\R^d$, 
and  $p^\#_{\theta,\mu}$ is 
a   particular critical exponent defined for $\mu\not=0$.
Notice that 
$$
p^\#_{\mu_0,\theta}=p^*_{\mu_0,\theta} =\frac{d+2\theta+2}{d-2},$$
$$
p^\#_{\mu,\theta}> p^*_{\mu,\theta}>1\ \ \,  {\rm if}\ \ \mu\in(\mu_0,0) \ \, \& \ \,  \theta>-2,\qquad
p^\#_{\mu,\theta}< p^*_{\mu,\theta}<1 \ \ \,  {\rm if}\ \ \mu\in(\mu_0,0) \ \, \& \ \,    \theta<-2
$$
and
$$
p^\#_{\mu,\theta}<1< p^*_{\mu,\theta}\ \ \,  {\rm if}\ \ \mu>0 \ \, \& \ \,  \theta>-2,\qquad
p^\#_{\mu,\theta}>1> p^*_{\mu,\theta} \ \ \,  {\rm if}\ \ \mu>0 \ \, \& \ \,    \theta<-2.  $$

Our main results on the nonexistence is the following. 

\begin{theorem}\label{teo 2}
	Let    $\mu\geq \mu_0$,   $H_0, W\in C(\Z^d)$ verify  $(\cH_1)$ and $(\cW_\theta )$ respectively.

	\smallskip
	
	\noindent $(i)$    If $\mu=\mu_0$, $\theta>\tau_--2$,   $p\in(0, p^*_{\mu,\theta}]$, 
	   then problem (\ref{eq 1.1-ext})  has no    solution.\medskip

	\noindent$(ii)$  If $\mu\in(\mu_0,0)$,    one of the followings holds:
	\begin{itemize}
		\item[$(a)$]
		$\theta>-2$ and  $p\in(0, p^*_{\mu,\theta}]$; 
		\item[$(b)$]   $\theta= -2$, either $p\in(0,1)$ or
		$$p=1\ \ \  {\rm and}\ \ \  w_\infty  >\mu+1,$$
		where  
		\begin{equation}\label{const q=1}
			w_\infty= \liminf_{x\in\Z^d,\, |x|  \to+\infty}\frac{W(x)}{ H_0(x)};   
		\end{equation}
		
		\item[$(c)$]    $\tau_+-2<\theta< -2$ and  $p\in(0, p^\#_{\mu,\theta})$,
	\end{itemize}
	then (\ref{eq 1.1-ext})  has no  solution.\medskip

	\noindent$(iii)$  If $\mu\in[0,+\infty)$    and one of the followings holds:   {
		\begin{itemize}
			\item[$(d)$]   $\theta > -2$\ {\rm and}\   $p\in\big(\max\{0,p^\#_{\mu,\theta}\},\, p^*_{\mu,\theta}\big]$;
			\item[$(e)$]         $\theta= -2$,  $p=1$\  {\rm  and}\   $\displaystyle  w_\infty >\mu+1,$
	\end{itemize} }
	\noindent then (\ref{eq 1.1-ext})  has no   solution.
\end{theorem}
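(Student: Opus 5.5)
The plan is to argue by contradiction and run a bootstrap on the decay exponent of a supposed solution, fed at each step by the Poisson classification (Theorem \ref{teo 1-p}, Proposition \ref{pr 3.2}) and the lower bounds of Proposition \ref{pr 3.1} and Corollary \ref{existence poisson}.

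\textbf{Setup and initial bound.} Let $u$ be a solution of (\ref{eq 1.1-ext}). Since $\cL_\mu u\ge Wu^p\ge0$ and $u\gneqq0$, the strong maximum principle (Corollary \ref{cr comp}) gives $u>0$ in $\Z^d$. Thus $u$ is a nonnegative supersolution of the Poisson problem with $g:=Wu^p\gneqq0$. Comparing $u$ with the minimal solution $v_g$, Corollary \ref{existence poisson}$(i)$ gives the starting bound
$$u\ge c(1+|x|)^{s_0},\qquad s_0=\tau_-.$$
Inserting this into $(\cW_\theta)$ yields
$$g(x)\ge c|x|^{\theta+ps_0}\qquad\text{for }|x|\text{ large}.$$

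\textbf{Divergence criterion.} By Proposition \ref{pr 3.2}, a contradiction follows as soon as $\sum g(x)(1+|x|)^{\tau_-}=+\infty$. Since $\tau_-+\tau_+=2-d$, this holds whenever $\theta+ps\ge\tau_+-2$, i.e. $T(s)\ge\tau_+$, where $T(s):=\theta+2+ps$.

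\textbf{Iteration.} If instead $T(s_k)\in(\tau_-,\tau_+)$, I apply the lower bound of Proposition \ref{pr 3.1} to the minimal solution with right-hand side $\min\{g,c|x|^{T(s_k)-2}\}$ (cut off near the origin). By comparison this gives
$$u\ge c(1+|x|)^{s_{k+1}},\qquad s_{k+1}=T(s_k).$$
It remains to check, case by case, that the affine iteration $s_{k+1}=T(s_k)$ started at $\tau_-$ reaches $[\tau_+,\infty)$ after finitely many steps.

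\begin{itemize}
\item $T(\tau_-)>\tau_-$ is equivalent to $\theta+2>(p-1)(-\tau_-)$, i.e. $p<p^*_{\mu,\theta}$ when $\theta>-2$. If $p\le1$, then $T(s)-s=\theta+2+(p-1)s$ stays bounded below by a positive constant, so the iterates pass $\tau_+$. If $p>1$, the increments $T(s)-s$ are increasing in $s$, so again $\tau_+$ is reached.
\item For $\theta<-2$ and $p<1$ (case $(c)$), the fixed point is $s^*=(\theta+2)/(1-p)$. The condition $p<p^\#_{\mu,\theta}$ is exactly $T(\tau_+)>\tau_+$, i.e. $s^*>\tau_+$, so the iterates increase monotonically toward $s^*$ and eventually exceed $\tau_+$. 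The hypothesis $\theta>\tau_+-2$ guarantees that $T(\tau_-)>\tau_-$ at the start.
\item Case $(d)$ with $\mu\ge0$ is similar, except that $\tau_+\ge0$. Here $p>p^\#_{\mu,\theta}$ combined with $p<1$ is what places $s^*$ above $\tau_+$; for $p\ge1$ the previous monotonicity argument applies.
\item For $\theta=-2$, $p<1$, we have $T(s)-s=(p-1)s>0$ for $s<0$, and the same argument works.
\item Case $(i)$, $\mu=-1$, follows the same pattern with $\tau_+=\tau_-$: there the first step already gives divergence as soon as $\theta+p\tau_-\ge\tau_--2$, and the critical exponent is handled as below.
\end{itemize}

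\textbf{Linear cases $(b)$, $(e)$.} For $\theta=-2$, $p=1$ and $w_\infty>\mu+1$, I rewrite the inequality as
$$-\Delta u-H_0u\ \ge\ \big(W-(\mu+1)H_0\big)u\ \ge\ \delta H_0u\qquad\text{for }|x|\ge R.$$
I then repeat the proof of Proposition \ref{pr 3.3}: the lower bound $u\ge c(1+|x|)^{\tau_-(-1)}$ makes the right-hand side $\bar f$ satisfy (\ref{con 2.1-1-asy}) for $\mu=-1$, which contradicts Proposition \ref{pr 3.2}. The right-hand side is negative on the finite ball $B_R$, but this only adds a finitely supported term and does not affect the summability at infinity.

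\textbf{Main obstacle: the endpoint $p=p^*_{\mu,\theta}$.} Here $T(\tau_-)=\tau_-$ and the power iteration stalls. I would obtain a logarithmic gain from the representation
$$u(x)\ge\sum_z\Phi_{d,\mu}(x,z)g(z),$$
which needs a two-point lower bound $\Phi_{d,\mu}(x,z)\ge c|x|^{\tau_-}|z|^{\tau_+}$ for $1\le|z|\le|x|/2$. I would prove this by rescaling the comparison argument in the proof of Theorem \ref{pr fun-fw} on annuli, using the sub-solutions $\psi_{\tau_-}$ and $\psi_{\tau_+}$. With this bound,
$$u(x)\ge c|x|^{\tau_-}\sum_{|z|<|x|/2}|z|^{\tau_++\tau_--2}=c|x|^{\tau_-}\sum_{|z|<|x|/2}|z|^{-d}\ge c|x|^{\tau_-}\log|x|.$$
Feeding $u\ge c|x|^{\tau_-}(\log|x|)^{k}$ back in raises the power of the logarithm to $pk+1$, since $p\ge1$ at this endpoint. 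The sum $\sum g|x|^{\tau_-}$ then becomes $\sum|x|^{-d}(\log|x|)^{pk}$, which diverges already at $k=0$ when $\tau_-=\tau_+$. In general, enough iterations should push the estimate into the divergent regime of Proposition \ref{pr 3.2}; if they do not, I would instead conclude with a test-function argument against the accumulated logarithms.

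I expect establishing this two-point Green estimate, and thus the endpoint case, to be the hardest step.
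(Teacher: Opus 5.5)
\textbf{Where you match the paper.} Your subcritical bootstrap matches the paper's Cases 1 and 2 in the proof of Proposition \ref{pr 4.1-nonex}: the initial bound $u\ge c(1+|x|)^{\tau_-}$, the iteration $s_{k+1}=\theta+2+ps_k$ fed by Proposition \ref{pr 3.1}, and termination via Proposition \ref{pr 3.2}. Your linear cases $\theta=-2$, $p=1$, $w_\infty>\mu+1$ match Case 4: you reduce to an effective coefficient below $-1$ and invoke Proposition \ref{pr 3.3}. As you note at the end, for $\mu=\mu_0$ the endpoint is already covered by the first step, because $\tau_+=\tau_-$.

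\textbf{The gap: the endpoint for $\mu>\mu_0$.} The genuine gap is $p=p^*_{\mu,\theta}$ with $\mu>\mu_0$ and $\theta>-2$, which is part of (ii)(a) and (iii)(d). Here your logarithmic iteration cannot reach the divergent regime of Proposition \ref{pr 3.2}. From $u\ge c|x|^{\tau_-}(\log|x|)^k$ and $\theta+p^*_{\mu,\theta}\tau_-=\tau_--2$ you only get $g\gtrsim|x|^{\tau_--2}(\log|x|)^{pk}$. Hence
\[
\sum_x g(x)(1+|x|)^{\tau_-}\gtrsim\sum_x|x|^{2\tau_--2}(\log|x|)^{pk},\qquad 2\tau_--2=-d-(\tau_+-\tau_-)<-d,
\]
and this converges for every $k$. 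You wrote $\sum|x|^{-d}(\log|x|)^{pk}$, but that is correct only when $\tau_+=\tau_-$. For $\mu>\mu_0$, no power of the logarithm overcomes the polynomial gap $|x|^{\tau_--\tau_+}$.

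Two further problems remain. Your fallback ``test-function argument'' is not specified. The two-point bound $\Phi_{d,\mu}(x,z)\ge c|x|^{\tau_-}|z|^{\tau_+}$ is not available in the paper either: the fixed-pole estimates of Theorem \ref{pr fun-fw} do not give it, and you would need scale-uniform bounds for $|z|\asymp|x|$.

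\textbf{The missing idea: absorb part of the nonlinearity into the potential.} At $p=p^*_{\mu,\theta}>1$, the bare bound $u\ge c|x|^{\tau_-}$ already gives $Wu^{p-1}\ge c|x|^{\theta+(p-1)\tau_-}=c|x|^{-2}$. Hence $\frac12Wu^{p-1}\ge\sigma_0H_0$ outside a finite set, for some $\sigma_0\in(0,\mu+1)$, and therefore
\[
-\Delta u+(\mu-\sigma_0)H_0u\ \ge\ \tfrac12\,Wu^p\qquad\text{near infinity}.
\]
Since $\mu\mapsto\tau_-(\mu)$ is decreasing and $2+\theta>0$, we have $p^*_{\mu,\theta}<p^*_{\mu-\sigma_0,\theta}$. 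So $p$ is strictly subcritical for $\cL_{\mu-\sigma_0}$, and your own bootstrap finishes the proof; this is the paper's Case 3.

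Had you obtained one logarithmic gain, you could also close by the same mechanism. Then $Wu^{p-1}\gtrsim|x|^{-2}(\log|x|)^{p-1}$ dominates $MH_0$ for every $M$, which gives an effective coefficient below $-1$ and a contradiction as in Proposition \ref{pr 3.3}. But absorbing a small multiple of $H_0$ makes both the logarithmic gain and the two-point Green estimate unnecessary.
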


We should mention that   the nonexistence of positive solutions of  (\ref{eq 1.1-ext})  is very different from
the related Liouville theorem in the whole Euclidean space $\R^d$, thanks to the discrete geometric property.

To show the sharpness of our Liouville theorem, we next show the existence for the inequality (\ref{eq 1.1-ext}).

\begin{theorem}\label{teo cri ex 1}
	Assume that  $d\geq 3$,  $ W\in C(\Z^d)$ is nonnegative and verifies   
	\begin{equation}\label{pt r2+}
		\limsup_{x\in\Z^d,\, |x|  \to+\infty}W(x)|x|^{-\theta}<+\infty\quad \text{ for some $\theta\in\R$}. 
	\end{equation}
 
	and
	either 
	$$\mu= \mu_0,\ \text{ $H_0 \in C(\Z^d)$ verifies  $(\cH_1)$}, $$  
	$$H_0<\overline{H}_0\quad{\rm in}\ \Z^d,$$
	or
	$$\mu>\mu_0,\quad  0<H_0(x)\leq \overline{H}_0\ \ {\rm in}\ \, \Z^d, \quad  \text{ $H_0 \in C(\Z^d)$ verifies  $(\cH_0)$}. $$

	

	\begin{itemize}
		\item[$(A)$] Let $\mu\geq \mu_0$ and $\theta\in\R$.
		
		If $p>\max\{0,p^*_{\mu,\theta}\}, $   then problem (\ref{eq 1.1-ext})  has a positive  solution
		$u $ satisfying 
		\begin{equation}\label{es-be-1}
			0<\liminf_{x\in\Z^d,|x|\to+\infty}u (x) |x| ^{-\tau_-} \leq \limsup_{x\in\Z^d,|x|\to+\infty}u (x) |x| ^{-\tau_-} <+\infty. 
		\end{equation}
		
		\item[$(B1)$] 
		Let $\mu\in(\mu_0,\, 0)$ and   $\tau_--2<\theta<-2.$
		
		If   $p\in( \max\{0,p^\#_{\mu,\theta}\}, p^*_{\mu,\theta})$,  then  problem (\ref{eq 1.1-ext})    has a  positive solution $u $ such that 
		\begin{equation}\label{es-be-2}
			0<\liminf_{x\in\Z^d,|x|\to+\infty}u (x) |x| ^{\frac{2+\theta}{p-1}} \leq \limsup_{x\in\Z^d,|x|\to+\infty}u(x) |x| ^{\frac{2+\theta}{p-1}} <+\infty. 
		\end{equation}
		If   $ p= p^*_{\mu,\theta}\in(0,1)$, then problem (\ref{eq 1.1-ext})    has a  positive   solution $u $ such that 
		\begin{equation}\label{es-be-3}
			0<\liminf_{x\in\Z^d,|x|\to+\infty}u (x) |x| ^{ -\tau_- }\big(\ln (|x|)\big)^{-\frac1{ p^*_{\mu,\theta}-1}}\leq \limsup_{x\in\Z^d,|x|\to+\infty}u (x)  |x| ^{ -\tau_- }\big(\ln (|x|)\big)^{-\frac1{ p^*_{\mu,\theta}-1}}<+\infty. 
		\end{equation}

		\item[$(B2)$] 
		Let $\mu\in(\mu_0,\, 0)$ and $\theta>-2. $
		
		If $ p^*_{\mu,\theta}<p<p^\#_{\mu,\theta},   $
		then  problem (\ref{eq 1.1-ext})    has a  positive solution $u $  verifying (\ref{es-be-2}). 
		
		If  $p= p^\#_{\mu,\theta}>1$, then problem (\ref{eq 1.1-ext})    has a  positive   solution $u$ verifying  
		\begin{equation}\label{es-be-4}
			0<\liminf_{x\in\Z^d,|x|\to+\infty}u (x) |x| ^{ -\tau_+ }\big(\ln (|x|)\big)^{-\frac1{ p^\#_{\mu,\theta}-1}}\leq \limsup_{x\in\Z^d,|x|\to+\infty}u (x)  |x| ^{ -\tau_+ }\big(\ln (|x|)\big)^{-\frac1{ p^\#_{\mu,\theta}-1}}<+\infty. 
		\end{equation}

		\item[$(C1)$]  Let $\mu>0$ and $\theta<-2. $

		If   $p\in( \max\{0,p^*_{\mu,\theta}\}, p^\#_{\mu,\theta})\setminus\{1\}$,  then  problem (\ref{eq 1.1-ext})    has a  positive  solution $u$ verifying (\ref{es-be-2}). 
		
		If $\tau_--2<\theta<-2 $ and $p= p^*_{\mu,\theta}\in(0,1)$, then (\ref{eq 1.1-ext})    has a  positive   solution $u$ verifying (\ref{es-be-3}).
		
		If   $p= p^\#_{\mu,\theta}>1$, then problem (\ref{eq 1.1-ext})    has a  positive   solution $u$ verifying 
		(\ref{es-be-4}).

		\item[$(C2)$]   Let $\mu>0$ and $\theta>-2. $ 
		
		If   $p\in\big(0, \max\{0,p^\#_{\mu,\theta}\}\big)\cup (p^*_{\mu,\theta},+\infty)$,  then  problem (\ref{eq 1.1-ext})    has a  positive  solution $u$ verifying (\ref{es-be-2}). 
		
	\end{itemize}
\end{theorem}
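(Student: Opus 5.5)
The plan is to build explicit positive supersolutions $u$ with $\cL_\mu u\ge Wu^p$ on all of $\Z^d$. Each one combines a power (or power-log) profile, which handles the region near infinity, with a multiple of $w_1=\sum_{y\in B_{n_0}}\Phi_{d,\mu}(\cdot,y)$ from \eqref{eq 3.1-0}, which handles the finite core. Since $\cL_\mu w_1=1_{B_{n_0}}$ and $w_1\asymp(1+|x|)^{\tau_-}$ by Theorem \ref{pr fun-fw}, adding $Mw_1$ supplies positivity and a positive source inside $B_{n_0}$ while changing the behavior at infinity only by $O(|x|^{\tau_-})$. By \eqref{pt r2+}, for $|x|$ large it suffices to prove $\cL_\mu u\ge C|x|^\theta u^p$. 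On the finite ball we use the freedom of a small scaling $u\mapsto tu$: when $p>1$, taking $t$ small makes $t^p\ll t$; when $p<1$, we instead take $t$ large. At infinity the scaling has to be compatible with the profile's homogeneity.

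\textbf{Case $(A)$, $p>\max\{0,p^*_{\mu,\theta}\}$.} Take $u=t(\psi_{\tau_-}-\psi_{\tau_--\epsilon}+Mw_1)$ with small $\epsilon\in(0,\epsilon_0)$, which is the supersolution $\bar u_1$ from the proof of Theorem \ref{pr fun-fw}. Then $\cL_\mu u\ge ct|x|^{\tau_--2-\epsilon}$ at infinity, while $Wu^p\le Ct^p|x|^{\theta+p\tau_-}$. The condition $p>p^*_{\mu,\theta}$ is exactly $\theta+p\tau_-<\tau_--2$, so choosing $\epsilon$ small gives the right order. For $p>1$, small $t$ closes the estimate. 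For $p^*<p<1$ (possible when $\theta<-2$) we take $t$ large, and the inequality still holds globally because the exponent gap dominates. Under $(\cH_0)$ only, we replace $\psi_{\tau_-}-\psi_{\tau_--\epsilon}$ by $\psi_{\tau_-+\epsilon'}$, accepting a slightly worse lower bound. This is delicate, since \eqref{es-be-1} requires exact $|x|^{\tau_-}$ decay, so I would instead use the minimal Poisson solution $v_g$ with $g=(1+|x|)^{\tau_--2-\epsilon}$ given by Corollary \ref{existence poisson}(ii).

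\textbf{Cases $(B1),(B2),(C1),(C2)$, non-critical $p$.} Set $\tau=-\frac{2+\theta}{p-1}$. Each hypothesis is arranged so that $\tau\in(\tau_-,\tau_+)$, hence $\beta_0(\tau)>0$, and by \eqref{est-1} $\cL_\mu\psi_\tau\ge\frac12\beta_0(\tau)|x|^{\tau-2}=\frac12\beta_0|x|^{\theta+p\tau}$. More cleanly, we take $u=v_g$ from Proposition \ref{pr 3.1} with $g=c(1+|x|)^{\tau-2}$, so that $u\asymp(1+|x|)^\tau$ globally and $\cL_\mu(tu)=tg$. We then need $tg\ge Ct^p(1+|x|)^{\theta+p\tau}$ everywhere, and since $\theta+p\tau=\tau-2$ this is the inequality $t^{1-p}\ge C$, achievable for $p\ne1$. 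This also yields \eqref{es-be-2}.

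\textbf{Critical cases, the main obstacle.} These are $p=p^*\in(0,1)$, where $\tau=\tau_-$, and $p=p^\#>1$, where $\tau=\tau_+$; here $\beta_0(\tau)=0$. We try $u=t\psi_{\tau_\mp}(\log(e+|x|^2))^{\gamma}$ with $\gamma=-\frac1{p-1}$. The calculation behind $\chi$ gives
$$\cL_\mu u\approx t\gamma(2\tau+d-2)\cdot 2|x|^{\tau-2}(\log)^{\gamma-1},$$
and the coefficient $2\tau+d-2$ equals $\mp(d-2)\sqrt{1+\mu}$. This must be positive with the right sign of $\gamma$. For $p^*<1$ we have $\gamma>0$ and the factor $-(\cdot)<0$, so the sign needs checking. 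If it comes out wrong, we use $(\log)^{-\gamma}$-type corrections or add $\psi_\tau$ with shifted exponent. The right-hand side satisfies $Wu^p\lesssim t^p|x|^{\theta+p\tau}(\log)^{p\gamma}$, where $\theta+p\tau=\tau-2$ and $p\gamma=\gamma-1$, so both sides have the same order and $t^{1-p}\ge C$ closes the estimate. The main work is verifying the sign of the log-derivative term in each critical subcase and controlling the $O(|x|^{\tau-2-\epsilon_0})$ error against $(\log)^{\gamma-1}$. The core is then patched as before with $Mw_1$, and the claimed asymptotics \eqref{es-be-3} and \eqref{es-be-4} hold by construction.
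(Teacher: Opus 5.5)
Your architecture matches the paper's (Propositions \ref{pr 3-ex}, \ref{pr 2-ex}, \ref{pr 1-ex}): a power or power--log profile at infinity, plus a multiple of $w_1$ for the core, plus the scaling $t^{1-p}$. For $(A)$ you use $\psi_{\tau_-}-\psi_{\tau_--\epsilon}$ where the paper uses $t_pw_1-(1+|x|)^{2+\theta+p\tau_-}$; both work under $(\cH_1)$, as do your non-critical arguments for $(B1)$, $(B2)$, $(C2)$.

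The genuine gap is in the critical cases: your first-order expansion has the wrong sign. For $\beta_0(\tau)=0$ and $u=\psi_\tau\,(\log(e+|x|^2))^{\gamma}$,
\[
\cL_\mu u=-2\gamma\,(2\tau+d-2)\,|x|^{\tau-2}\big(\log(e+|x|^2)\big)^{\gamma-1}\big(1+o(1)\big).
\]
This is Lemma \ref{lm 5.1} with $\sigma=2\gamma$ and $\beta_{\pm,1}(\sigma)=-\sigma(d+2\tau_\pm-2)$. Since $2\tau_\mp+d-2=\mp(d-2)\sqrt{1+\mu}$, it is positive iff $\gamma>0$ at $\tau_-$ and $\gamma<0$ at $\tau_+$. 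That is exactly the sign of $\gamma=\frac1{1-p}$ under $p^*_{\mu,\theta}\in(0,1)$, resp. $p^\#_{\mu,\theta}>1$.

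With your sign, both critical cases would fail, not only the one you flag. Your fallbacks cannot repair this. A factor $(\log)^{s}$ beats $Wu^p\lesssim|x|^{\tau-2}(\log)^{ps}$ only if $s(1-p)\ge1$, which excludes $(\log)^{-\gamma}$. Shifting the power exponent moves the profile off $|x|^{\tau_\mp}$. So the decisive verification is missing; with the sign corrected, your ansatz becomes the paper's $\psi_{\mp,\sigma}+t\,w_1$ with $\sigma=\frac{2}{1-p}$.

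Two caveats on this step. First, it needs $(\cH_1)$, as Proposition \ref{pr 1-ex} assumes: under $(\cH_0)$ the error $\mu(H_0-\overline H_0)u$ is only $o(|x|^{\tau-2}(\log)^{\gamma})$ and can swamp the $(\log)^{\gamma-1}$ term. Second, it yields $u\asymp|x|^{\tau}(\ln|x|)^{-1/(p-1)}$, as in (\ref{es-cri-1}), whose log exponent is opposite to the printed (\ref{es-be-3})--(\ref{es-be-4}). The printed (\ref{es-be-3}) even contradicts $u\ge c\,\Phi_{d,\mu}(\cdot,0)\asymp|x|^{\tau_-}$, so ``hold by construction'' is true only for the corrected exponent.

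Separately, your claim that each hypothesis places $\tau=-\frac{2+\theta}{p-1}$ in $(\tau_-,\tau_+)$ is false for $(C1)$. For $\mu>0$, $\theta<-2$ one has $p^*_{\mu,\theta}<1<p^\#_{\mu,\theta}$. Then $\tau<\tau_-$ for $p\in(\max\{0,p^*_{\mu,\theta}\},1)$, and $\tau>\tau_+$ for $p\in(1,p^\#_{\mu,\theta})$. So $\beta_0(\tau)<0$ on the whole printed range, and $\psi_\tau$ is not a supersolution. The method gives (\ref{es-be-2}) precisely for $p\in(0,p^*_{\mu,\theta})\cup(p^\#_{\mu,\theta},+\infty)$. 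The paper's proof of $(C1)$ contains the same inversion. For $p<1$ the printed claim cannot hold at all: every positive supersolution dominates $c\,\Phi_{d,\mu}(\cdot,0)$, which by (\ref{2.1-E-bf1}) decays more slowly than $|x|^{\tau}$.

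Finally, your fallback for $(A)$ under $(\cH_0)$ alone invokes Corollary \ref{existence poisson}(ii), whose hypotheses include $(\cH_1)$; so does your ``cleaner'' route via Proposition \ref{pr 3.1}. The paper's Proposition \ref{pr 3-ex} likewise uses $w_1\asymp(1+|x|)^{\tau_-}$, which Theorem \ref{pr fun-fw} provides only under $(\cH_1)$. So neither argument gives the exact profile (\ref{es-be-1}) under $(\cH_0)$. In the non-critical cases, however, your direct route $\psi_\tau+Mw_1$ does work under $(\cH_0)$, since there only $w_1\lesssim|x|^{\tau}$ is needed.
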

\medskip

When $p\in(0, 1)$, the solution of (\ref{eq 1.1-ext}) could be refined to be  the one of 
\begin{equation}\label{eq 1.1-ext-ex}
	\left\{\arraycolsep=1pt
	\begin{array}{lll}
		-\Delta u+ \mu H_0  u= W u^p \quad
		&{\rm in}\ \  \Z^d, \\[2mm]
		\phantom{   }
		\qquad  \qquad\quad u\gneqq 0  \quad  &{\rm   in} \ \  \Z^d, 
	\end{array}
	\right.
\end{equation}

\begin{corollary}\label{teo cri ex 2}
	Let the assumptions of Theorem  \ref{teo cri ex 1}  hold 
	and
	\begin{equation}\label{pt r1-l}
		\limsup_{x\in\Z^d,|x|\to0^+}W(x)|x|^{-\theta}<+\infty.
	\end{equation}
	
	\begin{itemize}
		\item[$(A')$] Let $\mu\geq \mu_0$, $\theta<-2$ and
			$$H_0<\overline{H}_0\quad{\rm in}\ \Z^d\ \,{\rm if}\ \mu=\mu_0. $$
		
		If  $p\in(  \max\{0,p^*_{\mu,\theta}),1)$,  then problem (\ref{eq 1.1-ext-ex})  has a positive  solution
		$u_0$ satisfying (\ref{es-be-1}).

		\item[$(B')$] 
		Let $\mu\in(\mu_0,\, 0),$ $\theta\in(\tau_--2, -2).  $

		If   $p\in( \max\{0,p^\#_{\mu,\theta}\}, p^*_{\mu,\theta})$,  then  problem (\ref{eq 1.1-ext-ex})    has a  positive classical solution $u_1$ verifying (\ref{es-be-2}). 
		
		If   $ p= p^*_{\mu,\theta}\in(0,1)$, then problem (\ref{eq 1.1-ext-ex})    has a  positive classical solution $u_1$  verifying (\ref{es-be-3}).

		\item[$(C1')$]  Let $\mu>0$ and $\theta<-2. $

		If   $p\in( \max\{0,p^*_{\mu,\theta}\}, 1)$,  then  problem (\ref{eq 1.1-ext-ex})    has a  positive  solution $u$ verifying (\ref{es-be-2}). 
		
		If  $\tau_--2<\theta<-2 $ and $p= p^*_{\mu,\theta}\in(0,1)$, then   (\ref{eq 1.1-ext-ex})    has a  positive   solution $u$ verifying (\ref{es-be-3}).
		
		\item[$(C2')$] Let $\mu>0$ and $-2<\theta<\tau_+-2. $ 
		
		If   $p\in(0,  p^\#_{\mu,\theta})$,  then  problem (\ref{eq 1.1-ext-ex})    has a  positive  solution $u$ verifying (\ref{es-be-2}). 
	\end{itemize}
	
\end{corollary}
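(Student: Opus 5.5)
The plan is to treat $p\in(0,1)$ by monotone iteration between an ordered pair of a subsolution and a supersolution for the equation $\cL_\mu u=Wu^p$. The supersolution is the one built in Theorem \ref{teo cri ex 1}. For $p<1$ the nonlinearity $u\mapsto Wu^p$ is nondecreasing and sublinear, and this is what makes the scheme work. Let $\bar u$ be the positive supersolution from the relevant case $(A)$, $(B1)$, $(C1)$ or $(C2)$ of Theorem \ref{teo cri ex 1}; it satisfies $\cL_\mu \bar u\ge W\bar u^p$ and has the growth (\ref{es-be-1}), (\ref{es-be-2}) or (\ref{es-be-3}). Multiplying by $t\ge1$ gives $\cL_\mu(t\bar u)\ge t^{1-p}W(t\bar u)^p\ge W(t\bar u)^p$, so every $t\bar u$ with $t\ge1$ is again a supersolution.

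For the subsolution I would take $\underline u=s\,v_g$, where $v_g$ is the minimal solution given by Proposition \ref{pr 3.1} or Corollary \ref{existence poisson}(ii) with a comparison source $g\asymp \min\{1,(1+|x|)^{\theta+p\sigma}\}$. Here $\sigma$ is the decay exponent of $\bar u$. Hypothesis (\ref{pt r1-l}), together with (\ref{pt r2+}), gives $W\lesssim(1+|x|)^\theta$ globally, so $W\bar u^p\le C g$. Summability of $g$ against $(1+|x|)^{\tau_-}$, or the pointwise form (\ref{g bound1-upp}), is exactly what the exponent restrictions on $p$ and $\theta$ encode. The same exponent bookkeeping as in Theorem \ref{teo cri ex 1} should show that $v_g$ has the same order as $\bar u$ at infinity. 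Then $\cL_\mu(sv_g)=sg\le W(sv_g)^p$ holds for $s$ small, because $p<1$ lets $s^{p}$ dominate $s$, and because $W\bar u^p$ and $g$ are comparable. This is where I need the lower bound $W\gtrsim|x|^\theta$ near infinity; if only the upper bound is available, I would instead take $g=W\bar u^p$ itself, so $v_g$ is defined and $v_g\le C\bar u$ by comparison. Since $v_g\ge c(1+|x|)^{\tau_-}$ by Corollary \ref{existence poisson}(i), I can then choose $s$ small and $t$ large so that $s v_g\le t\bar u$ everywhere, with both functions having matching rates.

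Next I would iterate on finite balls. Set $u_0=s v_g$ and let $u_{k+1}$ be the minimal solution of $\cL_\mu u=Wu_k^p$ on $\Z^d$, obtained from Proposition \ref{pr 3.1} applied to the source $Wu_k^p\le W(t\bar u)^p$. Its summability follows from $\sum W\bar u^p(1+|x|)^{\tau_-}\le\sum \bar u\,\cL_\mu\Phi\ldots$; more simply, the minimal solution lies below the supersolution $t\bar u$. The comparison principle, Corollary \ref{cr comp}, which applies because all functions are $O(\Phi_{d,\mu})$-comparable at infinity, gives monotonicity by induction: $u_0\le u_1\le\cdots\le t\bar u$. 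The pointwise limit $u$ solves $\cL_\mu u=Wu^p$, because each equation involves only finitely many neighbours at a point. The limit also inherits the two-sided bounds $sv_g\le u\le t\bar u$, which give (\ref{es-be-1}), (\ref{es-be-2}) or (\ref{es-be-3}).

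The main obstacle is verifying that $sv_g$ is a genuine subsolution globally, including near the origin and in the critical logarithmic case $p=p^*_{\mu,\theta}$ in $(B')$ and $(C1')$. There the rate $|x|^{\tau_-}(\ln|x|)^{1/(1-p)}$ must be reproduced by a Poisson solution whose source has borderline summability. I would first handle this by making the subsolution the constructed supersolution-type profile itself scaled by a small $s$. The computation from Theorem \ref{teo cri ex 1} with reversed inequality, using that $p<1$ turns small multiples into subsolutions, would give the needed bound off a large ball. Inside that ball I would patch with the finite-domain comparison of Lemma \ref{com lm}.
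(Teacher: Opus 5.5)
\textbf{Where the argument breaks: the first bullet of $(C1')$.} You assert that ``the same exponent bookkeeping'' makes $v_g$ of the same order as $\bar u$. In the first bullet of $(C1')$ this is false, and the step cannot be repaired there.

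For $\mu>0$, $\theta<-2$ and $p\in(\max\{0,p^*_{\mu,\theta}\},1)$, the condition $p>p^*_{\mu,\theta}$ is equivalent to $2+\theta<\tau_-(1-p)$, that is, $\alpha_p:=\frac{2+\theta}{1-p}<\tau_-$. Consequences:
\begin{itemize}
\item The source $g\asymp(1+|x|)^{\alpha_p-2}$ falls under Corollary \ref{existence poisson}$(ii)$ with $\tau<\tau_-$. So $v_g\asymp(1+|x|)^{\tau_-}\gg |x|^{\alpha_p}$, and $sv_g\le t\bar u$ is impossible.
\item No positive solution can satisfy (\ref{es-be-2}) in this range at all. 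As in Corollary \ref{existence poisson}$(i)$, any positive supersolution satisfies $u\ge W(x_0)u(x_0)^p\,\Phi_{d,\mu}(\cdot,x_0)\ge c_\varepsilon |x|^{\tau_--\varepsilon}$ for every small $\varepsilon>0$, by Theorem \ref{pr fun-fw}. This decays more slowly than $|x|^{\alpha_p}$.
\item Hence the supersolution with rate (\ref{es-be-2}) that you import from Theorem \ref{teo cri ex 1}$(C1)$ does not exist.
\end{itemize}
The paper's proof has the same slip. It states that $\alpha_p\in(\tau_-,0)$ corresponds to $p\in(p^*_{\mu,\theta},1)$, whereas it actually corresponds to $p\in(0,p^*_{\mu,\theta})$. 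What Proposition \ref{pr 2-ex} really gives is this bullet for $p\in(0,p^*_{\mu,\theta})$ with $\theta>\tau_--2$. For $p\in(p^*_{\mu,\theta},1)$ the correct decay is $|x|^{\tau_-}$, which is exactly $(A')$.

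\textbf{The monotonicity of the iteration is misjustified.} You cite Corollary \ref{cr comp} for $u_0\le u_1\le\cdots\le t\bar u$. That corollary needs $\liminf_{|x|\to\infty}(u_{k+1}-u_k)/\Phi_{d,\mu}(x,0)\ge0$. Knowing both functions are ``comparable to $\Phi_{d,\mu}$'' says nothing about the sign of their difference. Moreover, in $(B')$, $(C2')$ and the logarithmic cases the iterates are $\gg\Phi_{d,\mu}$ at infinity.

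In the non-critical cases your own construction fixes this. Every $u_k$, including $u_0=sv_g=v_{sg}$, is a minimal Poisson solution, i.e.\ a limit of Dirichlet problems on balls. By Lemma \ref{com lm} on balls, such solutions are monotone in the source and lie below every nonnegative supersolution. So $sg\le Wu_0^p$ gives $u_0\le u_1$, and induction gives the rest.

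In the logarithmic cases your fallback subsolution $sU$ is not a minimal solution, so you need one of two things:
\begin{itemize}
\item a Phragm\'en--Lindel\"of comparison against a positive global supersolution $\rho\asymp(1+|x|)^{\tau_-+\eta}$ with $0<\eta<\tau_+-\tau_-$, as in the uniqueness part of Theorem \ref{pr fun-fw}; or
\item the sub/super iteration on balls with boundary data $sU$, followed by a diagonal limit.
\end{itemize}
No ``patching'' via Lemma \ref{com lm} is needed for $sU$ itself. Once $\cL_\mu U\asymp WU^p$ on all of $\Z^d$ (the paper adds $t_1w_1$ to $\psi_{-,\sigma_1}$ for this), $sU$ and $tU$ are pointwise sub- and supersolutions because $p<1$. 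That single-profile argument is the paper's route, and it makes $v_g$ unnecessary.

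\textbf{The fallback for $W$ bounded only from above does not work.} The subsolution inequality needs $v_g\gtrsim\bar u$ on $\{W>0\}$, not merely $v_g\gtrsim|x|^{\tau_-}$. So $(\cW_\theta)$ and $W>0$ are genuinely required, as Propositions \ref{pr 1-ex}--\ref{pr 3-ex} assume.
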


\subsection{Non-existence}

This section  is devoted to the nonexistence of problem \ref{eq 1.1-ext}. 
Given $\mu\geq\mu_0$ and $\theta\in\R$,  denote by
$$p^\ddag_{\mu,\theta} = \frac{2 +\theta-\tau_+}{-\tau_-},$$
then we have that
$$p^\ddag_{\mu ,\theta}= p^*_{\mu ,\theta}=p^\#_{\mu ,\theta}\quad {\rm for}\ \mu=\mu_0$$ 
and 
$$ p^\ddag_{\mu,\theta}<p^*_{\mu,\theta}\ \ \ {\rm for}\ \ \mu>\mu_0.$$

\begin{proposition}\label{pr 4.1-nonex}
	Assume that  $d\geq 3$ and 
	$$\mu\geq \mu_0,\ \text{ $H_0 \in C(\Z^d)$ verifies  $(\cH_1)$}. $$

	Let     $  W\in C(\Z^d)$ verify   $(\cW_\theta )$   and $w_\infty$ be defined in (\ref{const q=1}) 
	and one of the follows holds
	\begin{itemize}
		\item[ $(a)$]   
		\begin{equation}\label{cond non-1}
			\theta>\tau_+-2,\quad\,   p\leq p^\ddag_{\mu ,\theta};
		\end{equation}

		\item[ $(b)$]   
		$\theta>\tau_--2,$    $p\in\big(\max\{0,p^\ddag_{\mu ,\theta}\},  p^*_{\mu ,\theta}\big)$ with more assumption  
		\begin{equation}\label{cond non-2}
			p\tau_+>\tau_+-2-\theta\quad  {\rm if}\ \ p<1; 
		\end{equation} 
		
		\item[ $(c)$]   
		\begin{equation}\label{cond non-3}
			\theta>-2,    \quad p= p^*_{\mu ,\theta}>1;
		\end{equation} 
		
		\item[ $(d)$] 
		\begin{equation}\label{cond non-4}
			\theta=-2,    \quad p= p^*_{\mu ,\theta}=1\quad \& \quad    w_\infty >\mu+1;
		\end{equation} 
	\end{itemize}
	then problem (\ref{eq 1.1-ext})     has no  positive   solution.

\end{proposition}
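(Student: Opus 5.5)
We argue by contradiction and suppose that $u$ is a positive solution of (\ref{eq 1.1-ext}). By Corollary \ref{cr comp}, $u>0$ in $\Z^d$. The strategy is a bootstrap. Corollary \ref{existence poisson}$(i)$ gives a lower bound $u\ge c(1+|x|)^{\tau_-}$. We feed this into $g:=Wu^p\gneqq0$, which by $(\cW_\theta)$ satisfies $g(x)\ge c|x|^{\theta+p\tau_-}$ for large $|x|$. Since $\cL_\mu u\ge g$, the comparison principle in the proof of Proposition \ref{pr 3.2} shows that $u$ dominates the minimal solution $v_g$. It also forces the admissibility condition \eqref{con 2.1-0}, namely $\sum_x g(x)(1+|x|)^{\tau_-}<+\infty$; otherwise Proposition \ref{pr 3.2} already gives the contradiction. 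So at each stage either the sum $\sum |x|^{\theta+p\tau_-+\tau_-}$ diverges, and we are done, or we obtain an improved lower bound on $u$ and iterate.

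Case $(a)$ is immediate. Here $p\le p^\ddag_{\mu,\theta}$ means $p(-\tau_-)\le 2+\theta-\tau_+$. Using $\tau_++\tau_-=2-d$, this gives $\theta+p\tau_-\ge \tau_+-2=-\tau_--d$, which is exactly the divergence criterion (\ref{con 2.1-1-asy}). Proposition \ref{pr 3.2} then excludes solutions. The assumption $\theta>\tau_+-2$ ensures the range of $p$ is nonempty and positive.

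For case $(b)$, $g\gtrsim |x|^{\theta+p\tau_-}=|x|^{\tau_1-2}$ with $\tau_1:=\theta+2+p\tau_-$. The condition $p>p^\ddag_{\mu,\theta}$ means $\tau_1<\tau_+$, and $p<p^*_{\mu,\theta}$ means $\tau_1>\tau_-$. Applying the lower-bound part of Proposition \ref{pr 3.1} to the truncation $\min\{g,c(1+|x|)^{\tau_1-2}\}$, we get $u\ge c(1+|x|)^{\tau_1}$, and $\tau_1>\tau_-$ improves the exponent. We then iterate with $\tau_{k+1}=\theta+2+p\tau_k$. If $p<1$ the map is a contraction with fixed point $\frac{2+\theta}{1-p}$. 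Condition (\ref{cond non-2}) is exactly $p\tau_++\theta+2>\tau_+$, so the fixed point exceeds $\tau_+$, and after finitely many steps $\tau_{k+1}\ge\tau_+$. If $p\ge1$ the increments $\tau_{k+1}-\tau_k=(p-1)\tau_k+\theta+2$ stay bounded below by a positive constant as long as $\tau_k>\tau_-$. This holds because $(p-1)\tau_-+\theta+2>0\iff p<p^*$, and the sequence increases, so again the exponent reaches $\tau_+$ in finitely many steps. Once $g\gtrsim|x|^{\tau_+-2}$, (\ref{con 2.1-1-asy}) and Proposition \ref{pr 3.2} give the contradiction. We must check that the iteration never lands exactly on $\tau_\pm$ in a way that breaks Proposition \ref{pr 3.1}; if it does, we slightly lower the exponent first.

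Cases $(c)$ and $(d)$ are the critical ones and the main obstacle, since $\tau_1=\tau_-$ and the power iteration stalls. For $(c)$ we would gain logarithms instead. From $u\ge c|x|^{\tau_-}$ we get $g\gtrsim |x|^{\tau_--2}\cdot|x|^{\tau_-}|x|^{-\tau_-}$. We would test against $\Phi_{d,\mu}(\cdot,x_0)\asymp|x|^{\tau_-}$ (Theorem \ref{pr fun-fw}$(b)$) on dyadic annuli, as in Proposition \ref{pr 3.2}. Then $\sum g|x|^{\tau_-}\ge\sum |x|^{\theta+(p+1)\tau_-}=\sum|x|^{-d}\cdot|x|^{(\ldots)}$, and at $p=p^*$ one checks $\theta+(p+1)\tau_-=\tau_--2+\tau_-$. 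Comparing with $-d=\tau_++\tau_--2$ shows divergence exactly when the sum is logarithmic, or, if it converges, we refine instead. The refinement is to construct subsolutions $|x|^{\tau_-}(\ln|x|)^{\gamma}$, as with $\chi$ in the proof of Theorem \ref{pr fun-fw}, to show $u\gtrsim|x|^{\tau_-}(\ln|x|)^{\gamma_k}$. Since $p>1$, the exponent recursion $\gamma_{k+1}=p\gamma_k+1$ blows up, until $\sum g|x|^{\tau_-}\sim\sum k^{p\gamma_k-1}$ (summed over dyadic shells) diverges. For $(d)$, $p=1$ and $\theta=-2$, so the inequality reads $-\Delta u+(\mu H_0-W)u\ge0$. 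Since $w_\infty>\mu+1$, this gives $-\Delta u-\mu'H_0u\ge0$ outside a ball with effective $\mu'<-1$, and we conclude via the argument of Proposition \ref{pr 3.3}, after handling the compact region by adding a multiple of the Green function. The delicate point is making the logarithmic subsolution comparison rigorous on $\Z^d$, using the error term from (\ref{est-1-im}).
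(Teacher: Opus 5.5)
Your cases $(a)$, $(b)$ and $(d)$ follow the paper's route. You use the Green-kernel lower bound $u\ge c(1+|x|)^{\tau_-}$, then the power bootstrap $\tau_{k+1}=\theta+2+p\tau_k$ through the lower bound of Proposition~\ref{pr 3.1} until $p\tau_k+\theta\ge\tau_+-2$ triggers Proposition~\ref{pr 3.2}, and for $(d)$ an effective Hardy coefficient below $-1$ with Proposition~\ref{pr 3.3}. Case $(c)$, however, has a genuine gap: the logarithmic bootstrap can never end in the divergence required by Proposition~\ref{pr 3.2}.

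Case $(c)$ only matters for $\mu>\mu_0$. For $\mu=\mu_0$ one has $p^*_{\mu,\theta}=p^\ddag_{\mu,\theta}$ and $\theta>-2>\tau_+-2$, so it is part of $(a)$. For $\mu>\mu_0$ one has $d-2+2\tau_-=-(d-2)\sqrt{1+\mu}<0$. Suppose $u\gtrsim|x|^{\tau_-}(\ln|x|)^{\gamma}$. Then $g=Wu^{p}\gtrsim|x|^{\tau_--2}(\ln|x|)^{p\gamma}$. The shell $\{2^k\le|x|<2^{k+1}\}$ then contributes about $2^{-k(d-2)\sqrt{1+\mu}}\,k^{p\gamma}$ to $\sum_x g(x)(1+|x|)^{\tau_-}$, which is summable for every $\gamma$. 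Your count $\sum_k k^{p\gamma_k-1}$ implicitly assumes $d-2+2\tau_-=0$, i.e. $\tau_-=\tau_+$, which is false for $\mu>\mu_0$. Logarithmic gains never become the power gain Proposition~\ref{pr 3.2} needs, so the iteration as stated yields no contradiction.

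The missing idea, which is what the paper uses, is to absorb part of the nonlinearity into the Hardy potential.

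At $p=p^*_{\mu,\theta}$ one has $\theta+(p-1)\tau_-=-2$. So $u\ge c|x|^{\tau_-}$ and $(\cW_\theta)$ give $\tfrac12Wu^{p-1}\ge\sigma_0H_0$ outside a ball, for some small $\sigma_0\in(0,\mu+1)$. Hence $-\Delta u+(\mu-\sigma_0)H_0u\ge\tfrac12Wu^p$ there.

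Since $\mu\mapsto\tau_-(\mu)$ is strictly decreasing, the shifted operator's Green function decays like $|x|^{\tau_-(\mu-\sigma_0)}$. This is a genuine power gain over $|x|^{\tau_-(\mu)}$. Equivalently, $p=p^*_{\mu,\theta}<p^*_{\mu-\sigma_0,\theta}$ is strictly subcritical for the shifted operator. Your $(a)$/$(b)$ argument then applies to it and gives the contradiction; no extra condition is needed because $p>1$.

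Your logarithmic construction could also be rescued this way. A single gain $u\gtrsim|x|^{\tau_-}\ln|x|$ already gives $Wu^{p-1}\ge MH_0$ near infinity for any $M$, which reduces to your argument for $(d)$. Either way, the contradiction has to come from such an absorption, not from the divergence criterion of Proposition~\ref{pr 3.2}.
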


\begin{lemma}\label{lm 2.1-ex}
	Let $p>0$, $ \theta\in\R,\, \tau_0\not=0$ verify
	either
	$$\tau_0<0, \ \quad    \theta>\tau_0-2 \quad{\rm  and}   \quad  0<p< 1+\frac{2 +\theta}{-\tau_0}  $$
	or
	$$\ \ \tau_0>0, \ \quad    \theta\in\R \quad\ {\rm  and}   \quad   p>\max\Big\{0, 1+\frac{2 +\theta}{-\tau_0} \Big\},$$
	and $\{\tau_j\}_j$ be a sequence defined by  
	$$\tau_{j+1}=2+\theta +p\tau_j,\quad j\in\N_+,$$
	where $\N_+$ be the set of positive integers.
	
	Then $j\in\N\to \tau_j$ is  strictly increasing and   for any $\bar \tau>\tau_0$ if $p\geq1$ or
	for any $\bar \tau\in(\tau_0, \frac{2 +\theta}{1-p})$ if $p\in(0,1)$,   there exists $j_0\in\N $  such that
	\begin{equation}\label{2.3}
		\tau_{j_0}\ge \bar \tau\quad {\rm and}\quad \tau_{j_0-1}<\bar \tau.
	\end{equation}
\end{lemma}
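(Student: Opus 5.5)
Since $\tau_{j+1}=2+\theta+p\tau_j$ is affine, I will work with the deviation from its fixed point. For $p\neq1$ put $\tau^\ast:=\frac{2+\theta}{1-p}$. Then
$$\tau_{j+1}-\tau^\ast=p(\tau_j-\tau^\ast),\qquad\text{so}\qquad \tau_j-\tau^\ast=p^j(\tau_0-\tau^\ast).$$
For $p=1$ we simply have $\tau_j=\tau_0+j(2+\theta)$. Monotonicity comes down to the sign of
$$\tau_1-\tau_0=2+\theta+(p-1)\tau_0,$$
because $\tau_{j+1}-\tau_j=p(\tau_j-\tau_{j-1})$ with $p>0$, so every increment has the same sign as $\tau_1-\tau_0$. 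The key step is therefore to check $\tau_1>\tau_0$ in each hypothesis. The remaining work is the existence of the crossing index $j_0$.

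\emph{Case $\tau_0<0$, $\theta>\tau_0-2$, $p<1+\frac{2+\theta}{-\tau_0}$.} Dividing by $-\tau_0>0$, the condition $2+\theta+(p-1)\tau_0>0$ is equivalent to $p-1<\frac{2+\theta}{-\tau_0}$, which is exactly the hypothesis. (The constraint $\theta>\tau_0-2$ only guarantees that the upper bound on $p$ exceeds $0$.)

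\emph{Case $\tau_0>0$, $p>1+\frac{2+\theta}{-\tau_0}$.} Multiplying by $\tau_0>0$ gives $(p-1)\tau_0>-(2+\theta)$, again equivalent to $\tau_1>\tau_0$.

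So in both cases the sequence is strictly increasing.

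For the crossing I split by $p$.

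\emph{Subcase $p>1$.} Here $\tau_0-\tau^\ast$ has the sign of $\tau_1-\tau_0=(1-p)(\tau^\ast-\tau_0)$, so $\tau_0-\tau^\ast>0$. Since $p^j\to\infty$, this gives $\tau_j\to+\infty$.

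\emph{Subcase $p=1$.} Monotonicity means $2+\theta>0$, so $\tau_j\to+\infty$ linearly.

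\emph{Subcase $0<p<1$.} Now $\tau^\ast-\tau_0>0$, so $\tau_j\uparrow\tau^\ast$, since $p^j\to0$. This is why $\bar\tau$ must be restricted to $(\tau_0,\tau^\ast)$.

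In every subcase $\sup_j\tau_j>\bar\tau$ and $\tau_0<\bar\tau$. I then take $j_0$ to be the least $j$ with $\tau_j\ge\bar\tau$. Then $j_0\ge1$, and by minimality $\tau_{j_0-1}<\bar\tau$, which is (\ref{2.3}).

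The main obstacle is just bookkeeping: the $p\ge1$ case must be legitimately reached in each hypothesis and yield $\tau^\ast<\tau_0$ (or $2+\theta>0$). I handle this uniformly via the identity $\tau_1-\tau_0=(1-p)(\tau^\ast-\tau_0)$, rather than by separate case analysis on the sign of $\tau_0$.
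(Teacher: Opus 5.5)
Your proof is correct and follows the paper's argument. The paper also derives $\tau_1-\tau_0=2+\theta+(p-1)\tau_0>0$ from the hypotheses, propagates it through $\tau_j-\tau_{j-1}=p^{j-1}(\tau_1-\tau_0)$, and for $p\in(0,1)$ sums the geometric series to get $\tau_j\to\frac{2+\theta}{1-p}$; this is your fixed-point identity $\tau_j-\tau^\ast=p^j(\tau_0-\tau^\ast)$ in another form, and your version is more complete, since it checks $\tau_1>\tau_0$ under each hypothesis, proves divergence for $p\ge1$ (which the paper only calls straightforward), and fixes $j_0$ as the least index with $\tau_j\ge\bar\tau$.
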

\noindent{\bf Proof.}
Under the assumptions, we have that
$$
2 +\theta+\tau_0(p-1)>0,
$$
then
$$\tau_1-\tau_0=2 +\theta+\tau_0(p-1)>0$$
and
\begin{equation}\label{2.2}
	\tau_j-\tau_{j-1} = p(\tau_{j-1}-\tau_{j-2})=p^{j-1} (\tau_1-\tau_0)>0.
\end{equation}
Then the sequence $\{\tau_j\}_j$ is strictly increasing.

If $p\ge1 $, our conclusions are straightforward.

If $p\in(0,1)$,
in the case when $\tau_1\ge0$, we are done, and in the case when $\tau_1<0$,
it follows from (\ref{2.2}) that
\begin{eqnarray*}
	\tau_j  &=&  \frac{1-p^j}{1-p}(\tau_1-\tau_0)+\tau_0\\
	&\to&\frac{1}{1-p}(\tau_1-\tau_0)+\tau_0=\frac{2 +\theta}{1-p}\quad\  {\rm as}\ \, j\to+\infty,
\end{eqnarray*}
then there exists $j_0>0$ satisfying (\ref{2.3}).
\hfill$\Box$\medskip

\noindent {\bf Proof of Proposition \ref{pr 4.1-nonex}.}   By contradiction,   $u_0\in C(\Z^d )$ is assumed to be a nonnegative nonzero function verifying    (\ref{eq 1.1-ext}).   By maximum principle, we obtain that 
$$u_0>0\quad {\rm in}\ \, \Z^d. $$

{\it  We claim that there exists $c_0>0$ such that  
	$$  u_0(x)\geq c_0 |x|^{\tau_-}>0\quad {\rm in}\ \, \Z^d.$$}

In fact,   from (\ref{pt r2}), there exist an integer $n_0\geq 2$ and a  real number $w_\infty>0$ such  that 
$$ W(x)\geq w_\infty |x|^{\theta}\quad{\rm for}\ x\in\Z^d\setminus Q_{\frac12 n_0}. $$
Thus,  we can choose nonzero  function  $f_0$  could be chosen such that  $f_0\leq Wu_0^p$  in $\Z^d$
and its  support   ${\rm supp}(f_0)\subset Q_{n_0}$. 

From the heat kernel and comparison principle, there exists $c_0>0$ such that 
$$u_0(x)\geq \Phi_{d,\mu}[f_0](x)\geq c_0(1+|x|)^{\tau_-}\quad {\rm for}\ \, x\in\Z^d, $$
where we call that $\Phi_{d,\mu}$ is the Green kernel in $\Z^d$.

Let $\tau_0=\tau_-<0$ satisfy that
\begin{equation}\label{sec 3-1.0}
	-\Delta u_0(x)+ \mu H_0   u_0(x)  \geq    w_\infty d_0^p |x|^{p\tau_-+\theta}=w_\infty d_0^p |x|^{\tau_1-2},\quad\forall\, x\in \Z^d\setminus    Q_{n_0},
\end{equation}
where     $\tau_1=p\tau_0+\theta+2$.  Thus, for $p\in(0,   p^*_{\mu,\theta})$, it holds that
$$\tau_1-\tau_0=(p-1)\tau_0+\theta+2>0.$$

\smallskip

{\bf Case 1:} $\mu\geq  \mu_0$,  $\theta> \tau_+-2$ and $p\in(0, p^\ddag_{\mu,\theta}]$.
If $0< p\leq  p^\ddag_{\mu,\theta}$, i.e. $p\tau_0+\theta\geq   \tau_+-2$, then
$$ Wu_0^p\geq w_\infty d_0^p  |x|^{\tau_+-2},\quad\forall x\in \Z^d\setminus  Q_{n_0}$$
and a contradiction follows by Proposition \ref{pr 3.2}  with $f(x)=w_\infty d_0^p |x|^{p\tau_-+\theta}$.   \smallskip

{\bf Case 2:} $\mu> \mu_0$,  $\theta>\tau_--2 $,   $p\in\big(\max\{0,p^\ddag_{\mu ,\theta}\},  p^*_{\mu ,\theta}\big)$  and  
$$p\tau_+>\tau_+-2-\theta\quad{\rm if}\ p<1. $$
Note that  $p^\ddagger_{\mu,\theta}<0$ for $\theta< \tau_+-2$.
For $p\in\big(\max\{0,p^\ddagger_{\mu,\theta}\},\, p^*_{\mu,\theta}\big)$, $p\tau_0+\theta<   \tau_+-2$,
by Proposition \ref{pr 3.1}, there exist $d_1>0$ and integers $n_2> n_1$ such that
$$u_0(x)\geq d_1|x|^{\tau_1},\quad\forall\, x\in \Z^d\setminus   Q_{n_2}, $$
where 
$\tau_1:=p\tau_{0}+\theta+2>\tau_0$ for $p<p^*_{\mu ,\theta}$.

Recall that
$$\tau_j:=p\tau_{j-1}+\theta+2,\quad j\in\N_+,$$
which is an increasing sequence.

If $\tau_{j+1}=\tau_jp+\theta+2\in  (\tau_-,\tau_+)$,
it following by Proposition \ref{pr 3.1}  that there exist integers $d_{j+1}>0$ and $n_{j+1}> n_{j}$ such that
$$u_0(x)\geq d_{j+1} |x|^{\tau_{j+1}}\quad {\rm in}\ \, \Z^d\setminus   Q_{n_{j+1}},$$
where
$$\tau_{j+1}=p\tau_j+2 +\theta>\tau_j.$$
If $p\tau_{j+1}+\theta\geq  \tau_+-2 $, we are done by  Proposition \ref{pr 3.2} .

Now we claim that the iteration must stop after a finite  number of times. It infers by   Lemma \ref{lm 2.1-ex} that
$j\mapsto \tau_j$ is strictly increasing thanks to $ p<p^*_{\mu,\theta}. $

{
	Note that   for $p\geq1$, $\tau_j\to+\infty$, then there exists $j_0\in\N$ such that
	$p\tau_{j_0+1}+\theta\geq  \tau_+-2$ and a contradiction could be derived for
	$1\leq p<p^*_{\mu,\theta}$ if $p^*_{\mu,\theta}>1$, which holds when $\theta>-2$.
	
	For $p\in(0,1)$ and $2+\theta>0$,  $\tau_j\to \tilde \tau_p:=\frac{2+\theta}{1-p}$ as $j\to+\infty$.
	If
	\begin{equation}\label{2.1ppp}
		p\tilde\tau_p  +\theta>\tau_+-2,
	\end{equation}
	then there exists $j_0\in\N$ such that
	$p\tau_{j_0}+\theta \leq \tau_+-2$ and  $p\tau_{j_0+1}+\theta \geq \tau_+-2$. This means we can get a contradiction
	and we are done.
	
	Note that for $p\in(0,1)$, (\ref{2.1ppp}) is equivalent to (\ref{cond non-2}). \smallskip

	{\bf Case  3:} $\mu>\mu_0$,  $\theta>-2$ and $p=p^*_{\mu,\theta}>1$.
	Note  that for some $\sigma_0\in(0,\mu+1)$,
	$$\frac12 Wu_0^{p-1}(x)\geq \sigma_0 H_0(x) ,\quad\forall\, x\in \Z^d\setminus Q_{n_0}.$$
	We can write problem (\ref{eq 1.1-ext}) as follows
	\begin{equation}\label{eq 1.1 new-ex}
		-\Delta u_0+ (\mu-\sigma_0)H_0   u_0\geq \frac12 W  u_0^p\quad{\rm in}\ \,  \Z^d\setminus  Q_{n_0},
	\end{equation}
	with the critical exponent
	$$p^*_{\mu-\sigma_0,\theta}=  1+\frac{2+\theta}{-\tau_-(\mu-\sigma_0)} >    1+\frac{2+\theta}{-\tau_-(\mu)}=p^*_{\mu,\theta}, $$
	where $\mu\in(-1,+\infty)\mapsto \tau_-(\mu)$ is strictly decreasing.
	Thus a contradiction comes from {\bf Case 1 }  and  {\bf Case 1}  for (\ref{eq 1.1 new-ex})
	with $p=p^*_{\mu,\theta}<p^*_{\mu-\sigma_0,\theta}$.\smallskip

	{\bf Case 4:} $\mu>\mu_0$, $\theta=-2 $ and $p=p^*_{\mu,\theta}=1$.
	By the assumption that $\displaystyle  w_\infty:=\liminf_{|x|\to+\infty}W(x)H_0^{-1}(x)>\mu+1$,
	we adjust $n_0>0$ and $w_\infty\in(\mu+1,\, q_\infty)$ such that
	$$W > w_\infty H_0 \quad {\rm for}\ x\in \Z^d\setminus Q_{n_0}.$$
	Then problem  (\ref{sec 3-1.0}) reduces the nonhomogeneous  problem
	$$ -\Delta  u_0+\Big( \mu- w_\infty\Big)H_0 u_0 =f \quad{\rm in}\ \, \Z^d\setminus Q_{n_0},$$
	where $r>0$,  $f \geq W -w_\infty H_0 \geq0$ in $\Z^d\setminus Q_{n_0}$ and $\mu-w_\infty<-1$. While
	the above problem has no positive   solutions for
	$\mu-w_\infty<-1$ from Proposition \ref{pr 3.3}. \hfill$\Box$ \medskip

	\noindent {\bf Proof of  Theorem \ref{teo 2}.}  When $\mu=\mu_0$ and $\theta>\tau_+-2=\tau_--2$, we have that $p^\ddagger_{\mu,\theta}=p^*_{\mu,\theta}$, From Proposition \ref{pr 4.1-nonex}, problem (\ref{eq 1.1-ext-ex}) has no positive solution. \smallskip 
	
	From Proposition \ref{pr 4.1-nonex},  the non-exsitence  holds under the assumptions that 
	$\mu>\mu_0$,  $\theta>\tau_--2$ and \\
	either $p\in(0, p^\ddagger_{\mu,\theta}]$
	\\ or 
	$\max\{0,p^\ddag_{\mu ,\theta}\}<p< p^*_{\mu ,\theta},$
	\begin{equation}\label{cond non-2-0}
		p\tau_+>\tau_+-2-\theta\quad  {\rm when}\ \ p\in(0,1)\cap (p^\ddag_{\mu ,\theta},p^*_{\mu ,\theta}),  
	\end{equation} 
	or
	$$p= p^*_{\mu ,\theta}\quad  {\rm when}\ \ p^*_{\mu ,\theta}>1  $$
	or 
	$$p= p^*_{\mu ,\theta}=1\quad\&\quad   w_\infty>\mu+1,  $$
	where $w_\infty$ is defined in (\ref{const q=1}). 
	
	When $\mu\in(\mu_0,0)$, (\ref{cond non-2-0}) equals to $p<p^\#_{\mu ,\theta}$. 
	Thus, for  $p\in(0,1)$ and $\tau_+-2<\theta\leq -2$,   there holds $0<p^\#_{\mu,\theta} \leq p^*_{\mu,\theta}\leq 1$, which means
	problem (\ref{eq 1.1-ext-ex}) has no positive solution when 
	$$\mu\in(\mu_0,0), \quad \tau_+-2<\theta\leq  -2\quad\text{ and }\ \ p\in(0,p^\#_{\mu,\theta})$$ 
	or
	$$\mu\in(\mu_0,0), \quad  \theta=-2, \quad p=1\quad{\rm and}\ \ w_\infty>\mu+1.  $$

	When $\mu\in(\mu_0,0)$ and $\theta>-2$, there holds $p^\#_{\mu,\theta} >p^*_{\mu,\theta}>1$, then problem (\ref{eq 1.1-ext-ex}) has no positive solution when 
	$$\mu\in(\mu_0,0), \quad \theta>-2\quad\text{ and }\ \ p\in(0,p^*_{\mu,\theta}).$$ 
	
	When $\mu=0$, then (\ref{cond non-2-0}) equals to $\theta>-2$,  $p^*_{\mu ,\theta}>1$ and  problem (\ref{eq 1.1-ext-ex}) has no positive solution when 
	$$\mu=0, \quad \theta>-2\quad\text{ and }\ \ p\in(0,p^*_{\mu,\theta}].$$

	When $\mu\in(0,+\infty)$,    (\ref{cond non-2-0}) equals to $p>p^\#_{\mu ,\theta}$. At this setting, for $\theta>-2$, we have that $p^\#_{\mu,\theta} <1< p^\ddagger_{\mu,\theta}<p^*_{\mu,\theta}$, then problem (\ref{eq 1.1-ext-ex}) has no positive solution when 
	$$\mu>0, \quad \theta>-2\quad\text{ and }\ \ p\in\big(\max\{0,p^\#_{\mu,\theta}\},\, p^*_{\mu,\theta}\big].$$ 
	Particularly, when  
	$$\mu>0, \quad  \theta=-2, \quad p=1\quad{\rm and}\ \ w_\infty>\mu+1 $$ 
	problem (\ref{eq 1.1-ext-ex}) has no positive solution.  
	\hfill$\Box$ 
	\begin{remark}
		For $\tau_--2< \theta<-2$, we have that $p^\#_{\mu,\theta} >1> p^*_{\mu,\theta}>0>p^\ddagger_{\mu,\theta}$, then there is no $p>0$ verifying the assumptions (\ref{cond non-2}) in Proposition \ref{pr 4.1-nonex}.  
	\end{remark}

	\subsection{Existence of super solutions}
	
	\begin{proposition}\label{pr 1-ex}
		Assume that  $d\geq 3$,  $ W\in C(\Z^d)$ verifies   $(\cW_\theta )$ 
		and
		$$\mu>\mu_0,\quad   \quad  \text{ $H_0 \in C(\Z^d)$ verifies  $(\cH_1)$}. $$

		$(i)$   If
		$$\mu>\mu_0,\quad    p= p^*_{\mu,\theta}\in(0,1), $$
		then problem (\ref{eq 1.1-ext-ex})     has a  positive   solution $u_1$ such that 
		\begin{equation}\label{es-cri-1}
			0<\liminf_{|x|\to+\infty}u_1(x) |x| ^{ -\tau_- }\big(\ln (|x|)\big)^{-\frac1{1-p^*_{\mu,\theta}}}\leq \limsup_{|x|\to+\infty}u_1(x) \leq |x| ^{- \tau_- }\big(\ln (|x|)\big)^{-\frac1{1-p^*_{\mu,\theta}}}<+\infty. 
		\end{equation}
		
		$(ii)$ If
		$$\mu\in(\mu_0,+\infty)\setminus\{0\},\quad   p= p^\#_{\mu,\theta}>1, $$
		then problem (\ref{eq 1.1-ext-ex})     has a  positive   solution $u_2$ satisfying \eqref{es-cri-1}.
	\end{proposition}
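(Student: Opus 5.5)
The plan is to build explicit barriers with a logarithmic correction, modelled on the function $\chi$ used for $\mu=-1$ in the proof of Theorem \ref{pr fun-fw}, and then to obtain a true solution of (\ref{eq 1.1-ext-ex}) by monotone iteration. Each iteration step solves a linear Poisson problem through Proposition \ref{pr 3.1} and uses Corollary \ref{cr comp} as the comparison principle. For $\sigma\in\{\tau_-,\tau_+\}$ and $\gamma\in\R$ set
$$\chi_{\sigma,\gamma}(x)=(1+|x|^2)^{\frac{\sigma}{2}}\big(\ln(e+|x|^2)\big)^{\gamma}.$$
I will assume, as in Theorem \ref{teo cri ex 1}, a two-sided bound $W(x)\asymp|x|^{\theta}$ at infinity, so that $W$ is also bounded above. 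The first step is the same Taylor expansion as in (\ref{est-1-im}), now keeping the terms in which a derivative falls on the logarithm. Using $(\cH_1)$ and $\beta_0(\sigma)=0$, I expect
$$\cL_\mu\chi_{\sigma,\gamma}(x)=-2\gamma\,\beta_0'(\sigma)\,|x|^{\sigma-2}\big(\ln|x|\big)^{\gamma-1}\big(1+o(1)\big),\qquad |x|\to\infty ,$$
up to a harmless positive normalising constant (the factor $2$ comes from $\ln(e+|x|^2)\approx 2\ln|x|$). The error terms are $O(|x|^{\sigma-2}(\ln|x|)^{\gamma-2})$ and $O(|x|^{\sigma-2-\epsilon_0}(\ln|x|)^{\gamma})$, and both are of lower order. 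The derivative is $\beta_0'(\sigma)=-(2\sigma+d-2)$, so $\beta_0'(\tau_\mp)=\pm(d-2)\sqrt{1+\mu}$. This is nonzero precisely because $\mu>\mu_0$.

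Case $(i)$: here $p=p^*_{\mu,\theta}\in(0,1)$, which forces $\theta<-2$, and $p\tau_-+\theta=\tau_--2$. I take $\sigma=\tau_-$. Matching $W\chi^p\sim|x|^{\tau_--2}(\ln|x|)^{p\gamma}$ with the leading term above requires $\gamma-1=p\gamma$, that is $\gamma=\frac1{1-p}>0$. With this choice the coefficient $-2\gamma\beta_0'(\tau_-)$ is negative, so the expansion has to be rechecked with the correct sign convention. If this sign is indeed adverse, I will use $\chi_{\tau_-,-\gamma}$ or the ratio $\psi_{\tau_-}/\ln$ instead; I would settle this step first, since it fixes which log power appears in (\ref{es-cri-1}).

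Once the profile $\chi$ satisfies $\cL_\mu\chi\asymp W\chi^p$ at infinity with a positive constant, the sublinearity $p<1$ does the rest.
\begin{itemize}
\item $A\chi+t\,w_1$ is a supersolution for $A$ large, where $w_1$ is the Green potential of $1_{B_{n_0}}$ from (\ref{eq 3.1-0}). Indeed $A\gtrsim A^p$, and the term $t\,w_1$ repairs the finite region where the expansion fails.
\item $\varepsilon\chi$, truncated to vanish near the origin or minus a small multiple of $\psi_{\tau_+}$ as in the proof of Proposition \ref{pr 3.1}, is a subsolution for $\varepsilon$ small.
\end{itemize}
Starting from the subsolution, I iterate $u_{k+1}=$ the minimal solution of $\cL_\mu u=Wu_k^p$. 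Proposition \ref{pr 3.1} applies because $W u_k^p\lesssim|x|^{\tau_--2}(\ln|x|)^{p\gamma}$ is summable against $(1+|x|)^{\tau_-}$. The resulting sequence increases and is trapped below the supersolution, and passing to the pointwise limit gives $u_1$ with the two-sided bound (\ref{es-cri-1}).

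Case $(ii)$: here $p=p^\#_{\mu,\theta}>1$, so $p\tau_++\theta=\tau_+-2$. I take $\sigma=\tau_+$ and $\gamma=-\frac1{p-1}<0$, which again gives $\gamma-1=p\gamma$. The sign of $\gamma\beta_0'(\tau_+)$ should now make $\cL_\mu\chi$ positive. Since $p>1$, the supersolution is $A\chi$ with $A$ small, again corrected near the origin by $t\,w_1$. From this supersolution I iterate downward: $u_{k+1}$ is the minimal solution of $\cL_\mu u=Wu_k^p$. This produces a nonincreasing sequence, and the summability needed for Proposition \ref{pr 3.1} follows from $\tau_+-2+\tau_-=-d$ together with the negative log power. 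Some care is needed when $\tau_+>0$, that is when $\mu<0$: then $\chi$ grows, and the summability of $|x|^{-d}(\ln|x|)^{p\gamma}$ uses $p\gamma<-1$, which holds since $p\gamma=\gamma-1<-1$.

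The main obstacle is the lower bound in case $(ii)$. The decreasing iteration could collapse to $0$, or to something decaying only like $|x|^{\tau_-}$, which is what Corollary \ref{existence poisson} $(i)$ alone would give. To rule this out, I would first try to prove $u_k\ge\varepsilon\chi$ uniformly in $k$ by an exterior comparison. Choose $\varepsilon$ so that $\varepsilon\chi$ minus a small multiple of $\psi_{\tau_-}$ is a subsolution in $\Z^d\setminus B_{R}$, with boundary data on $\delta B_R$ controlled by a uniform positive lower bound for $u_k$ on the finite set $B_R$. That lower bound itself comes from $u_k\ge u_\infty$, obtained by a compactness and monotonicity argument. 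If this bootstrap is circular, the fallback is to build a positive subsolution $\underline u\le A\chi$ directly and run the iteration between $\underline u$ and $A\chi$. The order-preserving property of $u\mapsto Wu^p$ keeps every iterate between the two barriers, and this yields (\ref{es-cri-1}) for $u_2$.
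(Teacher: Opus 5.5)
Your case $(i)$ is essentially the paper's proof. There the paper sets $U_1=\psi_{-,\sigma_1}+t_1w_1$ with $\psi_{-,\sigma_1}=(e+|x|^2)^{\tau_-/2}(\ln(e+|x|^2))^{\sigma_1/2}$ and $\sigma_1=\frac{2}{1-p}$; this is your $\chi_{\tau_-,\gamma}$ with $\gamma=\frac1{1-p}$. Lemma \ref{lm 5.1} gives $\cL_\mu U_1\asymp WU_1^p>0$ on $\Z^d$, and $p<1$ then yields the ordered pair $l_1U_1\le l_2U_1$. However, the expansion you wrote has the wrong sign, and your proposed fallback would not work. For the Euclidean Laplacian, with $r=|x|$ and $L=\ln r$ (the lattice corrections are $O(|x|^{\sigma-4}L^{\gamma})$),
\[
\Delta_{\R^d}\big(r^{\sigma}L^{\gamma}\big)=r^{\sigma-2}\big[\sigma(\sigma+d-2)L^{\gamma}+\gamma(2\sigma+d-2)L^{\gamma-1}+\gamma(\gamma-1)L^{\gamma-2}\big],
\]
and hence
\[
\cL_\mu\chi_{\sigma,\gamma}(x)=2\gamma\,\beta_0'(\sigma)\,|x|^{\sigma-2}\big(\ln(e+|x|^2)\big)^{\gamma-1}\big(1+o(1)\big).
\]
This is exactly the paper's coefficient $\beta_{\pm,1}(\sigma)=-\sigma(d+2\tau_\pm-2)$ with $\sigma=2\gamma$. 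Since $\beta_0'(\tau_-)=(d-2)\sqrt{1+\mu}>0$ and $\gamma>0$, the sign in $(i)$ is favourable. Switching to $\chi_{\tau_-,-\gamma}$ is never an option, because the balance $\gamma-1=p\gamma$ determines $\gamma$ uniquely. A minor slip: $\tau_+>0$ exactly when $\mu>0$, not $\mu<0$; your summability check in $(ii)$ is unaffected.

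The genuine gap is case $(ii)$ for the equation \eqref{eq 1.1-ext-ex}. You correctly see that for $p>1$ only small multiples of $U=\chi_{\tau_+,\gamma}+t\,w_1$ (chosen with $\cL_\mu U>0$) are supersolutions. But your downward iteration from such a multiple provably collapses. Let $r_{\max}=\sup_{\Z^d}WU^p/\cL_\mu U$, let $G$ be the minimal-solution map, and suppose $l^{p-1}r_{\max}<1$. Using that $G$ is linear and monotone and that $G[\cL_\mu U]\le U$, you get
\[
u_1=G\big[W(lU)^p\big]\le l^{p}r_{\max}\,G[\cL_\mu U]\le \big(l^{p-1}r_{\max}\big)\,lU .
\]
Inductively $u_k\to0$, so no uniform bound $u_k\ge\varepsilon\chi$ can be bootstrapped through $u_\infty$.

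Your fallback, a positive subsolution lying below $A\chi$, is the whole problem, and the natural candidates fail:
\begin{itemize}
\item $\varepsilon\chi$ is a subsolution at infinity only if $\varepsilon^{p-1}\ge\limsup_{|x|\to\infty}\cL_\mu\chi/(W\chi^p)$, whereas $A\chi$ is a supersolution only if $A^{p-1}\le\liminf_{|x|\to\infty}$ of the same ratio; this forces $\varepsilon\ge A$.
\item $|x|^{\tau_+}(\ln|x|)^{\gamma'}$ with $\gamma'<\gamma$ has $\cL_\mu$ dominating $W(\cdot)^p$ at infinity, so it is not a subsolution.
\item $|x|^{\tau_+}(\ln|x|)^{\gamma'}$ with $\gamma'>\gamma$ eventually lies above $A\chi$.
\end{itemize}
A new ingredient is needed, for example a non-monotone existence argument, or a two-term barrier around the exact asymptotic profile (which requires more than $W\asymp|x|^\theta$).

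Be aware that the paper does not supply this either. For $(ii)$ it repeats the sublinear scaling and asserts that $\tilde l_1\tilde U_1$ with $\tilde l_1<1$ is a subsolution and $\tilde l_2\tilde U_1$ with $\tilde l_2>1$ is a supersolution. Set $\rho=\cL_\mu\tilde U_1/(W\tilde U_1^p)$. For $p>1$ those two claims would require $\tilde l_1^{p-1}\ge\sup\rho\ge\inf\rho\ge\tilde l_2^{p-1}$, which is impossible since $\tilde l_1<\tilde l_2$.

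What both your argument and the paper's actually establish is weaker: a small multiple of $\tilde U_1$ solves the inequality \eqref{eq 1.1-ext} with profile $|x|^{\tau_+}(\ln|x|)^{-1/(p-1)}$. (The $\tau_-$ in \eqref{es-cri-1} is a misprint for this case.) That weaker statement is all Theorem \ref{teo cri ex 1} uses.
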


	To prove Proposition \ref{pr 1-ex}, we need following auxiliary lemma. 
	
	\begin{lemma}\label{lm 5.1}
		For $\sigma\in\R$, denote $\varphi_{\pm, \sigma}\in C^2(\R_+)$
		$$\varphi_{\pm, \sigma} (t)=(e+t)^{\frac12 \tau_\pm} \big(\ln (e+t)\big)^{\frac12 \sigma}\quad{\rm for}\ \ \forall\, t>1,$$
		where $\R_+=[0,+\infty)$. 
		
		When $\mu=\mu_0$, for $\sigma\in(0, 2)$,   there exists $r_0>1$ and $c>1$ such that for $|x|>r_0$, 
		$$\frac1c  |x| ^{ \tau_\pm -2}\big(\ln (e+|x|^2)\big)^{\frac12\sigma-2}\leq \cL_\mu     \psi_{\pm, \sigma}(x)  \leq c  |x| ^{ \tau_\pm -2}\big(\ln (e+|x|^2)\big)^{\frac12\sigma-2}. $$

		For $\mu>\mu_0$ and  $\mp  \sigma  >0$,   and  
		there exists $r_0>1$ and $c>1$ such that for $|x|>r_0$, 
		$$\frac1c  |x| ^{ \tau_\pm -2}\big(\ln (e+|x|^2)\big)^{\frac12\sigma-1}\leq \cL_\mu    \psi_{\pm, \sigma}(x)  \leq c  |x| ^{ \tau_\pm -2}\big(\ln (e+|x|^2)\big)^{\frac12\sigma-1}. $$
	\end{lemma}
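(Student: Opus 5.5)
The plan is to compute $\cL_\mu\psi_{\pm,\sigma}$ directly, where $\psi_{\pm,\sigma}(x)=\varphi_{\pm,\sigma}(|x|^2)$. I would follow the same lattice Taylor expansion used for \eqref{est-1}, but now keep track of the logarithmic factors. Write $F(t)=(e+t)^{a}\big(\ln(e+t)\big)^{b}$ with $a=\frac12\tau_\pm$ and $b=\frac12\sigma$. For $y=x\pm e_k$ we have $|y|^2-|x|^2=\pm 2x_k+1=:h$, and I expand
$$F(t+h)-F(t)=F'(t)h+\tfrac12F''(t)h^2+\tfrac16F'''(t)h^3+O\big(|F^{(4)}|h^4\big),\qquad t=|x|^2 .$$
Summing over the $2d$ neighbours, the odd parts of $h$ cancel. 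We have $\sum h=2d$ and $\sum h^2=8|x|^2+2d$. The cubic and quartic contributions are $O(|x|^{\tau_\pm-4}(\ln|x|)^{b})$, because each $t$-derivative of $F$ costs one power of $t$, up to log factors that are bounded by $(\ln(e+t))^b$. Hence
$$\Delta\psi_{\pm,\sigma}(x)=2d\,F'(t)+4t\,F''(t)+O\big(|x|^{\tau_\pm-4}(\ln|x|)^{b}\big),$$
which is exactly the Euclidean radial Laplacian of $r\mapsto F(r^2)$, up to this error.

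Next I evaluate $2dF'+4tF''$ explicitly. Put $\ell=\ln(e+t)$ and use $t/(e+t)=1+O(t^{-1})$. A direct computation gives
$$2dF'+4tF''=(e+t)^{a-1}\Big[2a(2a+d-2)\ell^{b}+2b(4a+d-2)\ell^{b-1}+4b(b-1)\ell^{b-2}\Big]+O\big(|x|^{\tau_\pm-4}\ell^{b}\big).$$
By \eqref{ome-1} and $(\cH_1)$ we have $\mu H_0(x)=\mu\frac{(d-2)^2}{4}|x|^{-2}+O(|x|^{-2-\varepsilon_0})$. The first bracket term therefore combines with $\mu H_0\psi_{\pm,\sigma}$ into $\beta_0(\tau_\pm)|x|^{\tau_\pm-2}\ell^b=0$, with a remainder $O(|x|^{\tau_\pm-2-\varepsilon_0}\ell^{b})$. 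Using $2\tau_\pm+d-2=\pm(d-2)\sqrt{1+\mu}$, I obtain
$$\cL_\mu\psi_{\pm,\sigma}(x)=|x|^{\tau_\pm-2}\Big[\mp\sigma(d-2)\sqrt{1+\mu}\,\ell^{\frac{\sigma}{2}-1}-\sigma\big(\tfrac{\sigma}{2}-1\big)\ell^{\frac{\sigma}{2}-2}\Big]+O\big(|x|^{\tau_\pm-2-\varepsilon_0}\ell^{\frac\sigma2}\big).$$

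The two cases now follow. If $\mu>\mu_0$ and $\mp\sigma>0$, the coefficient $\mp\sigma(d-2)\sqrt{1+\mu}$ is strictly positive. The $\ell^{\sigma/2-2}$ term and the $|x|^{-\varepsilon_0}$ error are of lower order, since any power of $|x|$ beats any power of $\ell$. This gives the two-sided bound with the exponent $\frac12\sigma-1$. If $\mu=\mu_0$, the first coefficient vanishes and $\tau_+=\tau_-$. The leading term is then $-\sigma(\frac\sigma2-1)\ell^{\sigma/2-2}$, which is positive exactly when $\sigma\in(0,2)$, and the same dominance argument gives the bound with the exponent $\frac12\sigma-2$. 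Finally, choosing $r_0$ large absorbs all errors into a factor in $[\frac1c,c]$.

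The main obstacle is controlling the lattice Taylor remainder uniformly while keeping the log factors. One has to check that the third- and fourth-order terms in $h$ really only contribute $O(|x|^{\tau_\pm-4}\ell^{b})$, including the symmetric cancellation of the odd parts of $h^3$. One also has to check that the $(\cH_1)$ error $|x|^{-\varepsilon_0}$, rather than merely $o(1)$ as under $(\cH_0)$, is what makes the logarithmic leading terms dominate; this matters especially in the critical case, where the main term carries $\ell^{b-2}$.
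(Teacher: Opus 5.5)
Your proposal is correct and follows essentially the paper's route: a second-order lattice Taylor expansion of $\varphi_{\pm,\sigma}(|x|^2)$, cancellation of the $\ell^{\sigma/2}$ term via $\beta_0(\tau_\pm)=0$, and identification of the leading coefficient. That coefficient is $-\sigma(d+2\tau_\pm-2)$ when $\mu>\mu_0$, and $-\sigma(\sigma-2)$ when $\mu=\mu_0$; your $-\sigma(\frac\sigma2-1)$ is off by a harmless factor $2$. Your explicit remainder $O(|x|^{\tau_\pm-2-\varepsilon_0}\ell^{\sigma/2})$, obtained from $(\cH_1)$, is more careful than the paper's $(1+o(1))$ bookkeeping. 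It is also genuinely needed, because a mere $o(1)$ error in $H_0/\overline{H}_0$, as allowed under $(\cH_0)$, could swamp the logarithmic leading terms.
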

	{\bf Proof. } Direct computation shows that 
	$$
	\varphi_{\pm, \sigma} '(t)=\frac12\tau_\pm (e+t)^{\frac12\tau_\pm-1} \big(\ln (e+t)\big)^{\frac12\sigma}+\frac12\sigma (e+t)^{\frac12\tau_\pm-1} \big(\ln (e+t)\big)^{\frac12\sigma-1},$$
	\begin{align*}  
		\varphi_{\pm, \sigma}''(t)&= (e+t)^{\frac12\tau_\pm-2} \big(\ln (e+t)\big)^{\frac12\sigma}  \Big[ \frac14\tau_\pm(\tau_\pm-2)+\frac12\sigma(\tau_\pm-1)  \big(\ln (e+t)\big)^{-1}
		\\[1mm]&\quad\ \ +\frac14\sigma(\sigma-2)  \big(\ln (e+t)\big)^{-2}\Big]. 
	\end{align*}
	Now we set $\psi_{\pm, \sigma}(x)=\varphi_{\pm,\sigma}(|x|^2)$ for $x\in\Z^d$. 
	Then for $x\in\Z^d$, $|x|>n$ we see that 
	\begin{align*}
		\Delta \psi_{\pm, \sigma}(x)   &= \sum_{y\sim x}\big(\psi_{\pm, \sigma}(y)-\psi_{\pm, \sigma}(x)\big)  
		\\[1mm]&= \sum_{y\sim x} \bigg\{\Big[\frac{\tau_\pm}2 +\frac{\sigma}2   \big(\ln (e+|x|^2)\big)^{-1} \Big](e+|x|^2)^{\frac{\tau_\pm}2-1} \big(\ln (e+|x|^2)\big)^{\frac{\sigma}2 } (|y|^2-|x|^2)
		\\&\qquad\quad\ +\frac12 
		\Big( \frac14\tau_\pm(\tau_\pm-2)+\frac12\sigma(\tau_\pm-1)  \big(\ln (e+|x|^2)\big)^{-1}
		+\frac14\sigma(\sigma-2)  \big(\ln (e+|x|^2)\big)^{-2}\Big)
		\\&\qquad\qquad\quad\cdot(e+|x|^2)^{\frac{\tau_\pm}2-2} \big(\ln (e+|x|^2)\big)^{\frac12\sigma}  (|y|^2-|x|^2)^2 \bigg\}\big(1+o(1)\big) \allowdisplaybreaks
		\\[1mm]&= \bigg\{  d \Big[ \tau_\pm   + \sigma   \big(\ln (e+|x|^2)\big)^{-1} \Big](e+|x|^2)^{\frac{\tau_\pm}2-1} \big(\ln (e+|x|^2)\big)^{\frac{\sigma}2 }   
		\\&\qquad +  
		\Big(  \tau_\pm(\tau_\pm-2)+2 \sigma(\tau_\pm-1)  \big(\ln (e+|x|^2)\big)^{-1}
		+ \sigma(\sigma-2)  \big(\ln (e+|x|^2)\big)^{-2}\Big)
		\\&\qquad\qquad\quad\cdot(e+|x|^2)^{\frac{\tau_\pm}2-2} \big(\ln (e+|x|^2)\big)^{\frac12\sigma}   |x|^2  \bigg\}\big(1+o(1)\big),
	\end{align*}
	thus,  for $|x|$ large enough, we have that 
	\begin{align*} 
		\cL_\mu  \psi_{\pm, \sigma}(x)    &=\bigg\{\beta_0(\tau_\pm) |x| ^{ \tau_\pm -2}\big(\ln (e+|x|^2)\big)^{\frac12\sigma}+\beta_{\pm,1}(\sigma) |x| ^{ \tau_\pm -2}\big(\ln (e+|x|^2)\big)^{\frac12\sigma-1} 
		\\&\qquad+\beta_{\pm,2}(\sigma) |x| ^{ \tau_\pm -2}\big(\ln (e+|x|^2)\big)^{\frac12\sigma-2}\bigg\} \big(1+o(1)\big),  
	\end{align*}
	where  $\beta_0(\tau_\pm)=0$, 
	\begin{align}\label{ex-con-1}
		\beta_{\pm,1}(\sigma)=  - \sigma \big(d+2\tau_\pm -2 \big)\quad{\rm and}\quad  \beta_{\pm,2}(\sigma)=-\sigma(\sigma-2) . 
	\end{align}
	When $\mu=-1$, then $\beta_{\pm,1}(\sigma)=0$ for any $\sigma\in\R$, and $\beta_{\pm,2}(\sigma)>2$ for $\sigma\in(0,2)$.  Thus, for $\sigma\in( 0, 2)$,   there exists $r_0>1$ such that for $|x|>r_0$, 
	$$\frac12\beta_{\pm,2}(\sigma)  |x| ^{ \tau_\pm -2}\big(\ln (e+|x|^2)\big)^{\frac12\sigma-2}\leq \cL_\mu    \psi_{\pm, \sigma}(x)  \leq 2\beta_{\pm,2}(\sigma)  |x| ^{ \tau_\pm -2}\big(\ln (e+|x|^2)\big)^{\frac12\sigma-2},  $$
	where $\cL_\mu = -\Delta  + \mu H_0$.

	For $\mu>-1$ and $\mp\sigma  >0$ , then $\beta_{\pm,1}(\sigma)>0$  and  
	there exists $r_0>1$ such that for $|x|>r_0$, 
	$$\frac12\beta_{\pm,1}(\sigma)  |x| ^{ \tau_\pm -2}\big(\ln (e+|x|^2)\big)^{\frac12\sigma-1}\leq \cL_\mu    \psi_{\pm, \sigma}(x)  \leq 2\beta_{\pm,1}(\sigma)  |x| ^{ \tau_\pm -2}\big(\ln (e+|x|^2)\big)^{\frac12\sigma-1}.  $$
	The proof ends. \hfill$\Box$\medskip

	\noindent {\bf Proof of Proposition \ref{pr 1-ex}. } Recall that 
	$$
		w_1(x)=\sum_{y\in\Z^d}\Phi_{d,\mu}(x,y) 1_{B_{n_0}(0)}(y),\quad\forall\, x\in\Z^d
		$$
	for some $n_0>1$ and there exists $c>1$ such that 
	$$
		\frac1c(e+ |x|)^{\tau_-}\leq  w_1(x) \leq c(e+ |x|)^{\tau_-},\quad\forall\, x\in\Z^d.   
		$$

	Let 
	$$U_1= \psi_{-,\sigma_1}+t_1 w_1\quad {\rm in}\ \, \Z^d,$$
	where
	$\sigma_1=\frac{2}{1-q^*_{\mu,\theta}}>0$ by the fact that $q^*_{\mu,\theta}\in(0,1)$, 
	$t_1>0$ is  large  such that 
	$$ \Big|\cL_\mu  \psi_{-,\sigma_1}(x)\Big|\leq  \frac12 t_1\quad \ {\rm for}\ \,  |x|\leq r_0. $$
	Note that  $\beta_{-,1}(\sigma_1)>0$, then by Lemma \ref{lm 5.1}  for some $c>1$
	$$
	\frac1{c} (1+ |x| )^{ \tau_- -2}\big(\ln (e+|x|^2)\big)^{\frac12\sigma_1-1} \leq \cL_\mu  U_1(x)\leq   c(1+ |x| )^{ \tau_- -2}\big(\ln (e+|x|^2)\big)^{\frac12\sigma_1-1},\ \forall\, x\in\Z^d.  $$
	
	By    (\ref{pt r2}) and (\ref{pt r1-l})  there exists $c_1>1$ such that for $x\in\Z^d$
	$$\frac1{c_1}(1+ |x| )^{\theta +\tau_- p^*_{\mu,\theta}}\big(\ln (e+|x|^2)\big)^{\frac12\sigma_1 p^*_{\mu,\theta}}\leq WU_1^{p^*_{\mu,\theta}}(x)\leq c_1(1+ |x| )^{\theta +\tau_- p^*_{\mu,\theta}}\big(\ln (e+|x|^2)\big)^{\frac12\sigma_1q^*_{\mu,\theta}},$$
	where $\theta +\tau_- p^*_{\mu,\theta}=\tau_--2$ and $\frac12\sigma_1 p^*_{\mu,\theta}=\frac12\sigma_1-1$. 
	Then there exist $l_1\in(0,1)$ and $l_2>1$ such that 
	$$\cL_\mu  (l_1U_1) \leq W(l_1U_1)^{p^*_{\mu,\theta}} \quad {\rm in}\ \, \Z^d$$
	and 
	$$\cL_\mu  (l_2U_1) \geq W(l_2U_1)^{p^*_{\mu,\theta}}\quad {\rm in}\ \, \Z^d.$$
	As a consequence, we can obtain a solution $u_1$ such that 
	$$l_1U_1 \leq u_1\leq l_2U_1\quad {\rm in}\ \, \Z^d. $$
	
	Let 
	$$\tilde U_1= \psi_{+,\sigma_2}+t_2 w_1 \quad {\rm in}\ \, \Z^d,  $$
	where  $\sigma_2=\frac{2}{1-q^\#_{\mu,\theta}}<0$ by the fact that $q^\#_{\mu,\theta}>1$, 
	$t_2>0$ is  large  such that 
	$$ \Big|\cL_\mu  \psi_{+,\sigma_2}(x)\Big|\leq  \frac12 t_2\quad \ {\rm for}\ \,  |x|\leq r_0. $$
	Note that  $\beta_{+,1}(\sigma_2)>0$, then by Lemma \ref{lm 5.1}  for some $c'>1$ 
	$$
	\frac1{c'} (1+ |x| )^{ \tau_+ -2}\big(\ln (e+|x|^2)\big)^{\frac12\sigma_2-1} \leq \cL_\mu  U_1(x)\leq   c'(1+ |x| )^{ \tau_+ -2}\big(\ln (e+|x|^2)\big)^{\frac12\sigma_2-1},\ \forall\, x\in\Z^d.   $$
	By    (\ref{pt r2}) and (\ref{pt r1-l})  there exists $c_1>1$ such that for $ x\in\Z^d$
	$$\frac1{c_1}(1+ |x| )^{\theta +\tau_+ q^\#_{\mu,\theta}}\big(\ln (e+|x|^2)\big)^{\frac12\sigma_2 q^\#_{\mu,\theta}}\leq W\tilde U_1^{q^\#_{\mu,\theta}}(x)\leq c_1(1+ |x| )^{\theta +\tau_+ q^\#_{\mu,\theta}}\big(\ln (e+|x|^2)\big)^{\frac12\sigma_2 q^\#_{\mu,\theta}}, $$
	where $\theta +\tau_+ q^\#_{\mu,\theta}=\tau_+-2$ and $\frac12\sigma_2 q^\#_{\mu,\theta}=\frac12\sigma_2-1$.
	Then there exist $\tilde l_1\in(0,1)$ and $\tilde l_2>1$ such that 
	$$\cL_\mu  (\tilde l_1\tilde U_1) \leq W(\tilde l_1\tilde U_1)^{q^\#_{\mu,\theta}} \quad {\rm in}\ \, \Z^d$$
	and 
	$$\cL_\mu  (\tilde l_2\tilde U_1) \geq W(\tilde l_2\tilde U_1)^{q^\#_{\mu,\theta}}\quad {\rm in}\ \, \Z^d.$$
	As a consequence, we can obtain a solution $u_2$ such that 
	$$\tilde l_1\tilde U_1 \leq u_2 \leq \tilde l_2\tilde U_1\quad {\rm in}\ \, \Z^d. $$
	The proof ends. \hfill$\Box$\medskip



\begin{proposition}\label{pr 2-ex}
	Assume that  $d\geq 3$,  $ W\in C(\Z^d)$ verifies   $(\cW_\theta )$, 
	$$\mu>\mu_0,\quad  0<H_0(x)\leq \overline{H}_0\ \ {\rm in}\ \, \Z^d, \quad  \text{ $H_0 \in C(\Z^d)$ verifies  $(\cH_0)$}. $$ 
	and
	$$   \alpha_p:=-\frac{2+\theta}{q-1}\in(\tau_-,\tau_+). $$
	$(i)$ If assume more that  (\ref{pt r1-l}) holds and   $q<1$, then problem (\ref{eq 1.1-ext-ex})  has a  positive classical solution $u_1$ such that 
	\begin{equation}\label{es-cri-3}
		0<\liminf_{|x|\to+\infty}u_1(x) |x| ^{-\alpha_p} \leq \limsup_{|x|\to+\infty}u_1(x)|x| ^{-\alpha_p} <+\infty. 
	\end{equation}
	$(ii)$  If assume more that $q>1$, then problem (\ref{eq 1.1-ext})  has a  positive classical solution $u_1$ verifying  
	$$ \limsup_{|x|\to+\infty}u_1(x)|x| ^{-\alpha_p} <+\infty. $$
\end{proposition}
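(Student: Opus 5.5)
Throughout, $q$ plays the role of $p$ and $\alpha_p=-\frac{2+\theta}{q-1}$, so that $\theta+q\alpha_p=\alpha_p-2$. The idea is to build explicit super- and sub-solutions from the power profile $\psi_{\alpha_p}$. Since $\alpha_p\in(\tau_-,\tau_+)$, we have $\beta_0(\alpha_p)>0$, and (\ref{est-1}) gives, for $|x|\ge n_0$,
$$\tfrac12\beta_0(\alpha_p)(1+|x|^2)^{\frac{\alpha_p}2-1}\le \cL_\mu\psi_{\alpha_p}(x)\le 2\beta_0(\alpha_p)(1+|x|^2)^{\frac{\alpha_p}2-1}.$$
By $(\cW_\theta)$ and (\ref{pt r2+}), for large $|x|$ the quantity $W\psi_{\alpha_p}^q$ is comparable to $|x|^{\theta+q\alpha_p}=|x|^{\alpha_p-2}$. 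Hence both sides of the equation scale like $|x|^{\alpha_p-2}$.

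To handle the finite region I will use $U=\psi_{\alpha_p}+t\,w_1$, with $w_1$ as in (\ref{eq 3.1-0}). Since $\cL_\mu w_1=1_{B_{n_0}}$ and $w_1\asymp(1+|x|)^{\tau_-}=o(|x|^{\alpha_p})$, choosing $t$ large gives $\cL_\mu U\asymp(1+|x|)^{\alpha_p-2}$ on all of $\Z^d$ and $U\asymp(1+|x|)^{\alpha_p}$. Inside the ball, (\ref{pt r1-l}) (read as boundedness of $W$ on finite sets) makes $WU^q$ bounded there.

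\emph{Case $(ii)$, $q>1$.} Take $\bar u=l\,U$ with $l$ small. Then $\cL_\mu(lU)\ge c\,l(1+|x|)^{\alpha_p-2}$, while $W(lU)^q\le C l^q(1+|x|)^{\alpha_p-2}$. Since $q>1$, $l^q\ll l$, so $\cL_\mu \bar u\ge W\bar u^q$ in $\Z^d$. This $\bar u$ is already a positive solution of the inequality (\ref{eq 1.1-ext}), and it has the required upper bound $\limsup u|x|^{-\alpha_p}<\infty$.

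\emph{Case $(i)$, $q<1$.} Here the scaling works the other way. For $l_2$ large, $l_2U$ is a supersolution, because $l_2\gg l_2^q$. For $l_1$ small, $l_1U$ is a subsolution of $\cL_\mu u\le Wu^q$, which additionally needs $W\ge c(1+|x|)^\theta$ globally. I will first run the sandwich using $\underline W:=\min\{W,\,c(1+|x|)^\theta\}$ near infinity, and inside the finite ball I will use $W\ge0$ together with a lower bound on $U$; if $W$ vanishes somewhere I will shrink $t$ accordingly. To produce a solution between $l_1U$ and $l_2U$, I will solve the Dirichlet problems
$$\cL_\mu u=Wu^q \text{ in } B_n,\qquad u=l_1U \text{ on } \partial B_n$$
by monotone iteration. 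Since $s\mapsto s^q$ is increasing, I iterate $u_{k+1}=\cL_\mu^{-1}\big(Wu_k^q\big)$ starting from $l_2U$, using Lemma \ref{com lm} at each step for ordering. This gives a solution $u^{(n)}$ with $l_1U\le u^{(n)}\le l_2U$. Each point has finite degree, so a diagonal pointwise limit as $n\to\infty$ yields a solution of (\ref{eq 1.1-ext-ex}) satisfying (\ref{es-cri-3}).

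The step I expect to be the main obstacle is the subsolution bound near the origin in $(i)$, where $W$ may be small or zero. If $W$ vanishes somewhere, I will instead take the sub-solution $l_1\max\{\psi_{\alpha_p}-M\psi_{\tau_-},0\}$, which vanishes on a finite set. Alternatively, I will use Corollary \ref{existence poisson} to pass a positive lower bound from $f=Wu^q$ at infinity to the constructed solution.
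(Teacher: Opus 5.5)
Your proposal is correct and follows the paper's proof essentially step for step: the same comparison function $U=\psi_{\alpha_p}+t\,w_1$ with $\cL_\mu U\asymp(1+|x|)^{\alpha_p-2}$, a small multiple $lU$ as supersolution when $q>1$, and the sandwich $l_1U\le u\le l_2U$ when $q<1$. You go further than the paper in three ways: you supply the monotone iteration on balls and the diagonal limit that the paper only asserts; you correctly invoke the limsup bound (\ref{pt r2+}) for the upper estimate on $W u^q$, which $(\cW_\theta)$ alone does not give; and your contingency for zeros of $W$ is unnecessary, because the paper takes $W$ strictly positive, so $W\ge c(1+|x|)^{\theta}$ holds on all of $\Z^d$ and $l_1U$ is a subsolution everywhere.
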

{\bf Proof. } 
Let
$$ v_p(x)=(1+|x|)^{\alpha_p}\quad {\rm for}\ \, x\in \Z^d.  $$
If $\alpha_p\in(\tau_-,\tau_+)$, then it infers (\ref{est-1}) that 
there exists $r_0>0$   such that 
$$\frac12\beta_0(\alpha_p)  |x| ^{ \alpha_p -2} \leq \cL_\mu   v_p(x)\leq 2\beta_0(\alpha_p)   |x| ^{ \alpha_p -2}\quad {\rm for}\ |x|>r_0. $$
Let 
$$U_p(x)=  v_p(x)+t_p w_1(x)\quad {\rm in}\ \, \Z^d,$$
where 
$t_p>0$ is  large  such that 
$$ \Big|\cL_\mu    v_p(x) \Big|\leq  \frac12 t_p\quad \ {\rm for}\ \,  |x|\leq r_0. $$
Then   for some $c>1$
$$
\frac1{c} (1+ |x| )^{\alpha_p -2}  \leq \cL_\mu  U_p(x)\leq   c  (1+ |x| )^{\alpha_p -2} ,\ \forall\, x\in\Z^d.  $$
By    (\ref{pt r2}) and (\ref{pt r1-l})  there exists $c_1>1$ such that 
$$\frac1{c_1}(1+ |x| )^{\theta +p\alpha_p   } \leq WU_1^p(x)\leq c_1(1+ |x| )^{\theta +p\alpha_p   },\ \forall\, x\in\Z^d,$$
where $\theta +p\alpha_p =\alpha_p-2$. With  only assumption (\ref{pt r2}),  the second inequality holds true.  

If $p\in(0,1)$ and $\alpha_p\in(\tau_-,\tau_+)$, then there exist $l_1\in(0,1)$ and $l_2>1$ such that 
$$\cL_\mu  (l_1U_p) \leq W(l_1U_p)^{p } \quad {\rm in}\ \, \Z^d$$
and 
$$\cL_\mu  (l_2U_p) \geq W(l_2U_p)^{p }\quad {\rm in}\ \, \Z^d.$$
As a consequence, we can obtain a solution $\tilde u_p$ of (\ref{eq 1.1-ext-ex})  such that 
$$l_1U_p \leq \tilde u_p\leq l_2U_p\quad {\rm in}\ \, \Z^d. $$

If $p>1$ and $\alpha_p\in(\tau_-,\tau_+)$, then there exist $l_1\in(0,1)$   such that 
$$\cL_\mu  (l_1U_p) \geq W(l_1U_p)^{p } \quad {\rm in}\ \, \Z^d. $$
As a consequence, $l_1U_p$ verifies  (\ref{eq 1.1-ext}).   \hfill$\Box$ \medskip

Now we are in a position to show the existence. \medskip

\begin{proposition}\label{pr 3-ex}
	Assume that  $d\geq 3$, 
	and
	either 
	$$\mu= \mu_0,\ \text{ $H_0<\overline{H}_0$ in $Z^d$  and  $(\cH_1)$ holds}$$ 
	or
	$$\mu>\mu_0,\quad  0<H_0(x)\leq \overline{H}_0\ \ {\rm in}\ \, \Z^d, \quad  \text{ $H_0 \in C(\Z^d)$ verifies  $(\cH_0)$}. $$   
	Let     $ W\in C(\Z^d)$ verify   $(\cW_\theta )$  and
	$$    2+\theta +p\tau_-<\tau_-. $$
	
	$(i)$ Then problem (\ref{eq 1.1-ext})  has a  positive classical solution $u_1$ verifying (\ref{es-cri-3}).  
	
	$(ii)$ If assume more that  (\ref{pt r1-l}) holds and   $q<1$, then problem (\ref{eq 1.1-ext-ex})  has a  positive classical solution $u_1$ such that 
	\begin{equation}\label{es-cri-4}
		0<\liminf_{|x|\to+\infty}u_1(x) |x| ^{-\tau_-} \leq \limsup_{|x|\to+\infty}u_1(x)|x| ^{-\tau_-} <+\infty. 
	\end{equation}
	
\end{proposition}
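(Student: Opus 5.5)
The plan is to build an explicit positive supersolution of the form $U=\psi_{\tau_-}-\psi_{\tau_1}+t_1w_1$ (or simply a multiple of $w_1$ corrected at infinity), as in the proof of Corollary \ref{existence poisson}$(ii)$. Here $w_1=\sum_y\Phi_{d,\mu}(\cdot,y)1_{B_{n_0}(0)}(y)$ satisfies $w_1\asymp(1+|x|)^{\tau_-}$ by Theorem \ref{pr fun-fw} and (\ref{eq 3.1-00}). The condition $2+\theta+p\tau_-<\tau_-$ means that if $u\asymp |x|^{\tau_-}$, then $Wu^p\lesssim |x|^{\theta+p\tau_-}=|x|^{\tau_1-2}$ with $\tau_1:=2+\theta+p\tau_-<\tau_-$. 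So the nonlinearity decays strictly faster than the critical rate $|x|^{\tau_--2}$, and the linear operator can absorb it.

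\textbf{Step 1 (a supersolution of the linear problem with right-hand side $(1+|x|)^{\tau_1-2}$).} Fix $\tau_2\in(\max\{\tau_1,\tau_--\frac12\varepsilon_0\},\tau_-)$ in case $\mu=\mu_0$, or simply $\tau_2\in(\tau_1,\tau_-)$ in case $\mu>\mu_0$. Then $\beta_0(\tau_2)<0$, and by (\ref{est-1}) (respectively (\ref{est-1-im}) when $\mu=\mu_0$, which is needed because $\beta_0(\tau_-)=0$ has a double root there),
$$\cL_\mu(\psi_{\tau_-}-\psi_{\tau_2})\ge \tfrac12|\beta_0(\tau_2)|(1+|x|^2)^{\frac{\tau_2}{2}-1}\quad{\rm for}\ |x|\ge n_0 .$$
For $\mu>\mu_0$ under $(\cH_0)$ the remainder is only $o(|x|^{\tau_--2})$, and $\psi_{\tau_-}$ contributes $o(|x|^{\tau_--2})$, which is not dominated by $|x|^{\tau_2-2}$. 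I would therefore instead use $U=-\psi_{\tau_2}+M\psi_{\tau_-}+t w_1$, with $\tau_2$ close to $\tau_-$. The point to check is that $\psi_{\tau_-}$ is replaced by $w_1$ itself, which satisfies $\cL_\mu w_1=1_{B_{n_0}}\ge0$ exactly. So I take
$$U=t\big(w_1-\epsilon\psi_{\tau_2}\big)+t_1 w_1 ,$$
with $\epsilon$ small so that $U>0$ (possible since $\tau_2<\tau_-$ and $w_1\asymp(1+|x|)^{\tau_-}$). Then outside $B_{n_0}$ one has $\cL_\mu U\ge t\epsilon|\beta_0(\tau_2)|/2\,(1+|x|)^{\tau_2-2}$, and inside $B_{n_0}$ the term $t_1 1_{B_{n_0}}$ dominates $|\cL_\mu\psi_{\tau_2}|$. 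This avoids the $(\cH_1)$ remainder issue entirely in both cases. The case $\mu=\mu_0$ works the same way, because $w_1\asymp|x|^{\tau_-}$ holds by Theorem \ref{pr fun-fw}$(b)$.

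\textbf{Step 2 (scaling).} On $\Z^d$ we have $U\asymp(1+|x|)^{\tau_-}$. With only the upper bound $W\le c|x|^\theta$ (the (\ref{pt r2+})-type bound, plus boundedness on finite sets), we get
$$W(lU)^p\le c\,l^p(1+|x|)^{\tau_1-2}\le c\,l^p(1+|x|)^{\tau_2-2},\qquad \cL_\mu(lU)\ge c' l(1+|x|)^{\tau_2-2}.$$
Inside $B_{n_0}$ we instead compare against $c'l$. For $p>1$ choose $l$ small; for $p<1$ choose $l$ large; for $p=1$ the factor $(1+|x|)^{\tau_1-\tau_2}$ gives the smallness at infinity, and on the finite set we enlarge $t_1$. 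This yields $(i)$: $lU$ solves (\ref{eq 1.1-ext}) with $lU\asymp|x|^{\tau_-}$.

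\textbf{Step 3 ($(ii)$, equality when $q=p<1$).} Using (\ref{pt r1-l}), I take as subsolution $l_1w_1$ with $l_1$ small. Indeed $\cL_\mu(l_1w_1)=l_1 1_{B_{n_0}}$, while $W(l_1w_1)^p\gtrsim l_1^p\,(\cdots)$ on $B_{n_0}$ is larger for $l_1$ small because $p<1$, provided $W>0$ there. The supersolution is $l_2U$ from Step 2, and we arrange $l_1w_1\le l_2U$. Then monotone iteration on the balls $B_n$ (solving $\cL_\mu u_{k+1}=Wu_k^p$ via Proposition \ref{pr 3.1}) gives a solution trapped between them, hence satisfying (\ref{es-cri-4}). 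The main obstacle is this iteration step: the map $u\mapsto Wu^p$ is increasing, and the minimal-solution representation from Proposition \ref{pr 3.1} preserves order, so the iteration starting from the subsolution increases and stays below $l_2U$ by Corollary \ref{cr comp}. Passing to the pointwise limit is justified, as in the proof of Theorem \ref{pr fun-fw}, because each equation involves only finitely many neighbours.
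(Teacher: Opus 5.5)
\textbf{The step that fails is $p=1$.} The hypothesis $2+\theta+p\tau_-<\tau_-$ allows $p=1$ whenever $\theta<-2$. In that case the inequality $\cL_\mu U\ge WU$ is homogeneous in $U$, so scaling by $l$ gives nothing.

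Enlarging $t_1$ does not help either. It adds $t_1 1_{B_{n_0}}$ to $\cL_\mu U$ but adds $t_1Ww_1$ to $WU$. On $B_{n_0}$ the requirement therefore tends to $Ww_1\le 1$, which need not hold. On the annulus $n_0\le|x|\le R$, $\cL_\mu U$ receives nothing from $t_1$, so enlarging it makes matters strictly worse.

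In fact no argument can work, because the hypotheses only constrain $W$ at infinity. Take $W(0)>2d+\mu H_0(0)$. Suppose $u\gneqq0$ satisfies $\cL_\mu u\ge Wu$. At any zero $x_0$ one gets $-\sum_{y\sim x_0}u(y)\ge0$, so zeros propagate to neighbours and hence $u>0$. Evaluating at $0$ then gives $\sum_{y\sim0}u(y)\le\big(2d+\mu H_0(0)-W(0)\big)u(0)<0$, a contradiction. The paper's proof treats only $p\in(0,1)$ and $p>1$. You must exclude $p=1$ or add a smallness condition on $W$.

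\textbf{For $p\neq1$ your argument is essentially the paper's.} The paper uses the supersolution $t_pw_1-(1+|x|)^{\tau_p}$ with the exact exponent $\tau_p=\tau_1$, rescales it by $l$, and for $p<1$ traps a solution between $l_1W_p$ and $l_2W_p$. Your slack exponent $\tau_2\in(\tau_1,\tau_-)$ and your subsolution $l_1w_1$ (which needs only $W>0$ on $B_{n_0}$) are harmless variants. You are also right that the supersolution needs the upper bound \eqref{pt r2+}, not $(\cW_\theta)$ as written in the statement.

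Two small repairs are needed.
\begin{itemize}
\item In Step 3, the bound $u_{k+1}\le l_2U$ should come from minimality: compare the ball approximations with $l_2U$ via Lemma \ref{com lm}. Corollary \ref{cr comp} does not give it, since you do not control its condition at infinity a priori.
\item Under $(\cH_0)$ alone, Theorem \ref{pr fun-fw}$(a)$ gives only $C_\vartheta^{-1}|x|^{\tau_--\vartheta}\le\Phi_{d,\mu}\le C_\vartheta|x|^{\tau_-+\vartheta}$. So $w_1\asymp(1+|x|)^{\tau_-}$, which the paper also assumes, is justified only under $(\cH_1)$. Your slack still gives existence if you choose $\vartheta$ with $\tau_1+p\vartheta<\tau_2<\tau_--\vartheta$, but it does not give the sharp asymptotics \eqref{es-cri-4}.
\end{itemize}
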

{\bf Proof. } 
Let
$$ v_{\tau_p}(x)=(1+|x|)^{\tau_p}\quad {\rm for}\ \, x\in \Z^d,  $$
where 
$$\tau_p=2+\theta +p\tau_-<\tau_-.
$$ 
Moreover,  $\beta_0(\tau_p)<0$, then it infers by  (\ref{est-1})  that 
there exists $r_0>0$   such that 
$$2\beta_0(\tau_p)  |x| ^{ \tau_p -2} \leq \cL_\mu  v_{\tau_p}(x)\leq \frac12\beta_0(\tau_p)   |x| ^{ \tau_p -2}\quad {\rm for}\ |x|>r_0   $$
and for some $c_1>0$
$$  \Big| \cL_\mu  v_{\tau_p}(x)\Big| \leq  c_1  \quad {\rm for}\ |x|\leq r_0.   $$
Moreover, there exists $c_2>1$ such that 
$$\frac1{c_2}(1+|x|)^{\tau_-}\leq  w_1(x)\leq c_2(1+|x|)^{\tau_-},\ \forall\, x\in\Z^d. $$

Let 
$$W_p(x)=t_p w_1(x)-  v_{\tau_p}(x) \quad {\rm in}\ \, \Z^d,$$
where  $t_p= \max\{2c_2, c_1+1\}$. 
Then   for some $c>1$
$$
\frac1{c} (1+ |x| )^{\tau_p -2}  \leq \cL_\mu W_p(x)\leq   c  (1+ |x| )^{\tau_p -2} ,\ \forall\, x\in\Z^d.  $$
By    (\ref{pt r2}) and (\ref{pt r1-l})  there exists $c_1>1$ such that 
$$\frac1{c_1}(1+ |x| )^{\theta +p\tau_-   } \leq WW_p^p(x)\leq c_1(1+ |x| )^{\theta +p\tau_-  },\ \forall\, x\in\Z^d,$$
where $\theta +p\tau_- =\tau_p-2$. With  only assumption (\ref{pt r2}),  the second inequality holds true.  

Since $p\in(0,1)$, then there exist $l_1\in(0,1)$ and $l_2>1$ such that 
$$\cL_\mu (l_1W_p) \leq W(l_1W_p)^{p } \quad {\rm in}\ \, \Z^d$$
and 
$$\cL_\mu (l_2W_p) \geq W(l_2W_p)^{p }\quad {\rm in}\ \, \Z^d.$$
As a consequence, we can obtain a solution $\tilde u_p$ of (\ref{eq 1.1-ext-ex})  such that 
$$l_1W_p \leq \tilde u_p\leq l_2W_p\quad {\rm in}\ \, \Z^d. $$
which   also verifies  (\ref{eq 1.1-ext}). 

When $p>1$,    
then there exist $l_1\in(0,1)$   such that 
$$\cL_\mu (l_1W_p) \geq W(l_1W_p)^{p } \quad {\rm in}\ \, \Z^d. $$ 
As a consequence,   $l_1W_p$ verifies (\ref{eq 1.1-ext}).    \hfill$\Box$ \medskip

\noindent {\bf Proof of Theorem \ref{teo cri ex 1} and Corollary \ref{teo cri ex 2}. }    
When $\mu \geq \mu_0$, $p > \max\{0, p^*_{\mu,\theta}\}$ implies that $2 + \theta + p\tau_- < \tau_-$, and it follows from Proposition \ref{pr 3-ex} that (\ref{eq 1.1-ext}) admits a solution. Moreover, if $p < 1$, (\ref{eq 1.1-ext}) has a solution satisfying (\ref{es-be-1}). Therefore, Part $(A)$ of Theorem \ref{teo cri ex 1} and Part $(A')$ of Corollary \ref{teo cri ex 2} are established.
\smallskip

Let $\mu \in (\mu_0, 0)$ and $\tau_- - 2 < \theta < -2$. Then $-\frac{2+\theta}{q-1} \in (\tau_-, \tau_+)$ implies that $p \in (p^\#_{\mu,\theta}, p^*_{\mu,\theta})$. Hence, for $p \in (\max\{0, p^\#_{\mu,\theta}\}, p^*_{\mu,\theta})$, the existence of solutions follows from Proposition \ref{pr 2-ex}, and the case $p = p^*_{\mu,\theta}$ follows from Proposition \ref{pr 1-ex}. Therefore, Part $(B1)$ of Theorem \ref{teo cri ex 1} and Part $(B')$ of Corollary \ref{teo cri ex 2} are verified.

\smallskip

Let $\mu \in (\mu_0, 0)$ and $\theta > -2$. Then $p^*_{\mu,\theta} < p < p^\#_{\mu,\theta}$, and problem (\ref{eq 1.1-ext}) admits a positive classical solution $u_1$ satisfying (\ref{es-be-2}), which confirms Part $(B2)$ of Theorem \ref{teo cri ex 1}.
\smallskip

Let $\mu > 0$ and $\theta < -2$. Then $-\frac{2+\theta}{q-1} \in (\tau_-, 0)$ and $-\frac{2+\theta}{q-1} \in (0, \tau_+)$ imply that $p \in (p^*_{\mu,\theta}, 1)$ and $p \in (1, p^\#_{\mu,\theta})$, respectively. Thus, for $p \in (\max\{0, p^*_{\mu,\theta}\}, p^\#_{\mu,\theta}) \setminus \{1\}$, it follows from Proposition \ref{pr 2-ex} that problem (\ref{eq 1.1-ext}) admits a positive solution $u$ satisfying (\ref{es-be-2}), and for $p \in (\max\{0, p^*_{\mu,\theta}\}, 1)$, problem (\ref{eq 1.1-ext-ex}) admits a positive solution $u$ satisfying (\ref{es-be-2}). Furthermore, if $p = p^*_{\mu,\theta} \in (0,1)$, then problem (\ref{eq 1.1-ext}) admits a positive solution $u$ satisfying (\ref{es-be-3}).

By Proposition \ref{pr 1-ex}, for $p = p^\#_{\mu,\theta} > 1$ and $p = p^*_{\mu,\theta} \in (0,1)$, problem (\ref{eq 1.1-ext}) admits a positive solution $u$ satisfying (\ref{es-be-4}) and (\ref{es-be-3}), respectively. Additionally, for $p = p^*_{\mu,\theta} \in (0,1)$, problem (\ref{eq 1.1-ext-ex}) admits a positive solution $u$ satisfying (\ref{es-be-3}).\smallskip

Let $\mu > 0$ and $\theta > -2$. Then $-\frac{2+\theta}{q-1} \in (\tau_-, 0)$ and $-\frac{2+\theta}{q-1} \in (0, \tau_+)$ imply that $p > p^*_{\mu,\theta}$ or $p \in (0, p^\#_{\mu,\theta})$ when $p^\#_{\mu,\theta} > 0$, respectively. Then for $p > p^*_{\mu,\theta}$, it follows from Proposition \ref{pr 2-ex} that problem (\ref{eq 1.1-ext}) admits a positive solution $u$ satisfying (\ref{es-be-2}). For $p \in (\max\{0, p^\#_{\mu,\theta}\}, 1)$, problem (\ref{eq 1.1-ext-ex}) admits a positive solution $u$ satisfying (\ref{es-be-2}) for $p \in (0, p^\#_{\mu,\theta})$ when $p^\#_{\mu,\theta} \in (0,1)$, provided that $-2 < \theta < \tau_+ - 2$. \hfill$\Box$

\bigskip
\bigskip

\bigskip

{\small

\noindent {\bf  Conflicts of interest:} The authors declare that they have no conflicts of interest regarding this work.

\medskip

\noindent{\small {\bf Acknowledgements:} 
	H. Chen is supported by NNSF of China [Nos.  12361043].
	
	B. Hua is supported by NNSF of China (no.12371056), and by Shanghai Science and Technology Program (No. 22JC1400100).

	}

\end{document}